\def\defi[#1]{{\bf{#1}}}
\newtheorem{thm}{Theorem}[section]
\newtheorem{THM}{Theorem}
\newtheorem{prop}[thm]{Proposition}
\newtheorem{lemma}[thm]{Lemma}
\newtheorem{cor}[thm]{Corollary}
\theoremstyle{definition}
\newtheorem{definition}[thm]{Definition}
\newtheorem{remark}[thm]{Remark}
\newtheorem{example}{Example}[section]
\numberwithin{equation}{section}
\newcommand{\Par}{\mathrm{Par}}
\newcommand{\z}{\boldsymbol{z}}
\newcommand{\Z}{\boldsymbol{Z}}
\newcommand{\bc}{\boldsymbol{c}}
\newcommand{\bC}{\boldsymbol{C}}
\newcommand{\ba}{\boldsymbol{a}}
\newcommand{\bb}{\boldsymbol{b}}
\newcommand{\yt}{r}
\newcommand{\Hig}{\mathrm{Higgs}}
\begin{document}
\title{Flat parabolic vector bundles on elliptic curves}

\author[T. Fassarella]{Thiago Fassarella}
\address{\color{black}Universidade Federal Fluminense, Rua M\'ario Santos Braga S/N, Niter\'oi, RJ, Brasil}
\email{\color{black}tfassarella@id.uff.br}

\author[F. Loray]{Frank Loray}
\address{Univ Rennes 1, CNRS, Institut de Recherche en Math\'ematique de Rennes, IRMAR, UMR 6625, Rennes, France}
\email{frank.loray@univ-rennes1.fr}

\thanks{2010 Mathematics Subject Classification. Primary 34M55; Secondary 14D20, 32G2032G34.
Key words and phrases: logarithmic connection, parabolic structure, elliptic curve, apparent singularities, symplectic structure.
The first author is supported by CNPq, Proc. 234895/2013-6. The second author is supported by CNRS, ANR-16-CE40-0008  Foliage. The authors also thank  Brazilian-French Network in Mathematics and MATH AmSud for support.}

\begin{abstract}
We describe the moduli space of logarithmic rank $2$ connections on elliptic curves
with $2$ poles. 
\end{abstract}

\maketitle

\tableofcontents

\section{Introduction}

In this paper, we investigate the geometry of certain moduli spaces of connections on curves $C$.
We consider pairs $(E,\nabla)$ where $E\to C$ is a rank $2$ vector bundle and $\nabla:E\to E\otimes\Omega^1_C(D)$
is a logarithmic connection with (reduced) polar divisor $D=t_1+\cdots+t_n$. 
Once we have prescribed the base curve $(C,D)$,
the trace connection $(\det(E),{\rm tr}(\nabla))$ and the eigenvalues $\nu=(\nu_1^{\pm},\ldots,\nu_n^{\pm})$ 
of the residual connection matrix at each pole, then we can define the moduli space ${\rm Con}^{\nu}(C,D)$ 
of those pairs $(E,\nabla)$ up to isomorphism. For a generic choice of $\nu$ (compatible with ${\rm tr}(\nabla)$)
all connections $(E,\nabla)$ are irreducible and the moduli ${\rm Con}^{\nu}(C,D)$ can be constructed 
as a GIT quotient (see \cite{Ni}): it is a smooth irreducible quasi-projective variety of dimension $2N$
where $N=3g-3+n$ is the dimension of deformation of the base curve and  $g$ denotes the genus of the curve. Moreover, the variety 
${\rm Con}^{\nu}(C,D)$ admits a holomorphic symplectic structure (see \cite{Boalch}) which turns to be 
algebraic (see \cite{IIS1,Inaba}): there is a rational $2$-form $\omega$ which is regular and having maximal rank $N$
on ${\rm Con}^{\nu}(C,D)$.

It is natural to consider the forgetful map $\pi:(E,\nabla)\mapsto (E,{\bf p})$ which to
a connection associates the underlying parabolic bundle: the parabolic data ${\bf p}=(p_1,\ldots,p_n)$ consists
of the $\nu_i^+$-eigenline $p_i\subset E\vert_{t_i}$ for each pole. The moduli space ${\rm Bun}(C,D)$
of those parabolic bundles admitting a connection is $N$-dimensional and the map $\pi$ above
turns to be Lagrangian, i.e. its fibers are Lagrangian submanifolds. However, ${\rm Bun}(C,D)$
is not a variety, but a non Hausdorff scheme; it is a finite union of projective varieties patched together 
along Zariski open sets. Over the open subset of simple bundles 
(i.e. without automorphisms), the Lagrangian fibration $\pi$ is an affine $\mathbb A^{N}$-bundle
whose linear part is the cotangent bundle $T^*{\rm Bun}(C,D)$, and the symplectic structure
comes from Liouville form. It is mainly this heuristic picture that we want to describe in a particular case.

The picture is very well known in the case $(g,n)=(0,4)$, 
since ${\rm Con}^{\nu}(C,D)$ corresponds to the Okamoto space of initial conditions for Painlev\'e VI equation
in this case (see \cite{IIS}). The case $(0,n)$ has been studied in \cite{AL,Oblezin,LS,KS} and corresponds to  Garnier systems. The case $(1,1)$ has been studied in \cite{Lame}, where it is shown to be equivalent 
to the Painlev\'e $(0,4)$ case with particular exponents, due to hyperellipticity of the curve.
Similarly, the case $(2,0)$ is studied in \cite{ViktoriaFrank} and turns to be the quotient of 
the Garnier case $(0,6)$ by an involution, again by hyperellipticity. The case $(1,2)$ involved in the present paper
is the first one that does not reduce to genus zero case: for generic eigenvalues $\nu$, the hyperelliptic involution
does not preserve the spectral data.

{\bf Results.}
We fix $C$ to be the elliptic curve with affine equation $y^2=x(x-1)(x-\lambda)$, $\lambda\in\mathbb C\setminus\{0,1\}$,
and denote by $w_\infty\in C$ the point at infinity.
In Section \ref{Sec:FlatCriterion}, the number $n=\deg(D)$ of poles and eigenvalues $\nu$ are arbitrary. There, we study 
which parabolic bundles $(E,{\bf p})$ over $(C,D)$ are $\nu$-flat, i.e. admit a connection $\nabla$ with prescribed
trace and exponents, compatible with parabolics. This has been done for a general curve in \cite{Bis}
in the orbifold case (i.e. with rational eigenvalues) and we extend his criterion for general eigenvalues
in Theorem \ref{criterion}. 

\begin{THM}
A parabolic bundle  $(E,\textbf{p})$ over an elliptic curve $(C,D)$ is $\nu$-flat
if, and only if, it satisfies 
$$\text{Fuchs relation:}\ \ \ \deg (E) + \sum_{k=1}^n (\nu_k^{+}+\nu_k^{-})=0$$
and for each decomposition $(E,\textbf{p})=(L',\textbf{p}')\oplus(L'',\textbf{p}'')$, we have 
$$\deg (L) + \sum_{p_k\in L}\nu_k^+ +\sum_{p_k\not\in L}\nu_k^-=0$$
for $L=L'$ and $L''$.
\end{THM}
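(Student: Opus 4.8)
The plan is to prove the two implications separately, with necessity being a direct residue computation and sufficiency resting on a parabolic version of the Atiyah--Weil criterion.

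\emph{Necessity.} Assume $(E,\textbf{p})$ carries a logarithmic connection $\nabla$ with the prescribed trace and exponents. Taking $\det$ and applying the residue theorem to the line bundle $(\det E,\mathrm{tr}\,\nabla)$ gives $\deg(\det E)+\sum_k \mathrm{Res}_{t_k}(\mathrm{tr}\,\nabla)=0$, which is the Fuchs relation since $\mathrm{Res}_{t_k}(\mathrm{tr}\,\nabla)=\nu_k^++\nu_k^-$. For the second relation, suppose $(E,\textbf{p})=(L',\textbf{p}')\oplus(L'',\textbf{p}'')$ and write $\nabla$ in block form relative to $E=L'\oplus L''$. Composing $\nabla|_{L'}$ with the projection $E\to L'$ yields a genuine logarithmic connection $\nabla'$ on $L'$ (the Leibniz rule survives projection). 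At $t_k$ the parabolic line $p_k$ is the $\nu_k^+$-eigenline of $\mathrm{Res}_{t_k}\nabla$ and equals either $L'|_{t_k}$ or $L''|_{t_k}$; a short local computation in triangular normal form shows $\mathrm{Res}_{t_k}\nabla'=\nu_k^+$ when $p_k\in L'$ and $\nu_k^-$ when $p_k\notin L'$. The residue theorem for $(L',\nabla')$ is then exactly the stated relation for $L=L'$, and symmetrically for $L''$.

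\emph{Sufficiency.} Here I would phrase existence of a $\nu$-connection as the vanishing of an obstruction. If nonempty, the space of such connections is a torsor under $H^0(C,\mathrm{SParEnd}(E)\otimes\Omega^1_C(D))$, where $\mathrm{SParEnd}$ denotes the strongly parabolic endomorphisms (those killing the flag at each $t_k$), and nonemptiness is obstructed by a parabolic Atiyah class $a(E,\textbf{p})\in H^1(C,\mathrm{SParEnd}(E)\otimes\Omega^1_C(D))$. The trace map carries $a(E,\textbf{p})$ to the obstruction to a connection on $\det E$ with residues $\nu_k^++\nu_k^-$, which vanishes precisely when the Fuchs relation holds. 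When $(E,\textbf{p})$ is indecomposable, I would invoke parabolic Serre duality to identify $H^1(C,\mathrm{SParEnd}(E)\otimes\Omega^1_C(D))^\vee$ with $H^0(C,\mathrm{ParEnd}(E,\textbf{p}))$; Krull--Schmidt makes the endomorphism ring local, so $H^0(\mathrm{ParEnd}(E,\textbf{p}))=\mathbb{C}\cdot\mathrm{id}\oplus(\text{nilpotents})$. An Atiyah-type argument then shows $a(E,\textbf{p})$, viewed as a functional, pairs trivially with the nilpotent endomorphisms and is therefore determined by its value on $\mathrm{id}$, namely the Fuchs quantity; hence the Fuchs relation alone kills the obstruction.

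This last step---proving that on an indecomposable parabolic bundle the obstruction is controlled entirely by its trace, with the correct treatment of the nilpotent endomorphisms that an elliptic curve permits---is where I expect the real work to lie, and it is precisely the irrational-weight extension of the criterion established for rational weights in \cite{Bis}. Finally, when $(E,\textbf{p})$ is decomposable it splits as $(L',\textbf{p}')\oplus(L'',\textbf{p}'')$ with each $p_k$ lying in one summand, and the two relations for $L=L',L''$ hold by hypothesis. Since a line bundle on $C$ admits a logarithmic connection with arbitrarily prescribed residues subject only to the residue theorem, I would build $\nabla'$ on $L'$ and $\nabla''$ on $L''$ whose residues at $t_k$ are $\nu_k^+$ on the summand containing $p_k$ and $\nu_k^-$ on the other; the relation for $L=L'$ (resp. $L''$) is exactly the solvability condition for $\nabla'$ (resp. $\nabla''$). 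Their direct sum $\nabla'\oplus\nabla''$ is then a block-diagonal $\nu$-connection on $(E,\textbf{p})$, which completes the construction.
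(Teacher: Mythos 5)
Your necessity argument and your treatment of decomposable parabolic bundles are correct and essentially identical to the paper's (Lemma \ref{restriction} and the first half of Section \ref{proof}): block decomposition plus the residue theorem in one direction, direct sums of line-bundle connections in the other. Your cohomological setup for the indecomposable case also matches the paper's Lemma \ref{cohomology}: connections with prescribed residual data form a torsor under strongly parabolic endomorphism-valued logarithmic forms, the obstruction lives in ${\rm H}^1$ of that sheaf, and Serre duality identifies the dual of this ${\rm H}^1$ with the parabolic endomorphisms of $(E,\textbf{p})$. The gap is the step you yourself flag: the claim that the obstruction functional ``pairs trivially with the nilpotent endomorphisms.'' This is not a routine Atiyah-type verification that can be deferred; it is the entire content of the theorem in the cases where it is needed, and those cases genuinely occur on elliptic curves. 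For instance (Remark \ref{remark simple} and Proposition \ref{case indecomposable}), the bundle $E_0$ with all parabolics on the maximal subbundle $\mathcal O_C\hookrightarrow E_0$ is indecomposable with ${\rm End}_0(E_0,\textbf{p})=\mathbb C$ spanned by a nilpotent, so the obstruction space is one-dimensional and dual to exactly the nilpotent directions you wave at. Citing \cite{Bis} does not close this, because Biswas's argument is an orbifold/covering argument that requires rational residues, and removing that hypothesis is precisely what the theorem claims; an honest proof along your lines would have to carry out the parabolic-logarithmic analogue of Atiyah's pairing computation for arbitrary complex $\nu$, which you do not attempt.

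It is worth contrasting with how the paper avoids this computation altogether. Instead of evaluating the obstruction on nilpotents, the paper classifies the offending bundles: Propositions \ref{case decomposable}, \ref{case indecomposable} and \ref{totheorem} show that an indecomposable quasi-parabolic bundle with ${\rm End}_0(E,\textbf{p})\neq\{0\}$ is, up to elementary transformations and twists, the single normal form $E_0$ with all parabolics on $\mathcal O_C\hookrightarrow E_0$. For every other indecomposable bundle ${\rm End}_0(E,\textbf{p})=\{0\}$, so by Serre duality the obstruction group itself vanishes and no pairing argument is needed (Lemma \ref{cohomology}). For the one exceptional normal form, the paper constructs the connection by hand (Lemma \ref{existence sl2}): it realizes $E_0$ as a negative elementary transformation of $\mathcal O_C\oplus\mathcal O_C(t_1)$ and solves explicitly for the entries $\alpha,\beta,\gamma,\delta$ subject to the residue conditions, using that the relevant linear systems on an elliptic curve are base-point free. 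So the two routes differ exactly at the hard point: your plan requires a general pairing identity valid for irrational exponents, which remains unproven in your write-up, whereas the paper substitutes for it a classification special to elliptic curves together with one explicit construction. As written, your proposal does not constitute a complete proof.
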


For generic $\nu$ satisfying Fuchs relation (and $n>0$), the second identity
cannot occur and in that case we have:
$$\nu\text{-flat}\ \ \ \Leftrightarrow\ \ \ \text{indecomposable}.$$

For $n=2$, the moduli space of indecomposable bundles has been recently described in \cite{Nestor}.
It is a non Hausdorff scheme $X$ whose Hausdorff quotient is $\mathbb P^1\times\mathbb P^1$.
More precisely, there is an embedding $C\stackrel{\sim}{\to}\Gamma\subset \mathbb P^1\times\mathbb P^1$ 
as a bidegree $(2,2)$ curve, such that $X$ is the union of two copies $X_{<}$ and $X_{>}$ of $\mathbb P^1\times\mathbb P^1$ 
patched together outside the curve $\Gamma$. For instance, if we fix $\det(E)=\mathcal O_{C}( w_\infty)$,
then $X_{<}$ will correspond to those parabolic bundles $(E,\textbf{p})$ such that $E=E_1$ is the unique non trivial extension 
$$
0 \longrightarrow \mathcal O_C  \longrightarrow E_{1}  \longrightarrow \mathcal O_C(w)  \longrightarrow 0.
$$
Then, the missing $\nu$-flat bundles occurring in $X_{>}$ are those decomposable $E$ with indecomposable parabolic structure. More details are recalled in Section \ref{par}.

We denote by ${\rm Con}^{\nu}_{<}(C,D)$ the open subset of ${\rm Con}^{\nu}(C,D)$ over $X_{<}$.
In Section \ref{sec:universal}, an explicit universal family of connections is given for ${\rm Con}^{\nu}_{<}(C,D)$:
through a birational trivialization $E_1\dashrightarrow \mathcal O_C\times\mathcal O_C$,
it is given by an explicit family of Fuchsian system with $3$ additional apparent singular points.
To set our main result, given a parabolic bundle $(E,\textbf{p})$, let us introduce the parabolic structure $\textbf{p}^-=(p_1^-,p_2^-)$
associated to $\nu_k^-$-eigenspaces, and denote $\textbf{p}^+:=\textbf{p}$. Then, we have a natural map
$$\Par:\ \left\{\begin{matrix}{\rm Con}^{\nu}_{<}(C,D)&\to& X_{<}\times X_{<}\\
(E_1,\nabla)&\mapsto&((E_1,\textbf{p}^+),(E_1,\textbf{p}^-))
\end{matrix}\right.$$
Since $\nu$ is generic, we can assume $\nu_k^+\not=\nu_k^-$ for $k=1,2$, and therefore 
$p_k^+\not= p_k^-$. This implies that the image of $\Par$ in 
$X_{<}\times X_{<}$ avoid the ``incidence variety'' 
$$I:=\{z_1=\zeta_1\}\cup\{z_2=\zeta_2\}\subset \underbrace{(\mathbb P^1_{z_1}\times\mathbb P^1_{z_2})\times(\mathbb P^1_{\zeta_1}\times\mathbb P^1_{\zeta_2})}_{X_{<}\times X_{<}}.$$
Setting $\nu_k:=\nu_k^+-\nu_k^-$, our main result is (see Theorem \ref{isomorphism affine} and Section \ref{Sec:Symplectic})

\begin{THM}\label{MainTheorem}
If $\nu_1\cdot\nu_2\not=0$, then the map
$$\Par:\ {\rm Con}^{\nu}_{<}(C,D)\to X_{<}\times X_{<}$$
induces an isomorphism onto the complement of the incidence variety
$X_{<}\times X_{<}\setminus I$ and the image of the symplectic structure
is given by 
$$\omega=-\frac{1}{2}\left\{\nu_1\frac{dz_1\wedge d\zeta_1}{(z_1-\zeta_1)^2}+\nu_2\frac{dz_2\wedge d\zeta_2}{(z_2-\zeta_2)^2}\right\}.$$
\end{THM}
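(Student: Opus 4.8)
The plan is to prove the two assertions—that $\Par$ is an isomorphism onto $X_<\times X_<\setminus I$, and that it pulls back the symplectic form to the stated $\omega$—somewhat independently, since the first is largely structural while the second is a computation against the explicit universal family.

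For the isomorphism, I would argue as follows. Fix a point $((E_1,\mathbf p^+),(E_1,\mathbf p^-))$ in the complement of $I$; by genericity of $\nu$ this amounts to prescribing, at each pole $t_k$, two distinct lines $p_k^+\neq p_k^-$ in the fibre $E_1\vert_{t_k}$. A logarithmic connection $\nabla$ lying over this data must have $\nu_k^+$-eigenline $p_k^+$ and $\nu_k^-$-eigenline $p_k^-$, so its residue $\mathrm{Res}_{t_k}\nabla$ is completely determined (a $2\times 2$ matrix is fixed by its two eigenvalues together with two distinct eigenlines). The trace and exponents being fixed, the space of connections on $E_1$ with these prescribed residues is a torsor under $H^0(C,\mathrm{End}^0(E_1)\otimes\Omega^1_C(D))$, the holomorphic Higgs fields; I would compute its dimension via Riemann–Roch and stability of $E_1$ and check it vanishes, so that $\nabla$ is uniquely determined by the parabolic data. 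Flatness is automatic here since the target already lies in ${\rm Con}^\nu_<(C,D)$, which consists of $\nu$-flat bundles by Theorem~A. This gives a bijection on points; I would promote it to an isomorphism of varieties by verifying that the construction is algebraic in families—most cleanly by exhibiting the inverse using the explicit universal family of Fuchsian systems from Section~\ref{sec:universal}, reading off the parabolic lines as rational functions of the accessory parameters and inverting. The fact that both source and target are smooth of the same dimension $2N=4$ means it suffices to check $\Par$ is injective with surjective differential, or simply injective and dominant, to conclude.

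For the symplectic computation I would work entirely in the chart provided by the universal family. There $E_1$ is birationally trivialized, the connection becomes an explicit Fuchsian system, and the two parabolic directions $p_k^+,p_k^-$ are recorded by coordinates $z_k,\zeta_k$ on the two factors of $X_<$. The strategy is to express the canonical $2$-form $\omega$ on ${\rm Con}^\nu_<(C,D)$—which in the Fuchsian/apparent-singularity presentation is the standard Darboux form in the apparent singular points and their dual residues (as in \cite{IIS1,Inaba})—and then change coordinates to $(z_1,\zeta_1,z_2,\zeta_2)$. Because $z_k$ and $\zeta_k$ are the eigenlines of a single residue matrix, their relation to the accessory parameters is explicit and rational, and the pullback should organize into a sum of the two one-pole contributions, each of the shape $\nu_k\,dz_k\wedge d\zeta_k/(z_k-\zeta_k)^2$. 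The factor $(z_k-\zeta_k)^{-2}$ is exactly what one expects from a cross-ratio/residue-pairing computation on $\mathbb P^1$, and the weight $\nu_k=\nu_k^+-\nu_k^-$ enters as the difference of exponents governing that residue.

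The main obstacle I anticipate is the symplectic identity rather than the isomorphism, and within it the bookkeeping of the trivialization. The form $\omega$ is intrinsic, but the coordinates $z_k,\zeta_k$ live on $\mathbb P^1\times\mathbb P^1$ via the embedding $\Gamma\hookrightarrow\mathbb P^1\times\mathbb P^1$ and are tied to the specific birational trivialization $E_1\dashrightarrow\mathcal O_C\times\mathcal O_C$; keeping track of how the three apparent singularities and the Liouville form transform under this trivialization—and confirming that the cross-terms $dz_1\wedge d\zeta_2$ etc.\ cancel and that no contribution from the apparent singularities survives—is where the real care is needed. I would manage this by first checking the pole/order structure of the pullback along $I$ (it must have a double pole there and nowhere else, matching the right-hand side), then pin down the two constants by a residue computation at a convenient point, for instance by specializing to a configuration where one pole is ``frozen'' and comparing with the known $(1,1)$ or genus-zero normalizations cited in the introduction.
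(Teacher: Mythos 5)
Your treatment of the isomorphism splits into a structural argument plus a fallback, and only the fallback is complete. The structural part proves uniqueness but not existence: the set of connections on $E_1$ with trace $\zeta$ and fully prescribed residue matrices is a torsor under ${\rm H}^0(C,{\mathcal End}_0(E_1)\otimes\Omega^1_C)$ (note: not $\Omega^1_C(D)$ --- prescribing the residues kills the logarithmic part), and this space does vanish since $E_1$ is simple and $\Omega^1_C\simeq\mathcal O_C$; but a torsor under the zero group may be empty. Theorem~A only guarantees some connection whose $\nu^+$-eigenlines are $\textbf{p}^+$; it says nothing about realizing a prescribed $\textbf{p}^-$ as the $\nu^-$-eigenlines, so surjectivity of $\Par$ onto $X_<\times X_<\setminus I$ is left open (injective plus dominant only gives an open immersion, and your bijection-on-points argument does not give dominance either; one could rescue it abstractly by Ax--Grothendieck applied to the fiberwise injection $\mathbb C^2\to\mathbb C^2$, but you do not invoke this). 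Your fallback --- reading off the eigenlines on the universal family and inverting --- is precisely the paper's proof of Theorem \ref{isomorphism affine}: from Table \ref{basis} one computes, for $\nabla_{\bc}$ as in (\ref{FormuleNablac}), that $p_k^+=(1:z_k)$ and $p_k^-=(c_k:c_kz_k-\tfrac{\nu_k}{2})$, so $c_k=\nu_k/(2(z_k-\zeta_k))$; this single formula gives injectivity and surjectivity at once.

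The symplectic half, which you yourself flag as the hard part, has a genuine gap. Your starting point --- that $\omega$ is ``the standard Darboux form in the apparent singular points and their dual residues'' as in \cite{IIS1,Inaba} --- is a genus-zero statement; nothing of the sort is established for elliptic curves in this paper or in the references, and the paper's closing proposition (the formula $\omega=d\eta$ in the $App\times Bun'$ coordinates) is deduced as a \emph{consequence} of the very formula you are trying to prove, so building the argument on apparent-singularity coordinates risks circularity. What your proposal misses is the paper's key observation (Remark \ref{rk:characNabla0}): the explicit section $\nabla^0$ of $Bun^+$ is a fiber of $Bun^-$, hence Lagrangian, because the fibers of $Bun^{\pm}$ are Lagrangian by \cite{Simpson}. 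Therefore the reduction map $\nabla_{\bc}\mapsto\nabla_{\bc}-\nabla^0$ is symplectic, and combined with the identification $\Theta_i^0\mapsto dz_i$ of Theorem \ref{main1} it shows that $(z_1,z_2,c_1,c_2)$ are Darboux coordinates, $\omega=dc_1\wedge dz_1+dc_2\wedge dz_2$. The claimed formula is then the one-line substitution $c_k=\nu_k/(2(z_k-\zeta_k))$. By contrast, your plan of matching the double pole along $I$ and pinning down the two constants by residues is a consistency check rather than a derivation: without an a priori normal form for the pullback you cannot conclude that it is determined by its polar behaviour.
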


In the spirit of classical Torelli Theorem, this shows that exponents (difference of eigenvalues) 
can be read off from the moduli space, see Proposition \ref{prop:Torelli}. This completes the result of \cite[Theorem B]{Nestor}
where it is shown that the moduli space of $\nu$-flat bundles keeps track of the punctured curve $(C,D)$.

We investigate in Section \ref{sec:WholeModuli} how to cover the full moduli space ${\rm Con}^{\nu}(C,D)$
by three charts like the one in Theorem \ref{MainTheorem}, see Theorem \ref{main2}. Finally, in Section \ref{sec:Apparent} 
(see also Section \ref{Sec:Symplectic}), we study the ``apparent map'' which, to a connection $(E,\nabla)\in{\rm Con}^{\nu}_{<}(C,D)$,
 associates the position of the apparent singular points of the corresponding scalar equation via the ``cyclic
vector'' $\mathcal O_C\subset E_1$ (see \cite{IIS1,LS} for instance). This gives us a map
$$App\ :\ {\rm Con}^{\nu}_{<}(C,D)\to  |\mathcal O_C(w_{\infty}+t_1+t_2)|\simeq \mathbb P^2$$
which turns out to be Lagrangian. Similarly as \cite[Theorem 1.1]{LS} in genus zero case, we have 

\begin{THM}If $\nu_1+\nu_2+1\neq 0$, then the rational map 
$$Bun\times App\ :\ {\rm  {Con}}_<^{\nu}(C,D) \longrightarrow (\mathbb P^1_{z_1}\times \mathbb P^1_{z_2})\times \mathbb P^2$$
is birational. More precisely, the restriction
$$App\ :\ Bun^{-1}(\textbf{p})\longrightarrow \mathbb P^2$$
is injective if, and only if, $\textbf{p}\in X_{<}\simeq \mathbb P^1_{z_1}\times \mathbb P^1_{z_2}$ is lying outside 
 $\{z_1=t\}\cup\{z_2=t\}$, where $t$ is the first coordinate of $t_1$.
\end{THM}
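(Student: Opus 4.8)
The plan is to prove the refined fibrewise statement first and then deduce birationality of $Bun\times App$ by a dimension count. The starting point is to make the apparent map intrinsic. Using the cyclic subbundle $\mathcal O_C\subset E_1$, with quotient $E_1/\mathcal O_C\cong\mathcal O_C(w_\infty)$, I would identify the apparent singular points of the scalar equation attached to $\nabla$ with the zero divisor of the \emph{second fundamental form}
\[
\psi_\nabla:\ \mathcal O_C\ \hookrightarrow\ E_1\ \xrightarrow{\ \nabla\ }\ E_1\otimes\Omega^1_C(D)\ \twoheadrightarrow\ (E_1/\mathcal O_C)\otimes\Omega^1_C(D),
\]
which is a global section of $\mathcal O_C(w_\infty+t_1+t_2)$ (recall $\Omega^1_C\cong\mathcal O_C$ and $D=t_1+t_2$). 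Since $h^0(\mathcal O_C(w_\infty+t_1+t_2))=3$, its divisor is exactly a point of $|\mathcal O_C(w_\infty+t_1+t_2)|\cong\mathbb P^2$, and $App(\nabla)=[\psi_\nabla]$ in $\mathbb P^2$.

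Next I would exploit the affine structure of the fibre. Over the simple locus $Bun^{-1}(\textbf{p})$ is a torsor under the space of strongly parabolic Higgs fields $\theta$, and $\psi_{\nabla+\theta}=\psi_\nabla+\psi_\theta$, so $\theta\mapsto\psi_\theta$ is linear. Writing $V(\textbf{p})\subset H^0(\mathcal O_C(w_\infty+t_1+t_2))$ for its image, $App|_{Bun^{-1}(\textbf{p})}$ is the composition of an affine map into $H^0$ with the projectivization $H^0\setminus\{0\}\to\mathbb P^2$. An elementary computation then shows this composition is injective if and only if $\theta\mapsto\psi_\theta$ is injective and $\psi_\nabla\notin V(\textbf{p})$, both conditions being independent of the chosen basepoint $\nabla$ in the fibre. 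The injectivity of $\theta\mapsto\psi_\theta$ amounts to the absence of a nonzero strongly parabolic Higgs field preserving $\mathcal O_C$; a residue analysis forces such a field to be holomorphic and trace-free, hence to lie in $H^0(\mathrm{End}_0(E_1))=0$ because $E_1$ is a simple bundle. Thus $V(\textbf{p})$ is a plane (a hyperplane in $H^0$) for every $\textbf{p}$.

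It remains to locate the bad locus $\{\psi_\nabla\in V(\textbf{p})\}$. The key local observation is at the poles: the value of $\psi_\nabla$ at $t_k$ is the $(2,1)$-entry of $\mathrm{Res}_{t_k}\nabla$ in an $\mathcal O_C$-adapted frame, which vanishes exactly when $\mathcal O_C|_{t_k}$ is an eigenline of $\mathrm{Res}_{t_k}\nabla$; likewise $\psi_\theta(t_k)$ vanishes for \emph{all} strongly parabolic $\theta$ exactly when $\mathcal O_C|_{t_k}$ equals the parabolic $p_k$, the nilpotent direction of $\mathrm{Res}_{t_k}\theta$. When $\mathcal O_C|_{t_k}=p_k=p_k^+$ both phenomena occur, so $V(\textbf{p})$ and $\psi_\nabla$ both lie in the plane $H^0(\mathcal O_C(w_\infty+t_1+t_2)-t_k)$ of sections vanishing at $t_k$; as this plane is two-dimensional it equals $V(\textbf{p})$, whence $\psi_\nabla\in V(\textbf{p})$. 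Using the coordinates of Section \ref{par}, the condition $\mathcal O_C|_{t_k}=p_k$ reads $z_k=t$ with $t$ the first coordinate of $t_1$, giving the two components $\{z_1=t\}$ and $\{z_2=t\}$. For the converse I would express $\psi_\nabla\in V(\textbf{p})$ as the vanishing of a single obstruction, namely the image of $\psi_\nabla$ in the one-dimensional cokernel of $\theta\mapsto\psi_\theta$, computed as a residue pairing; evaluating it on the explicit Fuchsian family of Section \ref{sec:universal}, I expect it to equal a nonzero multiple of $(z_1-t)(z_2-t)$, the multiplicative constant being a nonzero scalar precisely when $\nu_1+\nu_2+1\neq0$. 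I expect this obstruction computation, and the verification that it does not vanish identically, to be the main obstacle: it is exactly the point at which the hypothesis $\nu_1+\nu_2+1\neq0$ is used, in the same spirit as the non-resonance in \cite[Theorem 1.1]{LS}.

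Finally I would assemble the global statement. For $\textbf{p}$ outside $\{z_1=t\}\cup\{z_2=t\}$ the map $App:Bun^{-1}(\textbf{p})\to\mathbb P^2$ is injective, hence has two-dimensional (therefore dense) image; since $Bun$ already separates fibres, $Bun\times App$ is generically injective and dominant. As both source and target have dimension $4$ (here $2N=4$ and $\dim\big((\mathbb P^1_{z_1}\times\mathbb P^1_{z_2})\times\mathbb P^2\big)=4$), generic injectivity together with dominance yields that $Bun\times App$ is birational, completing the proof.
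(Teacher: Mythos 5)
Your reduction of the fibrewise statement to two conditions---injectivity of the linear map $\theta\mapsto\psi_\theta$ and $\psi_\nabla\notin V(\textbf{p})$---is correct, and it is essentially the right frame. But your proof of the first condition contains a genuine error, and it is not a peripheral one: the claim that ``a residue analysis forces [a trace-free Higgs field with residues killing $p_k$ and preserving $\mathcal O_C$] to be holomorphic'', hence that $V(\textbf{p})$ is a plane \emph{for every} $\textbf{p}$, is false. In an $\mathcal O_C$-adapted local frame the residue of such a field at $t_k$ is upper triangular, $\begin{pmatrix}\alpha&\beta\\ 0&-\alpha\end{pmatrix}$; requiring it to kill $p_k$ forces it to vanish only when $p_k\neq\mathcal O_C|_{t_k}$. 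When $p_k=\mathcal O_C|_{t_k}$ --- which in the coordinates of Section \ref{par} is exactly the condition $z_k=t$ --- a nonzero nilpotent residue $\begin{pmatrix}0&\beta\\ 0&0\end{pmatrix}$ is allowed, and a nonzero parabolic Higgs field preserving $\mathcal O_C$ really does exist: in the universal family of Table \ref{basis}, at $z_1=t$ the basis element $\Theta_1^0(\z)$ satisfies $\psi_{\Theta_1^0}=0$ (its entire column in the matrix of $App_{\z}$ vanishes). So on $\{z_1=t\}\cup\{z_2=t\}$ the linear part drops rank and $V(\textbf{p})$ is a line, not a plane; the failure of your injectivity claim is precisely what cuts out the bad locus of the theorem.

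This has two consequences. For the ``only if'' direction, your mechanism (that $V(\textbf{p})$ equals the two-dimensional space $H^0(\mathcal O_C(w_\infty+t_1+t_2-t_k))$ and swallows $\psi_\nabla$) is internally inconsistent with the actual geometry; the conclusion is still true, but for the simpler reason that translating $\nabla$ by a nonzero element of $\ker(\theta\mapsto\psi_\theta)$ does not change $App(\nabla)$. For the ``if'' direction --- and hence for birationality, which rests on it --- everything reduces to your deferred ``obstruction computation'' that $\psi_{\nabla}\notin V(\textbf{p})$ when $z_1,z_2\neq t$, the only place where $\nu_1+\nu_2+1\neq0$ can enter; you say you ``expect'' it to work out but do not do it. That computation \emph{is} the paper's proof of Theorem \ref{thm:apparentbirat}: using the basis $\{\nabla^0,\Theta_1^0,\Theta_2^0\}$ of Section \ref{sec:universal}, the map $App_{\z}:\mathbb P^2_{\bc}\to\mathbb P^2_{\ba}$ is projective-linear with
$$
\det(App_{\z})=-32\,r^2(t-z_1)^2(t-z_2)^2(\nu_1+\nu_2+1),
$$
together with the analogous formula on the chart $U_\infty$; all four assertions of the theorem (isomorphism on fibres over $V$, rank-two and rank-one degenerations on the two lines and at $u_t$, and birationality) are read off from this matrix. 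As written, your proposal establishes neither direction of the equivalence without that explicit input.
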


{\bf Notation and conventions.} Curves are always assumed to be irreducible and defined over the field $\mathbb C$ of complex numbers.  Given a projective smooth curve $C$ and a holomorphic vector bundle $E \longrightarrow C$ over $C$  we make no difference in notation between the total space $E$ and its locally free sheaf defined by holomorphic sections. We denote by $E^{*}$ the sheaf $\mathcal Hom_{\mathcal O_C}(E,\mathcal O_C)$. If $E$ and $F$ are two vector bundles over $C$ we denote by ${\rm Hom}(E,F)$ the space of global sections of the sheaf $\mathcal Hom_{\mathcal O_X}(E,F)$. In particular, ${\rm End}(E)={\rm Hom}(E,E)$ denotes the set of endomorphisms of $E$. {\color{black}We use the notation $\mathbb P^1_z$ to denote 
the projective line $\mathbb P^1$ equipped with an affine coordinate $z\in\mathbb C$; this notation will be used to distinguish 
between several occurences of $\mathbb P^1$.}

\section{Existence of logarithmic connections}\label{Sec:FlatCriterion}

In this section, we shall investigate the  existence of logarithmic connections on a given rank 2 quasi-parabolic vector bundle $(E,\textbf{p})$ over an elliptic curve $C$. {\color{black}We give a criterion in Theorem \ref{criterion} which extends the famous Weil criterion 
\cite{Weil} for holomorphic connections, and Biswas criterion \cite{Bis} for logarithmic connections with rational residual 
eigenvalues.}

\subsection{Logarithmic connections}\label{logarithmic connections}

Let us fix a set of $n$ distinct points $\textbf{t}=\{t_1,...,t_n\}$ on a smooth projective complex curve $C$ 
and let $D=t_1+\cdots + t_n$ be  the reduced effective divisor associated to it. A \textit{logarithmic connection} on a vector bundle $E$ over $C$ with polar divisor $D$ is a $\mathbb C$-linear map
$$
\nabla: E \longrightarrow E\otimes \Omega_C^1(D)
$$
satisfying the Leibniz rule
$$
\nabla(f\cdot s)= df \otimes s + f \cdot \nabla (s)
$$
for any local section $s$ of $E$ and function $f$ on $C$. If $E$ is of rank $r$, then it is given by a cocycle $\{G_{ij}\}\in {\rm H}^1(C,GL(r,\mathcal O_{C}))$ defined by an atlas of trivializations $C=\cup U_i$ where $E|_{U_i}\simeq U_i\times \mathbb C^r$. Locally, over each $U_i$, the connection writes $\nabla = d_C + A_i$ where $d_C: \mathcal O_C \longrightarrow \Omega_C^1$ is the differential operator on $C$  and $A_i$ is a $r\times r$ matrix with coefficients in $\Omega_C^1(D)$ that glue together through the transition map. That is, $A_i$ is a $r\times r$ matrix of meromorphic $1$-forms having at most simple poles on $\textbf{t}$ and this collection of matrices must satisfies  
$$
A_j=G_{ij}\cdot A_i \cdot G_{ij}^{-1} + dG_{ij}\cdot G_{ij}^{-1}
$$  
over intersections $U_i\cap U_j$. For each pole $t_k\in U_i$, the residue homomorphism ${\rm Res}_{t_k}(\nabla)={\rm Res}_{t_k}(A_i) \in {\rm End}(E|_{t_k})$ is well defined. If $E$ is of rank $2$, then let $\nu_k^+$ and $\nu_k^-$ be the eigenvalues of   ${\rm Res}_{t_k}(\nabla)$, called \textit{local exponents} of $\nabla$ over $t_k$. The data 
$$
\nu=(\nu_1^{\pm},...,\nu_n^{\pm}) \in \mathbb C^{2n}
$$
is called \textit{local exponent} of $\nabla$.   We note that the connection $\nabla$ induces a logarithmic connection ${\rm tr}(\nabla)$ on the determinant line bundle ${\rm det}(E)$ with
$$
{\rm Res}_{t_k}({\rm tr}(\nabla))=\nu_k^+ + \nu_k^-.
$$
By  Residue Theorem  we obtain the Fuchs relation 
\begin{eqnarray}\label{Fuchs}
d + \sum_{k=1}^n (\nu_k^+ + \nu_k^-)=0.
\end{eqnarray}
where $d={\rm deg}(E)$.

When $E$ admits a holomorphic connection, then it is called \textit{flat}. There is a similar notion for quasi-parabolic vector bundles. We fix $(E, \textbf{\textbf{ p}}=\{p_k\})$ a quasi-parabolic rank $2$ vector bundle over $(C,\textbf{t})$, see Section \ref{def parabolic}. We shall say that  $(E, \textbf{\textbf{ p}})$ is $\nu$-\textit{flat} if it admits a logarithmic connection $\nabla$ with given local exponent $\nu$  satisfying  
$$
{\rm Res}_{t_k}(\nabla)(p_k) = \nu_k^+ \cdot p_k. 
$$
We also say that $(E, \nabla, \textbf{\textbf{ p}})$ is a $\nu$-\textit{parabolic connection}.

The following flatness criterion for vector bundles over curves is due to A. Weil (see \cite{Weil, At}):  a vector bundle is flat if, and only if, it is the direct sum of indecomposable vector bundles of degree zero.  A generalization to the parabolic context of the Weil's criterion was obtained in \cite{Bis} where the local exponents are supposed to be rational numbers. Besides that, when $C=\mathbb P^1$ and $\nu$ is generic
\begin{eqnarray}\label{generic}
\sum_{k=1}^n \nu_k^{\epsilon_k} \notin \mathbb Z \;;\;\;\; \epsilon_k \in \{+,- \}
\end{eqnarray}
the following equivalent conditions are known (cf. \cite{AL}):
\begin{enumerate}
\item $(E, \textbf{\textbf{ p}})$ is $\nu$-\textit{flat};
\item ${\rm End}_0(E, \textbf{\textbf{ p}}) = 0$;
\item $(E, \textbf{\textbf{ p}})$ is indecomposable.
\end{enumerate}

\begin{remark}\label{remark simple}
We note that if $C$ is an elliptic curve then the above conditions $(1)$ and $(2)$ are not equivalents. {\color{black}For instance, let $E_0$ be the unique non trivial extension
$$
0\longrightarrow \mathcal O_C {\longrightarrow}  E_0{\longrightarrow}  \mathcal O_C \longrightarrow 0.
$$}
If $\textbf{p}$ is a parabolic structure with all parabolics lying on $\mathcal O_C \hookrightarrow E_0$, then from Proposition \ref{case indecomposable}:
$$
{\rm End}_0(E_0,\textbf{p}) = \mathbb C.
$$ 
But $(E_0,\textbf{p})$ is indecomposable and, as we will see in Theorem \ref{criterion}, it is $\nu$-flat for any $\nu$ satisfying the Fuchs Relation.
\end{remark}

\subsection{Flatness criterion over elliptic curves}

In this section, we will obtain a parabolic version of Weil's criterion over elliptic curves, in the same spirit of \cite{AL, Bis} (see Theorem \ref{criterion}).

Before that, let us recall the definition of direct summand as well as parabolic degree. Let  $(E,\textbf{p})$ be a rank $2$ quasi-parabolic bundle. We say that  $(L,{\textbf{q}})$ is a \textit{direct summand} of $(E,\textbf{p})$ if either  $(E,\textbf{p})=(L,{\textbf{q}})$ or 
 $L$ is a line bundle and there exists another parabolic line bundle, say $(M,\textbf{r})$, such that 
$$
(E,\textbf{p}) = (L,{\textbf{q}})\oplus (M,\textbf{r}).
$$
Its \textit{parabolic degree}, denoted by $\deg_{\nu} (L,{\textbf{q}})$, with respect to  $\nu=( \nu_1^{\pm},...,\nu_n^{\pm})$ is defined as follows:
\begin{itemize}
\item  if   $(E,\textbf{p})=(L,{\textbf{q}})$ then we set
$$
{\rm deg}_{\nu} (L,{\textbf{q}}) :=  \deg (E) + \sum_{k=1}^n (\nu_k^{+}+\nu_k^{-});
$$
\item and if $(E,\textbf{p}) = (L,{\textbf{q}})\oplus (M,\textbf{r})$  then it is defined by
$$
{\rm deg}_{\nu} (L,{\textbf{q}}) :=  \deg (L) + \sum_{k=1}^n \nu_k^{\epsilon_k}
$$
where $\epsilon_k = +$ if $p_k$ is contained in $L$ and $\epsilon_k=-$ if $p_k$ is contained in $M$. 
\end{itemize}

\begin{thm}\label{criterion}
Given a quasi-parabolic bundle  $(E,\textbf{p})$ over an elliptic curve $C$, the following conditions are equivalents
\begin{enumerate}
\item  $(E,\textbf{p})$ is $\nu$-flat;
\item  every direct summand of $(E,\textbf{p})$ is of parabolic degree zero with respect to $\nu$.
\end{enumerate}
\end{thm}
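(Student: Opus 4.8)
The plan is to prove the two implications separately: $(1)\Rightarrow(2)$ is elementary bookkeeping built on the Residue Theorem, whereas $(2)\Rightarrow(1)$ carries the real content. For $(1)\Rightarrow(2)$, assume $(E,\textbf{p})$ carries a logarithmic connection $\nabla$ with exponents $\nu$ and $\mathrm{Res}_{t_k}(\nabla)(p_k)=\nu_k^+ p_k$. If the direct summand is $(E,\textbf{p})$ itself, then $\deg_\nu(E,\textbf{p})=0$ is exactly the Fuchs relation \eqref{Fuchs} applied to $\mathrm{tr}(\nabla)$ on $\det E$. For a splitting $(E,\textbf{p})=(L,\textbf{q})\oplus(M,\textbf{r})$, I would pass to the induced connection $\nabla_L:=\mathrm{pr}_L\circ\nabla\circ\iota_L$ on the line bundle $L$; since $\mathrm{pr}_L\circ\iota_L=\mathrm{id}_L$ the Leibniz rule survives, so $\nabla_L$ is logarithmic with poles on $D$. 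Reading $\mathrm{Res}_{t_k}(\nabla)$ in the splitting $L|_{t_k}\oplus M|_{t_k}$: if $p_k\subset L|_{t_k}$ then $L|_{t_k}=p_k$ is the $\nu_k^+$-eigenline, so $\mathrm{Res}_{t_k}(\nabla_L)=\nu_k^+$; if $p_k\subset M|_{t_k}$ the residue is triangular in this splitting with $M|_{t_k}=p_k$ its $\nu_k^+$-eigenline, forcing the $L$-diagonal entry to be $\nu_k^-$. Thus $\mathrm{Res}_{t_k}(\nabla_L)=\nu_k^{\epsilon_k}$ with $\epsilon_k$ exactly as in the definition of parabolic degree, and the Residue Theorem on $L$ yields $\deg(L)+\sum_k\nu_k^{\epsilon_k}=0$, i.e. $\deg_\nu(L,\textbf{q})=0$.

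For $(2)\Rightarrow(1)$ I would split into two cases. If $(E,\textbf{p})$ is decomposable, write $(E,\textbf{p})=(L,\textbf{q})\oplus(M,\textbf{r})$; condition (2) gives $\deg_\nu(L,\textbf{q})=\deg_\nu(M,\textbf{r})=0$. For a line bundle, a logarithmic connection with prescribed residues $\{r_k\}$ exists if and only if $\deg+\sum_k r_k=0$ (here the Residue Theorem is also \emph{sufficient}), so I can pick $\nabla_L$ on $L$ with residues $\nu_k^{\epsilon_k}$ and $\nabla_M$ on $M$ with the complementary residues, and set $\nabla=\nabla_L\oplus\nabla_M$; a fibrewise check recovers the eigenvalues $\nu_k^\pm$ with $p_k$ the $\nu_k^+$-eigenline. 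In rank two the only remaining possibility is that $(E,\textbf{p})$ is \emph{indecomposable} as a parabolic bundle, in which case (2) reduces to the single Fuchs relation; this is the essential case.

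In the indecomposable case I would argue by obstruction theory, in the spirit of the proofs of Weil and Biswas. Choosing local logarithmic connections near each $t_k$ with the prescribed residue and eigenline (which always exist), the obstruction to gluing them into a global $\nu$-parabolic connection is a class $\mathrm{at}^\nu(E,\textbf{p})\in H^1(C,\mathcal{N}\otimes\Omega^1_C(D))$, where $\mathcal{N}$ is the sheaf of strongly parabolic endomorphisms, and a connection exists precisely when this class vanishes. Since $C$ is elliptic, Serre duality identifies this $H^1$ with the dual of $H^0(C,\mathcal{E}nd_{\mathrm{par}}(E,\textbf{p}))$, the space of parabolic endomorphisms, the pairing being cup product followed by the trace-residue map. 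I would then show that $\langle\mathrm{at}^\nu(E,\textbf{p}),\phi\rangle$ equals $\deg_\nu$ of the image summand when $\phi$ is an idempotent projecting onto a parabolic direct summand, and equals $\deg_\nu(E,\textbf{p})$ when $\phi=\mathrm{id}$. As $(E,\textbf{p})$ is indecomposable, $\mathcal{E}nd_{\mathrm{par}}$ is a local algebra, so every $\phi=c\cdot\mathrm{id}+N$ with $N$ nilpotent; the identity part pairs to $c\cdot\deg_\nu(E,\textbf{p})=0$ by Fuchs.

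The hard part is to show that the nilpotent part $N$ also pairs to zero against $\mathrm{at}^\nu(E,\textbf{p})$. Remark \ref{remark simple} shows this cannot be sidestepped: an indecomposable parabolic bundle on an elliptic curve may have nonzero $\mathrm{End}_0$ (as for the Atiyah bundle $E_0$), so one genuinely has to control the pairing against nontrivial nilpotent parabolic endomorphisms rather than invoking $\mathcal{E}nd_{\mathrm{par}}=\mathbb{C}\cdot\mathrm{id}$. I expect the resolution to be that $\langle\mathrm{at}^\nu,\phi\rangle$ depends only on $\mathrm{tr}(\phi)$: the image sub-line-bundle of a nilpotent $N$ fails to be a direct summand and contributes no net parabolic degree, so the functional reduces to a multiple of the trace with coefficient $\tfrac12\deg_\nu(E,\textbf{p})=0$. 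Pinning down this trace-reduction, using the explicit description of $\mathrm{End}_0$ from Proposition \ref{case indecomposable} together with Atiyah's classification of rank-two bundles on $C$, is the step I expect to demand the most care.
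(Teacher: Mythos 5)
Your reduction of the theorem is correctly organized, and the easy parts --- $(1)\Rightarrow(2)$ via the induced connection on a summand plus the Residue Theorem, and the decomposable case of $(2)\Rightarrow(1)$ via a diagonal connection --- coincide with the paper's Lemma \ref{restriction} and the first half of Section \ref{proof}. The divergence, and the problem, is in the indecomposable case. There your argument rests on the claim that the obstruction functional $\phi\mapsto\langle \mathrm{at}^\nu(E,\textbf{p}),\phi\rangle$ on parabolic endomorphisms depends only on $\mathrm{tr}(\phi)$, equivalently that it kills every nilpotent parabolic endomorphism $N$. This is not a point you can leave as an expectation: after the $\mathfrak{sl}_2$-normalization, Serre duality identifies the obstruction space with ${\rm End}_0(E,\textbf{p})^*$ (the paper's Lemma \ref{cohomology}), and for an indecomposable parabolic bundle the traceless parabolic endomorphisms are exactly the nilpotent ones; so in the only situation where the cohomological argument does not conclude immediately (Remark \ref{remark simple}: $E_0$ with all parabolics on $\mathcal O_C\hookrightarrow E_0$, where ${\rm End}_0(E_0,\textbf{p})=\mathbb C\cdot N$), the vanishing of the pairing against $N$ \emph{is} the whole theorem. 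Your heuristic --- ``the image sub-line-bundle of a nilpotent $N$ fails to be a direct summand and contributes no net parabolic degree'' --- is not an argument and does not identify the mechanism that makes the pairing vanish, so the proposal has a genuine gap at its central step.

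For the record, the gap can be filled along your lines, but by a different mechanism: compute the pairing with a \v{C}ech cocycle built from local connections \emph{adapted to} the saturated kernel line bundle $K\subset E$ of $N$. At a pole $t_k$ either $p_k=K|_{t_k}$, or $p_k$ is transverse to $K|_{t_k}$ and one may take the residue diagonalizable with eigenvalue $\nu_k^+$ on $p_k$ and $\nu_k^-$ on $K|_{t_k}$; in both cases the local connection can be chosen to preserve $K$ and to satisfy the parabolic residue condition. Then every difference $\theta_{ij}=\nabla_i-\nabla_j$ is triangular with respect to $K$, while $N$ is strictly triangular (it kills $K$ and has image inside $K$), so $\mathrm{tr}(\theta_{ij}N)\equiv 0$ and the pairing vanishes. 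The paper avoids this point entirely by a different route: Proposition \ref{totheorem} shows that, up to elementary transformations and twists, the only indecomposable case with ${\rm End}_0(E,\textbf{p})\neq 0$ is $E_0$ with all parabolics on $\mathcal O_C\hookrightarrow E_0$, and Lemma \ref{existence sl2} then exhibits a connection by hand, realizing $E_0$ as an elementary transformation of $\mathcal O_C\oplus\mathcal O_C(t_1)$ and solving explicitly for the residues. Either completion works; as written, your proposal proves the theorem only when ${\rm End}_0(E,\textbf{p})=0$.
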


The proof will be given in Section \ref{proof}. As a consequence, one obtains the following corollary for generic exponents $\nu$ satisfying the Fuchs Relation 
\begin{eqnarray*}
d + \sum_{k=1}^n (\nu_k^+ + \nu_k^-)=0.
\end{eqnarray*}

\begin{cor}\label{cor criterion}
Assume $\nu_1^{\epsilon_1}+\cdots+\nu_n^{\epsilon_n} \notin \mathbb Z$ for any  $\epsilon_k \in \{+,- \}$. Given a quasi-parabolic bundle  $(E,\textbf{p})$ over an elliptic curve $C$, with $\deg (E)=d$, the following conditions are equivalents
\begin{enumerate}
\item  $(E,\textbf{p})$ is $\nu$-flat;
\item  $(E,\textbf{p})$ is indecomposable.
\end{enumerate}
\end{cor}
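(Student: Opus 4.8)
The plan is to deduce the corollary directly from Theorem \ref{criterion}, using the genericity hypothesis on $\nu$ and the Fuchs relation, treating the two implications separately. The key point to keep in mind throughout is the two-case definition of direct summand: $(E,\textbf{p})$ itself always counts as a direct summand of itself, and if $(E,\textbf{p})$ is indecomposable then it is the \emph{only} one.

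For the implication $(2)\Rightarrow(1)$, I would argue that indecomposability forces the hypothesis of Theorem \ref{criterion} to hold trivially. Indeed, if $(E,\textbf{p})$ is indecomposable, the unique direct summand is $(E,\textbf{p})$, whose parabolic degree is
$$\deg_{\nu}(E,\textbf{p}) = \deg(E) + \sum_{k=1}^n (\nu_k^+ + \nu_k^-),$$
and this vanishes by the Fuchs relation $d + \sum_{k=1}^n(\nu_k^+ + \nu_k^-)=0$. Hence every direct summand of $(E,\textbf{p})$ has parabolic degree zero, and Theorem \ref{criterion} yields that $(E,\textbf{p})$ is $\nu$-flat. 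I note that this direction uses only the Fuchs relation and not the genericity assumption.

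For the implication $(1)\Rightarrow(2)$, I would argue by contraposition. Suppose $(E,\textbf{p})$ is decomposable, say $(E,\textbf{p}) = (L,\textbf{q})\oplus(M,\textbf{r})$ with $L,M$ line bundles. Then $(L,\textbf{q})$ is a direct summand whose parabolic degree is
$$\deg_{\nu}(L,\textbf{q}) = \deg(L) + \sum_{k=1}^n \nu_k^{\epsilon_k},$$
where $\epsilon_k=+$ if $p_k\subset L$ and $\epsilon_k=-$ if $p_k\subset M$. Were this quantity zero, we would obtain $\sum_{k=1}^n \nu_k^{\epsilon_k} = -\deg(L)\in\mathbb Z$, contradicting the assumption $\nu_1^{\epsilon_1}+\cdots+\nu_n^{\epsilon_n}\notin\mathbb Z$ (which holds for this particular choice of signs since it holds for all of them). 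Thus $(L,\textbf{q})$ is a direct summand of nonzero parabolic degree, so by Theorem \ref{criterion} the bundle $(E,\textbf{p})$ is not $\nu$-flat, as desired.

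There is no genuine obstacle here, since all the substance is already contained in Theorem \ref{criterion}; the corollary is purely a matter of combining that criterion with elementary arithmetic. The only point requiring care is the asymmetry between the two directions: the implication $(2)\Rightarrow(1)$ rests purely on applying the Fuchs relation to the whole bundle, whereas $(1)\Rightarrow(2)$ is exactly where the genericity $\sum_k\nu_k^{\epsilon_k}\notin\mathbb Z$ becomes indispensable, as it is precisely what rules out the existence of a proper parabolic line subbundle summand of parabolic degree zero.
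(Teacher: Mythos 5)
Your proof is correct and is exactly the intended derivation: the paper states Corollary \ref{cor criterion} as an immediate consequence of Theorem \ref{criterion}, and your two directions (Fuchs relation killing the parabolic degree of the whole bundle for $(2)\Rightarrow(1)$; genericity $\sum_k\nu_k^{\epsilon_k}\notin\mathbb Z$ ruling out a degree-zero line-bundle summand for $(1)\Rightarrow(2)$) are precisely the arithmetic the authors leave implicit. Nothing is missing, and your remark on which hypothesis each implication uses is accurate.
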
 

Recall that over $\mathbb P^1$, the above conditions $(1)$ and $(2)$ are equivalent to $(E,\textbf{p})$ be simple,
i.e. the only  automorphisms of $E$ preserving parabolics are scalar.  Here, over an elliptic curve it is no more true.  As in Remark \ref{remark simple}, if $\textbf{p}$ is a parabolic structure on $E_0$ with all parabolics lying on $\mathcal O_C \hookrightarrow E_0$ then ${\rm End}_0(E_0,\textbf{p}) = \mathbb C$. Hence, $(E_0,\textbf{p})$ is not simple:
\begin{eqnarray*}
{\rm Aut}(E_0,\textbf{p}) = \left\{
\left(
\begin{array}{ccc} 
a & b  \\
0 & a  \\
\end{array}
\right)
\;;\;\; a\in \mathbb C^*,\;\; b\in\mathbb C
\right\}.
\end{eqnarray*}

\subsubsection{Preliminary lemmas}

The easy part of Theorem \ref{criterion} is $(1)\Rightarrow(2)$. As we shall see in the following lemma, it is a consequence of Fuchs Relation. 

\begin{lemma}\label{restriction}
If $(E,\textbf{p})$ is $\nu$-flat, then every direct summand of $(E,\textbf{p})$ is of parabolic degree zero.
\end{lemma}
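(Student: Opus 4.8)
The plan is to reduce everything to the residue theorem for logarithmic connections on line bundles, the same input that produced the Fuchs relation \eqref{Fuchs}. There are two cases, following the definition of direct summand. If the summand is the whole bundle, $(E,\textbf{p})=(L,\textbf{q})$, then by definition $\deg_{\nu}(L,\textbf{q})=\deg(E)+\sum_{k=1}^n(\nu_k^++\nu_k^-)$, which vanishes by \eqref{Fuchs}; nothing more is needed. So the substance is the case of a proper decomposition $(E,\textbf{p})=(L,\textbf{q})\oplus(M,\textbf{r})$ into parabolic line bundles, where I must show $\deg_{\nu}(L,\textbf{q})=\deg(L)+\sum_k\nu_k^{\epsilon_k}=0$.

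The idea is to push the connection $\nabla$ down to $L$. Writing $\iota_L:L\hookrightarrow E$ and $\pi_L:E\twoheadrightarrow L$ for the inclusion and projection attached to the decomposition, I set $\nabla_L:=\pi_L\circ\nabla\circ\iota_L$. Since $\pi_L\circ\iota_L={\rm id}_L$, the non-tensorial Leibniz term of $\nabla$ survives unchanged under $\pi_L$ while the remaining term is $\mathcal O_C$-linear, so $\nabla_L$ is again a logarithmic connection on the line bundle $L$; as $\iota_L,\pi_L$ are holomorphic, its polar divisor is contained in $D$. The residue theorem for $(L,\nabla_L)$ then gives $\deg(L)+\sum_k{\rm Res}_{t_k}(\nabla_L)=0$, so the whole lemma reduces to the pointwise identity ${\rm Res}_{t_k}(\nabla_L)=\nu_k^{\epsilon_k}$ for every $k$.

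This residue computation is the crux, and it is linear algebra on $E\vert_{t_k}=L\vert_{t_k}\oplus M\vert_{t_k}$. Let $R={\rm Res}_{t_k}(\nabla)$, with eigenvalues $\nu_k^{\pm}$; by $\nu$-flatness the parabolic $p_k$ is its $\nu_k^+$-eigenline, i.e. $R(p_k)=\nu_k^+ p_k$. Because $\iota_L,\pi_L$ carry no poles, ${\rm Res}_{t_k}(\nabla_L)=\pi_L\,R\,\iota_L$ as a scalar on $L\vert_{t_k}$. If $\epsilon_k=+$, that is $p_k\subset L$, then $L\vert_{t_k}=p_k$ is $R$-invariant with eigenvalue $\nu_k^+$, so $\pi_L R\iota_L=\nu_k^+$. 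If $\epsilon_k=-$, that is $p_k\subset M$, then $M\vert_{t_k}=p_k$ is the $\nu_k^+$-eigenline, hence in a basis adapted to the decomposition $R$ is lower triangular with diagonal entries $\pi_L R\iota_L$ and $\nu_k^+$; comparing with the eigenvalues $\{\nu_k^+,\nu_k^-\}$ forces $\pi_L R\iota_L=\nu_k^-$. In either case ${\rm Res}_{t_k}(\nabla_L)=\nu_k^{\epsilon_k}$, and the residue theorem yields $\deg_{\nu}(L,\textbf{q})=0$; the symmetric argument with $M$ in place of $L$ handles the other summand.

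The only delicate points I anticipate are verifying carefully that $\nabla_L$ is a bona fide logarithmic connection with no new poles (the cancellation of the Leibniz term under $\pi_L$), and keeping the eigenvalue bookkeeping straight in the case where $p_k$ lies in the complementary summand. Beyond that, the argument is a direct application of the residue theorem already invoked for \eqref{Fuchs}.
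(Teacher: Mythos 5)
Your proof is correct and is essentially the paper's own argument: the compression $\nabla_L=\pi_L\circ\nabla\circ\iota_L$ is exactly the connection $\nabla_1=d+\alpha_i$ that the paper extracts from the $(1,1)$ entry of the connection matrix in a trivialization adapted to the splitting, and both proofs conclude by computing ${\rm Res}_{t_k}(\nabla_L)=\nu_k^{\epsilon_k}$ (your triangularity argument just spells out what the paper leaves implicit) and applying the residue theorem on $L$. No gap; the approaches coincide, with yours phrased invariantly rather than in cocycle form.
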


\proof
Let us suppose $(E,\textbf{p}) = (L,{\bf{q}}) \oplus  (M,{\bf{r}})$. The bundle $E$ is defined by gluing local charts $U_i\times \mathbb C^2$ with transition matrices 
\begin{eqnarray*}
M_{ij}=\left(
\begin{array}{ccc} 
a_{ij} & 0  \\
0 & b_{ij}  \\
\end{array}
\right)
\end{eqnarray*}
where the subbundles $L$ and $M$ are respectively generated by $e_1$ and $e_2$.

Let $\nabla$ be a connection over $(E,\textbf{p})$ with local exponent $\nu$, given in those charts  by $\nabla = d + A_i$ where 
\begin{eqnarray*}
A_i=\left(
\begin{array}{ccc} 
\alpha_i & \beta_i  \\
\gamma_i & \delta_i  \\
\end{array}
\right)
\end{eqnarray*}

First we note that $\nabla_1:= \nabla|_{L} = d + \alpha_i$ defines a connection over $L$. In fact,  the compatibility conditions for $\nabla$ imply 
$$
\alpha_j = \alpha_i + a_{ij}^{-1}\cdot da_{ij}.
$$
Then under hypothesis of $(E,\textbf{p})$ be decomposable one obtains 
$$
{\rm Res}_{t_k}(\nabla_1) = \nu_k^{\epsilon_k}
$$
where $\epsilon_k = +$ if $p_k$ lies in $L$ and $\epsilon_k = -$ if $p_k$ lies in $M$. The conclusion of the proof follows from Fuchs Relation of $\nabla_1$ over $L$.
\endproof

In order to prove that $(2)$ implies $(1)$ in Theorem \ref{criterion}, let us consider the set ${\rm End}_0(E,\textbf{p})$ of traceless endomorphisms  of $E$ leaving fixed the parabolics. In the context of $\mathfrak {sl}_2$-connections, the vanishing of this set is a sufficient condition to a quasi-parabolic vector bundle be $\nu$-flat. This has been proved in \cite[Proposition 3]{AL} in the case $X=\mathbb P^1$, but the same proof works to the general case. Here  we do the details for the reader's convenience. 

\begin{lemma}\label{cohomology}
Let $C$ be a projective smooth curve of genus $g\ge 0$ and $E$ be a rank $2$ vector bundle over $C$ with trivial determinant bundle. If 
$$
{\rm End}_0(E,\textbf{p}) = \{0\}
$$
then $(E,\textbf{p})$ is $\nu$-flat for any $\nu$ satisfying the Fuchs Relation. 
\end{lemma}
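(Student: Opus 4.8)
The plan is to run the standard obstruction-theoretic argument for existence of connections in its parabolic incarnation, exactly as in \cite[Proposition 3]{AL}. Since $\det E=\mathcal O_C$ has degree zero, the Fuchs Relation forces $\sum_k(\nu_k^{+}+\nu_k^{-})=0$, so by the Residue Theorem there is a logarithmic connection $\nabla_0=d+\eta_0$ on $\mathcal O_C$ with ${\rm Res}_{t_k}(\nabla_0)=\nu_k^{+}+\nu_k^{-}$. I would then look for a logarithmic connection $\nabla$ on $E$ with ${\rm tr}(\nabla)=\nabla_0$, with residual eigenvalues $\nu_k^{\pm}$, and with $p_k$ the $\nu_k^{+}$-eigenline of ${\rm Res}_{t_k}(\nabla)$. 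The point is that, as $U$ ranges over the opens of $C$, such connections form a sheaf of torsors, and the obstruction to a global section lives in a single first cohomology group that we will kill using the hypothesis.

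First I would check local existence. Over an open set avoiding the poles, any trivialization gives $\nabla=d$; near a pole $t_k$, fixing a local coordinate $z$ and a frame adapted to $p_k$, one writes $\nabla=d+A_0\,\frac{dz}{z}$ with $A_0$ a constant matrix having $p_k$ as $\nu_k^{+}$-eigenline and eigenvalues $\nu_k^{\pm}$, and one adjusts the holomorphic part so that the trace agrees with $\nabla_0$. Hence the sheaf $\mathcal T$ of local solutions is nonempty, and it is a torsor under the sheaf $\mathcal N$ of admissible differences. A direct computation shows that the difference $\Phi=\nabla-\nabla'$ of two solutions is an $\mathcal O_C$-linear, trace-free section of ${\rm End}_0(E)\otimes\Omega^1_C(D)$ whose residue at each $t_k$ kills $p_k$ (both residues act by $\nu_k^{+}$ on $p_k$) and is therefore nilpotent with image in $p_k$; that is, $\mathcal N=\mathrm{SPEnd}_0(E,\textbf{p})\otimes\Omega^1_C(D)$, where $\mathrm{SPEnd}_0$ denotes the strongly parabolic (nilpotent) trace-free endomorphisms. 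The obstruction to gluing the local solutions into a global connection is the class of this torsor in $H^1(C,\mathcal N)$.

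Finally I would identify the obstruction space by Serre duality. Using the trace pairing on ${\rm End}_0(E)$ one checks, chart by chart around each $t_k$, that the Serre dual $\mathcal N^{\vee}\otimes\Omega^1_C$ is exactly the sheaf $\mathrm{ParEnd}_0(E,\textbf{p})$ of trace-free endomorphisms preserving the parabolic lines: the condition that $\Phi$ have holomorphic entries except for a simple pole transverse to the parabolic line is dual to the condition that a parabolic endomorphism have vanishing lower-left entry at $t_k$. Consequently $H^1(C,\mathcal N)^{*}\cong H^0\!\bigl(C,\mathrm{ParEnd}_0(E,\textbf{p})\bigr)={\rm End}_0(E,\textbf{p})$, which is zero by hypothesis. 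Thus $H^1(C,\mathcal N)=0$, the obstruction class vanishes automatically, a global $\nabla$ exists, and $(E,\textbf{p})$ is $\nu$-flat.

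The main obstacle is the bookkeeping in the Serre-duality step: one must define the strongly parabolic Higgs sheaf $\mathcal N$ with the correct pole and vanishing orders at the punctures and verify locally, via the trace form, that its dual is precisely $\mathrm{ParEnd}_0(E,\textbf{p})$. Everything else — the Residue-Theorem construction of $\nabla_0$, local existence of connections, and the torsor/obstruction formalism — is routine once the sheaves are correctly set up, and the argument is valid for any genus $g\ge 0$ since it only uses Serre duality on a smooth projective curve.
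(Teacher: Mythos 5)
Your proof is correct and takes essentially the same route as the paper's: local existence of adapted connections, the torsor/obstruction argument placing the obstruction in $H^1$ of the sheaf of strongly parabolic traceless Higgs fields (the paper's $\mathcal E$, which coincides with your $\mathcal N$ since traceless plus killing $p_k$ already forces the residue to be nilpotent with image in $p_k$), and then Serre duality via the trace pairing identifying $H^1(C,\mathcal E)^{*}$ with ${\rm End}_0(E,\textbf{p})$. The only cosmetic difference is that you fix the trace connection $\nabla_0$ on $\mathcal O_C$ directly, whereas the paper first twists to reduce to $\mathfrak{sl}_2$-connections with exponents $(\pm\nu_1,\dots,\pm\nu_n)$; the two normalizations are equivalent.
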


\proof
To give a logarithmic connection $\nabla$ on $(E,\textbf{p})$ with local exponent $\nu$ it is equivalent to give an $\mathfrak {sl}_2$-connection $\nabla \otimes \zeta$ on $(E,\textbf{p})$ for  suitable connection $\zeta$ on $\mathcal O_C$. Then let us assume $\nu = (\pm\nu_1,...,\pm\nu_n)$. 

We must construct a logarithmic connection $\nabla$ over  $(E,\textbf{p})$ with local exponent $\nu$ satisfying
\begin{eqnarray}\label{kernel}
{\rm Res}_{t_k}(\nabla)(p_k) = \nu_k \cdot p_k
\end{eqnarray}
for any $k=1,...,n$.  We can do it locally on $C$. The obstruction to global existence is measured by a certain cohomology group. In fact, let $\nabla_i$ be a connection over $E|_{U_i}$ satisfying the desired condition
for any $k=1,...,n$ such that $t_k \in U_i$ for suitable open sets $U_i\subset C$ covering $C$. Note that $\theta_{ij}=\nabla_i - \nabla_j$ satisfies $ {\rm Res}_{t_k}(\theta_{ij})(p_k)=0$, by (\ref{kernel}). Then the differences $\nabla_i - \nabla_j$ define a cocycle
$$
\{\nabla_i - \nabla_j\} \in {\rm H}^1(C, \mathcal E)
$$
where 
$$
\mathcal E = \{ \theta \in \mathcal End_0 (E)\otimes \Omega_C^1(D) \;; \; {\rm Res}_{t_k}(\theta)(p_k)=0\}.
$$

Hence the global existence is insured by the vanishing of this cohomology group. 
Here ${\mathcal End}_0(E)$ is the sheaf of traceless endomorphisms.
We will prove that there is an isomorphism
$$
{\rm H}^1(C, \mathcal E) \simeq {\rm End}_0(E,\textbf{p})^*.
$$
In order to prove it, let us remark that, from Serre's duality theorem, ${\rm H}^1(C, \mathcal E)$ is dual to ${\rm H}^0(C,\mathcal E^*\otimes \Omega_C^1)$. Let us consider   the $\mathcal O_C$-bilinear symmetric map 
\begin{eqnarray*}
	{\mathcal End}_0(E) \times {\mathcal End}_0(E) &\longrightarrow& \mathcal O_C.\\
		(A,B)                    &\mapsto&      {\rm tr} (A\cdot B)
\end{eqnarray*}
It induces an isomorphism $\psi: {\mathcal End}_0(E) \longrightarrow {\mathcal End}_0(E)^*$. Let $\tilde{\mathcal E}$ be the subsheaf of ${\mathcal End}_0(E)$ defined by 
$$
\tilde{\mathcal E} = \{\theta \in {\mathcal End}_0(E) \;;\;\; \theta (p_k) = 0\}.
$$
We note that $\mathcal E$ coincides with $\tilde{\mathcal E} \otimes \Omega_C^1(D)$. Since the image of $\tilde{\mathcal E}$ by  $\psi$ is $\tilde{\mathcal E}^*$  one obtains
$$
\tilde{\mathcal E}^* \otimes \mathcal O_C(-D) = \{ \alpha \in {\mathcal End}_0(E)  \;;\;\; \alpha (p_k) \subset p_k \}.
$$
This implies that
$$
{\rm H}^0(C,\tilde{\mathcal E}^* \otimes \mathcal O_C(-D)) = {\rm End}_0(E,\textbf{p}).
$$
and we get the desired isomorphism:
$$
{\rm H}^1(C, \mathcal E) \simeq {\rm End}_0(E,\textbf{p})^*.
$$
\endproof

Now we shall give the proof of Theorem \ref{criterion}.

\subsubsection{Proof of Theorem \ref{criterion}}\label{proof}

It follows from Lemma \ref{restriction} that $(1)$ implies $(2)$. Now let  $(E,\textbf{p})$ be a quasi-parabolic bundle satisfying the hypothesis $(2)$ of the statement.  First, let us assume that $(E,\textbf{p})$ is decomposable:
$$
(E,\textbf{p}) = (L_1,{\textbf{p}_1})\oplus (L_2,\textbf{p}_2).
$$
Since $(L_i,{\textbf{p}_i})$ has parabolic degree zero for each $i\in \{1,2\}$, there is a logarithmic connection $\alpha_i$  on $L_i$, satisfying 
$$
{\rm Res}_{t_k}(\alpha_i) = \nu_k^{\epsilon_k}
$$
where $\epsilon_k=+$ if $p_k$ lies in $L_i$ and $\epsilon_k=-$ if $p_k$ does not lie in $L_i$. They define a diagonal connection 
\begin{eqnarray*}
\nabla=\left(
\begin{array}{ccc} 
\alpha_1 & 0  \\
0 & \alpha_2  \\
\end{array}
\right)
\end{eqnarray*}
on $(E,\textbf{p})$ with local exponent $\nu$.

Now let $(E,\textbf{p})$ be indecomposable.
After elementary transformations and twists, one can assume that $E$ has trivial determinant bundle. Besides that, it is enough to show the existence of  an $\mathfrak {sl}_2$-connection on $(E,\textbf{p})$ with local exponent $\nu=(\pm\nu_1,...,\pm\nu_n)$. It follows from Proposition \ref{totheorem} that 
$$
{\rm End}_0(E,\textbf{p}) = \{0\}
$$
unless $E$ equals $E_0$, up to elementary transformations and twists, and all the parabolics lie in the maximal subbundle $\mathcal O_C \hookrightarrow E_0$. Then, from Lemma \ref{cohomology}, to conclude the proof of Theorem \ref{criterion}, it is enough to consider the case $E_0$ with all the parabolics in  $\mathcal O_C \hookrightarrow E_0$. 
We shall do it in the following lemma. 

\begin{lemma}\label{existence sl2}
Let $(E_0,\textbf{p})$ be a quasi-parabolic bundle where all the parabolics lie in the maximal subbundle $\mathcal O_C\hookrightarrow E_0$. Then there exists an $\mathfrak {sl}_2$-connection on $(E_0,\textbf{p})$ with given local exponent $(\pm\nu_1,...,\pm\nu_n)$.
\end{lemma}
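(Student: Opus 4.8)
The plan is to construct the desired $\mathfrak{sl}_2$-connection on $(E_0, \textbf{p})$ explicitly, exploiting the very concrete structure of $E_0$ as the nontrivial self-extension of $\mathcal O_C$. The obstruction computed in Lemma \ref{cohomology} does \emph{not} vanish in this case, since $\mathrm{End}_0(E_0,\textbf{p}) = \mathbb C$ when all parabolics lie on the maximal subbundle $\mathcal O_C \hookrightarrow E_0$; so the cohomological shortcut is unavailable and I must produce a connection by hand. The key observation is that $E_0$ carries a natural filtration $\mathcal O_C \hookrightarrow E_0 \twoheadrightarrow \mathcal O_C$, and I would write connections in an adapted local frame $(e_1, e_2)$ in which $e_1$ generates the destabilizing subbundle $\mathcal O_C$ and all parabolics $p_k$ lie on the line spanned by $e_1$.

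First I would set up local coordinates reflecting the extension class of $E_0$. Since the extension is the unique nontrivial one, its class lives in $\mathrm{Ext}^1(\mathcal O_C, \mathcal O_C) \cong \mathrm H^1(C, \mathcal O_C) \cong \mathbb C$, and I can describe the transition cocycle of $E_0$ as upper-triangular unipotent matrices $\begin{pmatrix} 1 & c_{ij} \\ 0 & 1 \end{pmatrix}$ where $\{c_{ij}\}$ represents a nonzero class in $\mathrm H^1(C,\mathcal O_C)$. In such a frame I seek a connection matrix $A_i = \begin{pmatrix} \alpha_i & \beta_i \\ \gamma_i & \delta_i \end{pmatrix}$ of logarithmic one-forms. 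The trace-free condition forces $\delta_i = -\alpha_i$, and the requirement that the residue at each $t_k$ act on $p_k \subset \langle e_1\rangle$ with eigenvalue $\nu_k$ translates into prescribed residue data for $\alpha_i$ and a vanishing condition on the residue of $\gamma_i$ at the poles (so that $e_1$ is a genuine eigendirection). I would then solve the gluing relation $A_j = M_{ij} A_i M_{ij}^{-1} + dM_{ij}\cdot M_{ij}^{-1}$ componentwise.

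The heart of the argument is realizing that the lower-left entry $\gamma$, which must be a global section of an appropriate line bundle twisted down at the parabolic points, is exactly where the nonzero extension class is needed to absorb the obstruction. Concretely, the diagonal part $\alpha$ is constrained to have residues $\nu_k$ on the line $\langle e_1\rangle$, summing (by the Fuchs relation, since we are in the $\mathfrak{sl}_2$ normalization with trivial determinant) to zero; this guarantees a meromorphic one-form $\alpha$ with the prescribed simple poles exists on the elliptic curve $C$. The gluing condition then determines $\beta$ and $\gamma$ up to solving an inhomogeneous linear problem whose solvability is governed precisely by the pairing of the extension class $\{c_{ij}\}$ against the residue data. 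I expect the main obstacle to be verifying that the off-diagonal equation is solvable: one must show that the would-be obstruction in $\mathrm H^1$ is killed by the nontriviality of the extension, i.e.\ that the cup product or connecting-map image of the extension class exactly cancels the residue contribution. This is where the hypothesis ``all parabolics lie on $\mathcal O_C$'' is essential, since it places the parabolic eigendirections inside the destabilizing subbundle and makes the $\gamma$-component (rather than $\beta$) the free parameter available to adjust.

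Alternatively, and perhaps more cleanly, I would reduce to a rank-one problem: restrict the sought connection to the subbundle $\mathcal O_C = \langle e_1\rangle$, where the parabolic-degree-zero hypothesis (guaranteed by condition $(2)$ of Theorem \ref{criterion}, here encoded in the Fuchs relation with all exponents $+\nu_k$ on $\mathcal O_C$) ensures a logarithmic connection $\alpha$ with the correct residues exists on $\mathcal O_C$. Then I would extend $\alpha$ to a connection on all of $E_0$ by a single cohomological step, building the extension of connections to mirror the extension of bundles; the nontriviality of the bundle extension provides exactly the one-dimensional space of choices needed to complete the construction while respecting the $\mathfrak{sl}_2$ and parabolic constraints. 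Either way, the conclusion is an explicit $\nabla$ with local exponent $(\pm\nu_1, \ldots, \pm\nu_n)$ and residues acting as $\nu_k$ on each $p_k$, which finishes the remaining case of Theorem \ref{criterion}.
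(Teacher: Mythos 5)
Your construction stalls on a wrong use of the Fuchs relation. For an $\mathfrak{sl}_2$-connection with exponents $(\pm\nu_1,\ldots,\pm\nu_n)$ the relation reads $\deg E_0+\sum_k(\nu_k+(-\nu_k))=0$, which is vacuous: it puts \emph{no} constraint on $\sum_k\nu_k$, and in the lemma this sum is in general nonzero. Both places where you invoke it therefore fail. First, there is no meromorphic $1$-form $\alpha$ on $C$ with simple poles only at the $t_k$ and residues $\nu_k$ unless $\sum_k\nu_k=0$ (residue theorem), so the global diagonal part your first route starts from need not exist. Second, in your alternative route, a logarithmic connection on $\mathcal O_C$ with residues $\nu_k$ exists if and only if $\sum_k\nu_k=0$, so one cannot in general build $\nabla$ as an extension of rank-one connections. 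Worse, when $\sum_k\nu_k\neq 0$, \emph{any} solution $\nabla$ must fail to preserve $\mathcal O_C\hookrightarrow E_0$ (otherwise its restriction to $\mathcal O_C$, whose residues are the $\nu_k$ because all parabolics lie on $\mathcal O_C$, would violate the residue theorem); hence the lower-left entry $\gamma$ must be nonzero and the triangular picture underlying both variants is not just unproved but impossible.

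The repair, in your unipotent frame $G_{ij}=\left(\begin{smallmatrix}1&c_{ij}\\ 0&1\end{smallmatrix}\right)$, inverts your bookkeeping. The gluing rule gives $\gamma_j=\gamma_i$, so $\gamma$ is a global section of $\Omega^1_C(D)$ with vanishing residues, i.e. $\gamma=s\,\omega$ for a scalar $s$ and $\omega$ the holomorphic $1$-form; it gives $\alpha_j=\alpha_i+c_{ij}\gamma_i$, so $\alpha$ is \emph{not} a global form once $\gamma\neq 0$, and this $\alpha$-equation --- not the off-diagonal one --- carries the obstruction; finally $\beta$ is unobstructed since $H^1(C,\Omega^1_C(D))\simeq H^0(C,\mathcal O_C(-D))^*=0$. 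Solving $\alpha_j-\alpha_i=s\,c_{ij}\omega$ with $\mathrm{Res}_{t_k}\alpha=\nu_k$ is possible if and only if, in $H^1(C,\Omega^1_C)\simeq\mathbb C$, the residue obstruction (a nonzero multiple of $\sum_k\nu_k$, via the connecting map of $0\to\Omega^1_C\to\Omega^1_C(D)\to\oplus_k\mathbb C_{t_k}\to 0$) equals $s\cdot[\{c_{ij}\}]\cup[\omega]$; since $E_0$ is the \emph{nontrivial} extension, $[\{c_{ij}\}]\cup[\omega]\neq 0$ by Serre duality, and a unique $s$ works. This is the precise form of the cancellation you only ``expect'' to hold, and note it is incompatible with your normalization $\sum_k\nu_k=0$, under which it forces $\gamma\equiv 0$. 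The paper's own proof is genuinely different and avoids this cohomological tuning: it applies a negative elementary transformation turning $E_0$ into the decomposable bundle $\mathcal O_C\oplus\mathcal O_C(t_1)$, builds the connection there from explicit global sections, encodes the eigenvalue condition at $t_1$ in the single scalar equation $\beta'\gamma'=(\nu_1+\lambda^+)(\nu_1-\lambda^+-1)$, and transforms back.
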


\proof
The idea of the proof is the following: $E_0$ can be obtained from $\mathcal O_C\oplus \mathcal O_C(t_1)$ by one negative elementary transformation at a direction $q\subset \mathcal O_C\oplus \mathcal O_C(t_1)|_{t_1}$ which is not contained in $\mathcal O_C|_{t_1}$ neither in $\mathcal O_C(t_{1})|_{t_1}$. The maximal subbundle $\mathcal O_C \hookrightarrow E_0$ corresponds to the maximal subbundle $\mathcal O_C(t_1) \hookrightarrow \mathcal O_C\oplus \mathcal O_C(t_1)$. Then we need to construct a logarithmic connection $\nabla$ on $\mathcal O_C\oplus \mathcal O_C(t_1)$ with poles at $D=t_1+\cdots +t_n$, satisfying the following conditions:
\begin{enumerate}
\item ${\rm Res}_{t_1}(\nabla)$ has $-\nu_1$ and $\nu_1 - 1$ as eigenvalues, where   $-\nu_1$  is associated with the eigenspace $q$ as above;
\item ${\rm Res}_{t_k}(\nabla)$, $k= 2,...,n$, has $\pm \nu_k$ as eigenvalues, where  $\nu_k$ is associated with the eigenspace  $\mathcal O_C(t_{1})|_{t_k}$.
\end{enumerate}

We can assume that the vector bundle $\mathcal O_C\oplus \mathcal O_C(t_1)$ is defined by the cocycle 
\begin{eqnarray*}
G_{ij}=\left(
\begin{array}{ccc} 
1 & 0  \\
0 & a_{ij}  \\
\end{array}
\right)
\end{eqnarray*}
where $\{a_{ij}\}$ defines the line bundle $\mathcal O_C(t_1)$.

Now, to give a logarithmic connection  $\nabla$ on $\mathcal O_C\oplus \mathcal O_C(t_1)$ with poles at $D$ is equivalent to give $\nabla = d+A_i$ in charts $U_i\subset C$ where 
\begin{eqnarray*}
A_i=\left(
\begin{array}{ccc} 
\alpha_i & \beta_i  \\
\gamma_i & \delta_i  \\
\end{array}
\right)
\in {\rm GL}_2(\Omega_{U_i}^1(D))
\end{eqnarray*}
with $\alpha=\{\alpha_i\}$, $\beta=\{\beta_i\}$, $\gamma=\{\gamma_i\}$ and $\delta=\{\delta_i\}$ satisfying the compatibility conditions:
$$
A_i\cdot G_{ij}=G_{ij}\cdot A_j + dG_{ij}\cdot G_{ij}^{-1}
$$
on each intersection $U_i\cap U_j$. Equivalently, 
\begin{displaymath}
\left\{ \begin{array}{ll}
d+\alpha & \textrm{is a connection on $\mathcal O_C$}\\
d + \delta & \textrm{is a connection on $\mathcal O_C(t_1)$}\\
\beta   & \textrm{defines a global section  $\{a_i \beta_i\}\in{\rm H}^0(C,\Omega_C^1(D-t_1))$}\\
\gamma   & \textrm{defines a global section $\{a_i^{-1} \gamma_i\}\in{\rm H}^0(C,\Omega_C^1(D+t_1))$}
\end{array} \right.
\end{displaymath}
where $a_{ij}=\frac{a_i}{a_j}$ is a meromorphic resolution of the cocycle, the vector $e_1$ generates the subbundle $\mathcal O_C$ and $e_2$ generates $\mathcal O_C(t_1)$ on each chart $U_i\subset C$.

In order to find $\alpha$ and $\delta$ we define $\lambda^+$ and $\lambda^-$ as
\begin{eqnarray*}
\lambda^+ &=& \nu_2+\cdots +\nu_n\\  
\lambda^- &=& -\nu_2-\cdots - \nu_n -1.
\end{eqnarray*}
Let $d+\alpha$ be a logarithmic connection on $\mathcal O_C$ satisfying 
$$
{\rm Res}_{t_1}(\alpha)=\lambda^+\;; \;\;{\rm Res}_{t_k}(\alpha)=-\nu_k\;, k\ge 2
$$
and $d+\delta$ be a logarithmic connection on $\mathcal O_C(t_1)$ satisfying  
$$
{\rm Res}_{t_1}(\delta)=\lambda^-\;; \;\;{\rm Res}_{t_k}(\delta)=\nu_k\;, k\ge 2.
$$

It remains to find $\beta$ and $\gamma$ subject to the conditions $(1)$ and $(2)$ above. Since residues of $\alpha$ and $\delta$ have already been chosen,  condition $(2)$  is equivalent to the vanishing of $\beta$ in $t_k$ for $k\ge 2$, that is, $\beta$ is induced by an element
$$
\beta' \in {\rm H}^0(C,\Omega_C^1(D-t_1)\otimes \mathcal O_C(-t_2-\cdots -t_n))= {\rm H}^0(C,\Omega_C^1).
$$
In addition, as $C$ is an elliptic curve, we can assume $\beta'\in \mathbb C$. Then we have one degree of freedom to choose $\beta$ satisfying condition $(2)$, which is determined by its residue at $t_1$, still denoted by $\beta' = {\rm Res}_{t_1}(\beta)$. 

The choice of $\gamma$ is independent of condition $(2)$. For instance for each $k\ge 2$, ${\rm Res}_{t_k}(\gamma)$ determines the eigenvector associated with the eigenvalue $-\nu_k$.   But we have to choose $\gamma$ satisfying condition $(1)$. Given $\gamma \in {\rm H}^0(C,\Omega_C^1(D+t_1))$  with residue $\gamma'$ at $t_1$, one gets
\begin{eqnarray*}
{\rm Res}_{t_1}(\nabla)=\left(
\begin{array}{ccc} 
\lambda^+ & \beta'  \\
\gamma' & \lambda^-  \\
\end{array}
\right).
\end{eqnarray*}
Then saying that $-\nu_1$ and $\nu_1 - 1$ are eigenvalues of ${\rm Res}_{t_1}(\nabla)$ is equivalent to  
$$
\lambda^+ \lambda^- - \beta'\gamma' = (-\nu_1)(\nu_1-1).
$$
We leave the reader to verify that this last equality is equivalent to 
$$
\beta'\gamma'  = (\nu_1+\lambda^+)\cdot (\nu_1-\lambda^+-1).
$$

On one hand, if the term in the right side is non zero, we can find $\beta'\neq 0$ and $\gamma'\neq 0$ satisfying this equality. In fact,  $\gamma'\neq 0$ if and only if $\gamma$ does not vanish at $t_1$. Then we need to find $\gamma\in{\rm H}^0(C,\Omega_C^1(D+t_1))$ which is not contained in  ${\rm H}^0(C,\Omega_C^1(D))$. We can do it, because $C$ is elliptic, hence the linear system which corresponds to ${\rm H}^0(C,\Omega_C^1(D+t_1))$ has no base points. On the other hand, if the term in the right side equals zero, we set 
\begin{displaymath}
\left\{ \begin{array}{ll}
\beta' = 0, \gamma'\neq 0 & \textrm{if $\nu_1+\lambda^+ =0$}\\
\beta'\neq 0, \gamma'=0  & \textrm{if $\nu_1-\lambda^+-1=0$}
\end{array} \right.
\end{displaymath}
Finally, with these choices we see that the eigenspace  associated to $-\nu_1$ is not contained in $\mathcal O_C|_{t_1}$ neither in $\mathcal O_C(t_{1})|_{t_1}$.
\endproof

\subsection{Indecomposable quasi-parabolic bundles} In view of Corollary \ref{cor criterion} it is interesting to characterize degree one indecomposable rank $2$ quasi-parabolic bundles when there is at least one parabolic point.
Fix $\det(E)=\mathcal O_C(w)$, $w\in C$.

\begin{prop}\label{prop decomposable}
Assume $n\ge 1$ and let $L$ be a degree $k$ line bundle over the elliptic curve $C$. 
If the parabolic bundle $(L\oplus L^{-1}(w),\textbf{p})$ is indecomposable then
$$
-n+1 \le 2k \le n+1.
$$
\end{prop}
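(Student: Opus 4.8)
The plan is to prove the contrapositive: if $2k\ge n+1$ or $2k\le -n+1$, then $(L\oplus L^{-1}(w),\textbf{p})$ is decomposable. Writing $M:=L^{-1}(w)$, the involution $L\leftrightarrow M$ replaces $k$ by $1-k$, and under $2k\mapsto 2-2k$ it exchanges the two endpoints $-n+1$ and $n+1$; hence it suffices to treat $2k\ge n+1$, and since $n\ge 1$ this forces $k\ge 1$, so $\deg L=k\ge 1>1-k=\deg M$. I would first pin down all the ways $E=L\oplus M$ splits into line bundles. Since $\deg(M\otimes L^{-1})=1-2k<0$ we have ${\rm Hom}(L,M)={\rm H}^0(M\otimes L^{-1})=0$, so $L$ is the unique line subbundle of $E$ isomorphic to $L$; dually ${\rm Hom}(M,L)={\rm H}^0(L^2(-w))$ has dimension $2k-1>0$. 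By the Atiyah/Krull--Schmidt uniqueness of the decomposition into indecomposables, every splitting of $E$ into two line bundles has the form $E=L\oplus M_\phi$, where $M_\phi$ is the graph $\{(\phi(m),m):m\in M\}$ of some $\phi\in{\rm H}^0(L^2(-w))$.

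Next I would reformulate parabolic decomposability as an interpolation problem. Let $B\subseteq\{1,\dots,n\}$ be the set of indices with $p_i\ne L|_{t_i}$ and put $b:=|B|\le n$. For $i\notin B$ the parabolic $p_i=L|_{t_i}$ automatically lies on $L$, while for $i\in B$ the line $p_i$ is a graph over $M|_{t_i}$ recorded by a value $c_i\in L^2(-w)|_{t_i}$. A splitting $E=L\oplus M_\phi$ is then compatible with $\textbf{p}$ precisely when $M_\phi$ passes through every $p_i$ with $i\in B$, i.e. when $\phi(t_i)=c_i$ for all $i\in B$. Thus $(E,\textbf{p})$ is decomposable if and only if the prescribed data $(c_i)_{i\in B}$ lies in the image of the evaluation map
$$
{\rm ev}_B:\ {\rm H}^0(L^2(-w))\longrightarrow \bigoplus_{i\in B}L^2(-w)|_{t_i}.
$$

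It then remains to solve this interpolation cohomologically. From the exact sequence
$$
0\to L^2\Big(-w-\sum_{i\in B}t_i\Big)\to L^2(-w)\ \xrightarrow{\ {\rm ev}_B\ }\ \bigoplus_{i\in B}L^2(-w)|_{t_i}\to 0,
$$
surjectivity of ${\rm ev}_B$ is equivalent to the vanishing of ${\rm H}^1\big(L^2(-w-\sum_{i\in B}t_i)\big)$, which by Serre duality on the elliptic curve holds as soon as $\deg L^2(-w-\sum_{i\in B}t_i)=2k-1-b>0$. Since $b\le n\le 2k-1$ this degree is nonnegative, and it is strictly positive whenever $2k\ge n+2$ or $b<n$. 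In all of these cases ${\rm ev}_B$ is onto, the interpolation is solvable for the given $(c_i)$, and $(E,\textbf{p})$ decomposes; this already yields the bound in the bulk of the range.

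The genuinely delicate point, and the step I expect to be the main obstacle, is the boundary case $2k=n+1$ with $b=n$, i.e. when \emph{every} parabolic avoids the maximal subbundle $L$. Here $L^2(-w-\sum_{i} t_i)$ has degree $0$, so ${\rm ev}_B$ is an isomorphism exactly when this line bundle is nontrivial and degenerates precisely when $L^2\cong\mathcal O_C(w+t_1+\cdots+t_n)$. This is where the \emph{strict} inequality $2k<n+1$ must be earned: one has to rule out that this exceptional identity obstructs the existence of a compatible complement. For $L$ off the finite locus cut out by $L^2\cong\mathcal O_C(w+D)$ it never does and the interpolation is uniquely solvable, so I would isolate that exceptional family of bundles $L$ and dispose of it separately, comparing with the endomorphism computation of Proposition~\ref{case indecomposable}; away from it the degree count above suffices.
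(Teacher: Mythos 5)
Your reduction of parabolic decomposability to surjectivity of the evaluation map $\mathrm{ev}_B$ is correct, and your cohomological argument settles every case except the boundary case you defer at the end; but that deferral is a genuine gap, and the repair you sketch cannot succeed, because the statement is actually \emph{false} in the exceptional configuration. Indeed, when $2k=n+1$, $b=n$ and $L^2\cong\mathcal O_C(w+t_1+\cdots+t_n)$, the kernel of $\mathrm{ev}_B$ is $H^0(C,\mathcal O_C)\cong\mathbb C$, so its image is a hyperplane in $\bigoplus_{i\in B}L^2(-w)|_{t_i}\cong\mathbb C^n$; by your own (correct) equivalence, any data $(c_i)_{i\in B}$ off that hyperplane defines an \emph{indecomposable} parabolic bundle with $2k=n+1$. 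Such configurations exist: for $n=1$ take $\deg L=1$ with $L^2\cong\mathcal O_C(w+t_1)$. Every splitting of $E$ is $L\oplus M_\phi$ with $\phi\in H^0(C,L^2(-w))=H^0(C,\mathcal O_C(t_1))$, and on an elliptic curve the linear system $|t_1|$ has $t_1$ as a base point, so every $M_\phi$ meets the fiber over $t_1$ in $M|_{t_1}$; a parabolic $p_1$ avoiding both $L|_{t_1}$ and $M|_{t_1}$ is then indecomposable although $2k=n+1$. This is exactly the base-point phenomenon of Proposition \ref{case decomposable} and Figure \ref{basepoint} (note that Proposition \ref{case indecomposable}, which you invoke, concerns the bundle $E_0$ and is not relevant here). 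So the exceptional family cannot be ``disposed of''. What your argument honestly proves is the non-strict bound $-n+1\le 2k\le n+1$, with equality possible for indecomposable bundles only in the exceptional configuration; the strict inequalities then follow for \emph{even} $n$, since $-n+1$ and $n+1$ are odd and can never equal $2k$ --- which covers $n=2$, the only case in which the paper uses the Proposition (in Proposition \ref{prop char}) --- whereas for odd $n$ the Proposition as stated is incorrect.

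For comparison, the paper's own proof is a quick dimension count in the dual formulation: for $2k\le -n+1$, the projective family of maps $L\to E$ has dimension $-2k+1\ge n$, each parabolic off $L^{-1}(w)$ imposes one linear condition, and a common solution is declared to give the desired embedding. This has precisely the blind spot your analysis exposes: when the count is tight, all solutions may have vanishing component in $\mathrm{Hom}(L,L)$, i.e.\ be maps with image inside $L^{-1}(w)$ rather than embeddings, and this degeneration occurs exactly when $2k=-n+1$, all $n$ parabolics avoid $L^{-1}(w)$, and $L^2\cong\mathcal O_C(w-t_1-\cdots-t_n)$ --- the mirror, under $L\leftrightarrow L^{-1}(w)$, of your exceptional case. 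So your route (Krull--Schmidt classification of all splittings plus Serre duality) is more precise than the paper's dimension count; it should simply conclude with the corrected non-strict statement and the parity remark, rather than with the hope that the boundary case can be eliminated.
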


\proof
Firstly let us assume $2k < -n+1$. Since $C$ is elliptic, it follows from Riemann-Roch theorem that
$$
\dim {\rm{H}}^0(C,\mathcal O \oplus L^{-2}(w)) = -2k+2.
$$
Then the family of embeddings $L\hookrightarrow L\oplus L^{-1}(w)$ is of dimension $-2k+1$. Our hypothesis on $k$ implies that we can choose an embedding of $L$ containing any parabolic lying outside $L^{-1}(w)$. In this case our quasi-parabolic bundle would be decomposable.
The same argument works with $L^{-1}(w)$ instead of $L$ if $2k > n+1$.  
\endproof

The case $n=2$ is of particular interest for us. Then we shall give a simple consequence of this proposition that will be useful in the sequel. The following proposition characterizes quasi-parabolic bundles $(E,\textbf{p})$, $\textbf{p}=\{ p_1,p_2\}$, of rank $2$ and determinant $\mathcal O_C(w)$ arising in our moduli space of connections. 

\begin{prop}\label{prop char}
Let $(E,\textbf{p})$, $\textbf{p}=\{ p_1,p_2\}$, be a rank $2$ quasi-parabolic bundle over an elliptic curve $C$ with $\det (E) = \mathcal O_C (w)$. If  $(E,\textbf{p})$ is indecomposable then one of the following assertions hold true
\begin{enumerate}
\item $E\simeq L\oplus L^{-1}(w)$ with $\deg (L) = 0$; {\color{black}moreover, the parabolic structure satifies:
\begin{itemize}
\item $p_1,p_2$ do not lie neither on $L^{-1}(w)$, and nor on the same embedding $L\hookrightarrow E$
(a codimension one condition);
\item $L^2=\mathcal O_C(w-t_k)$ for some $k=1,2$, and $p_k$ lie outside of $L$ and $L^{-1}(w)$;
\end{itemize} }
\item $E=E_{1}$ is indecomposable, defined by the unique non trivial extension 
$$
0 \longrightarrow \mathcal O_C  \longrightarrow E_{1}  \longrightarrow \mathcal O_C(w)  \longrightarrow 0.
$$
\end{enumerate}
\end{prop}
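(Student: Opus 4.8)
The plan is to dichotomize according to whether the underlying bundle $E$ is decomposable or indecomposable as a rank $2$ vector bundle, and to lean on Atiyah's classification of bundles over an elliptic curve \cite{At} together with Proposition \ref{prop decomposable}. Consider first the case where $E$ is indecomposable. Since $\det E=\mathcal O_C(w)$ has degree $1$ and $\gcd(2,1)=1$, Atiyah's theorem produces a unique indecomposable rank $2$ bundle with that determinant; computing $\mathrm{Ext}^1(\mathcal O_C(w),\mathcal O_C)=\mathrm{H}^1(C,\mathcal O_C(-w))\cong\mathbb C$ (Serre duality, $K_C=\mathcal O_C$) identifies it with the non-split extension $E_1$. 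Hence $E\cong E_1$, which is assertion $(2)$. Note that any splitting of $(E,\textbf{p})$ as a parabolic bundle would split $E$ as well, so $(E_1,\textbf{p})$ is automatically indecomposable; this is why assertion $(2)$ carries no extra condition on $\textbf{p}$.

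Next I would treat the decomposable case. Writing $E=L_1\oplus L_2$ and using $\det E=\mathcal O_C(w)$ gives $L_2=L_1^{-1}(w)$, so $E=L\oplus L^{-1}(w)$ with $L=L_1$. Applying Proposition \ref{prop decomposable} with $n=2$ forces $-1<2\deg L<3$, that is $\deg L\in\{0,1\}$; since replacing $L$ by $L^{-1}(w)$ merely exchanges the two summands, I may normalize $\deg L=0$, which yields the first clause of $(1)$.

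The remaining and most delicate point is to determine precisely which parabolic structures make $(L\oplus L^{-1}(w),\textbf{p})$ indecomposable. Here I would first record the relevant sub-line-bundle geometry: $L^{-1}(w)$ is the unique degree $1$ sub-line-bundle, while the degree $0$ sub-line-bundles form the family of complements $L_\phi$, a torsor under $\mathrm{Hom}(L,L^{-1}(w))={\rm H}^0(C,L^{-2}(w))\cong\mathbb C$, whose generator $\phi$ vanishes at the single point $v$ characterized by $L^2=\mathcal O_C(w-v)$. Consequently every decomposition of $E$ into line bundles has the shape $L_\phi\oplus L^{-1}(w)$, and $(E,\textbf{p})$ is decomposable if and only if some such splitting carries each $p_k$ on one of its two factors. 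Evaluating at $t_1,t_2$ and tracking the directions $L_\phi|_{t_k}$ swept out as $\phi$ varies gives the dichotomy. If $v\neq t_1,t_2$, then $L_\phi|_{t_k}$ runs over all directions transverse to $L^{-1}(w)|_{t_k}$, and indecomposability is equivalent to both $p_k$ lying off $L^{-1}(w)$ and not simultaneously carried by one common embedding $L_\phi$ (a single equation relating the two positions, hence the codimension one condition). If instead $v=t_k$, i.e.\ $L^2=\mathcal O_C(w-t_k)$, then $L_\phi|_{t_k}$ is frozen at the direction $L|_{t_k}$ for every $\phi$, so the bundle is indecomposable exactly when $p_k$ avoids both $L|_{t_k}$ and $L^{-1}(w)|_{t_k}$. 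These two regimes are precisely the displayed bullet conditions.

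I expect the main obstacle to be this last fiberwise bookkeeping: correctly parametrizing the family of complements $L_\phi$, handling the degenerate point $v$ where the family of achievable directions at $t_k$ collapses to a single one, and verifying both implications of each equivalence — that every excluded configuration genuinely produces a splitting, and that the allowed configurations admit none.
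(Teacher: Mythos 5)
Your proposal is correct and follows essentially the same route as the paper's proof: the same dichotomy on decomposability of $E$, the same appeal to Proposition \ref{prop decomposable} to normalize $\deg L=0$ by swapping factors, the same identification of the unique indecomposable bundle with the nonsplit extension $E_1$, and the same parametrization of all decompositions by the unique destabilizing subbundle $L^{-1}(w)$ together with the one-parameter family of embeddings $L\hookrightarrow E$, whose common point $v$ (where $L^{2}=\mathcal O_C(w-v)$) governs the case analysis. Your fiberwise bookkeeping at $t_1,t_2$ is actually spelled out more carefully than in the paper, which compresses that step into ``one easily deduces.''
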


\proof
If $E$ is decomposable with $\det (E) = \mathcal O_C(w)$ then Proposition \ref{prop decomposable} implies $E\simeq L\oplus L^{-1}(w)$ with $\deg (L)$ equals $0$ or $1$. If $\deg (L)=0$ we are done. If $\deg (L)=1$, then we set $M:=L^{-1}(w)$ to get $E\simeq M^{-1}(w)\oplus M$. Since $\deg (M)=0$, this gives the first assertion when $E$ is decomposable. 
{\color{black} Now any decomposition of $E$ is given by the unique destabilizing subsheaf $L^{-1}(w)$ together with 
any embedding $L\hookrightarrow E$. Such an embedding is given by a linear combination of the initial factor $L$
with a global section of $\mathrm{Hom}(L,L^{-1}(w))\simeq L^{-2}(w)$, so that we get a one parameter family of possible decompositions. We note that all embedings coincide with $L$ exactly at $t\in C$ where $L^{-2}(w)=\mathcal O_C(t)$.
If $t\not=t_1$, then there is a unique $L\subset E$ passing through $p_1$, provided that $p_1\not\subset L^{-1}(w)$;
then a generic $p_2$, i.e. not belonging to either that $L$ or $L^{-1}(w)$ will be an obstruction to decomposablity.
On the other hand, if $t=t_1$, and if $p_1$ is generic, then there is no $L$ passing through and $(E,\textbf{p})$ is indecomposable.
After reasoning similarly at $t_2$, one easily deduce that the only cases where we cannot find a decomposition of $E$ compatible with parabolics
are those two cases listed in the statement.}

If $E$ is indecomposable and $\det (E) = \mathcal O_C(w)$, then its well known that $E$ contains $\mathcal O_C$ as maximal subbundle. Moreover, there is only one indecomposable rank 2 bundle, up to isomorphism, having $\mathcal O_C(w)$ as determinant bundle (see for example \cite[Lemma 4.4 and 4.5]{Ma1}).
\endproof

\section{Moduli space of connections on elliptic curves with two poles}

Now we shall fix the data for our moduli space of logarithmic  connections with two poles over an elliptic curve; see Section \ref{moduli space} for a more complete introduction to moduli spaces of connections.  Let $C$ be an elliptic curve, for computation we assume that $C\subset \mathbb P^2$  is the smooth projective cubic curve
\begin{eqnarray}\label{elliptic}
zy^2 = x(x-z)(x-\lambda z)
\end{eqnarray}
with $\lambda \in \mathbb C$, $\lambda \neq 0,1$. Let us denote by $w_{\infty} = (0:1:0) \in C$ the identity 
with respect to the group structure, and 
\begin{eqnarray}\label{torsionpoints}
w_{0}=(0:0:1),\ w_1=(1:0:1),\ w_\lambda=(\lambda:0:1)
\end{eqnarray}
the $2$-torsion points. 

Let $D=t_1 + t_2$ be a reduced divisor on $C$, where $t_2=-t_1$  with respect to the group structure of $C$, 
i.e. defined by say $x=t$.  We assume $t_1\not=t_2$, i.e. $x\not=0,1,\lambda,\infty$. Let us fix local exponents
$\nu = (\nu_1, \nu_2) \in\mathbb C^{2}$ 
and define eigenvalues 
\begin{eqnarray}\label{nu}
\nu = (\nu_1^{\pm},\nu_2^{\pm}):=\left (\pm \frac{\nu_{1}}{2} - \frac{1}{2}, \pm \frac{\nu_{2}}{2}\right ). 
\end{eqnarray}
therefore satisfying Fuchs Relation
\begin{eqnarray*}
\nu_1^+ + \nu_1^-+\nu_2^+ + \nu_2^-+1=0.
\end{eqnarray*}
To avoid dealing with reducible connections, we assume moreover a generic condition 
$$
\nu_1\pm \nu_2\not\in \mathbb Z\setminus2\mathbb Z\ \ \ (\text{i.e. not an odd integer})
$$
so that $\nu_1^{\epsilon_1}+\nu_2^{\epsilon_2} \notin \mathbb Z$
for any  $\epsilon_k \in \{+,- \}$.

Fix $\zeta: \mathcal O_C(w_{\infty}) \longrightarrow \mathcal O_C(w_{\infty})\otimes \Omega^1_C(t_1)$ any rank one logarithmic connection on $\mathcal O_C(w_{\infty})$ satisfying 
$$
{\rm Res}_{t_1}(\zeta)=-1
$$
Since local $\zeta$-horizontal sections have a simple zero at $t_1$, the invertible sheaf generated by these sections
is $\mathcal O_C(w_{\infty}-t_1)$,
and $\zeta$ corresponds to a holomorphic connection on this latter bundle. In particular, the monodromy of $\zeta$
must be non trivial.

We denote by ${\rm Con}^{\nu}(C,D)$ the moduli space of triples $(E,\nabla, \textbf{p})$ where 
\begin{enumerate}
\item $(E, \textbf{p})$ is a rank $2$ quasi-parabolic vector bundle over $(C,D)$ having $\mathcal O_C(w_{\infty})$ as determinant bundle;
\item $\nabla: E \longrightarrow E\otimes \Omega_C^1(D)$ is a logarithmic connection on $E$ with polar divisor $D$, having $\nu$ as local exponents, ${\rm Res}_{t_k}(\nabla)$ acts on $p_k$ by multiplication by $\nu_k^+$ and 
its trace is given by ${\rm tr}(\nabla) = \zeta$;
\item two triples $(E,\nabla, \textbf{p})$ and $(E',\nabla',\textbf{p}')$ are equivalent when there is an isomorphism between quasi-parabolic bundles $(E,\textbf{p})$ and $(E',\textbf{p}')$ conjugating $\nabla$ and $\nabla'$. 
\end{enumerate}

The reason why we have chosen $\mathcal O_C(w_{\infty})$ instead of $\mathcal O_C$ as our fixed determinant bundle is the following:  there is an open subset of ${\rm Con}^{\nu}(C,D)$ formed by triples $(E_1,\nabla, \textbf{p})$ where $E_1$ is the unique indecomposable rank $2$ bundle over $C$, up to isomorphism, corresponding to the extension 
$$
0 \longrightarrow \mathcal O_C  \longrightarrow E_1  \longrightarrow \mathcal O_C(w_{\infty})  \longrightarrow 0.
$$
This allow us to avoid dealing with varrying underlying vector bundle. 
Some consequences of this choice will be more clear in the next section. 

In the next lines, we will study the forgetful map $Bun: (E,\nabla, \textbf{p}) \mapsto  (E, \textbf{p})$ from  ${\rm Con}^{\nu}(C,D)$  to the moduli space ${\rm Bun}(C,D)$ of \textit{parabolic bundles} $ (E, \textbf{p})$ having $\mathcal O_C(w_{\infty})$ as determinant bundle. Actually,  the construction of the moduli space ${\rm Con}^{\nu}(C,D)$ needs a choice of weights to impose a stability condition. But under our generic hypothesis on $\nu$, all connections are stable and the construction does not depend of this choice.  On the other hand, to obtain a good moduli space of quasi-parabolic bundles, we need to introduce a stability condition  (see Section \ref{def parabolic}).

\subsection{Moduli space of parabolic vector bundles}\label{par} 
In this section we recall the construction of the moduli space of parabolic bundles over an elliptic curve, see \cite{Nestor} for details.  Given weights  $\mu=(\mu_1, \mu_2) \in [0,1]^2$, let ${\rm Bun}_{w_{\infty}}^{\mu}(C,D)$ be the moduli space of $\mu$-semistable parabolic bundles with fixed determinant bundle $\mathcal O_C(w_{\infty})$. The space of weights is divided in two chambers by a wall $\mu_{1}+\mu_{2}=1$. Inside each chamber any point in ${\rm Bun}_{w_{\infty}}^{\mu}(C,D)$ is represented by the same bundle and all $\mu$-semistable bundles are $\mu$-stable. Strictly $\mu$-semistable bundles only occurs along the wall. It follows from \cite[Theorem A]{Nestor} that  ${\rm Bun}_{w_{\infty}}^{\mu}(C,D)$ is isomorphic to $\mathbb P^1\times \mathbb P^1$ for any choice of weights. We recall briefly this construction in the next lines to the readers convenience. 

Firstly, we describe parabolic bundles inside the chambers, see also \cite[Proposition 4.4]{Nestor}. Let us denote by $X_<={\rm Bun}_{w_{\infty}}^<(C,D)$ the moduli space corresponding to the chamber $0<\mu_1+\mu_2< 1$ and by $X_>={\rm Bun}_{w_{\infty}}^>(C,D)$ the other moduli space corresponding to  $1<\mu_1+\mu_2< 2$. All the $\mu$-semistable parabolic bundles arising in $X_<$ are of the form $(E_1,\textbf{p}=\{p_1,p_2\})$. Maximal subbundles of $E_1$ have degree zero, then any parabolic bundle in $X_<$ is $\mu$-stable. Each parabolic bundle is completely determined by 
$$
(p_1,p_2) \in E_1\vert_{t_1}\times E_1\vert_{t_2}\simeq\mathbb P^1\times \mathbb P^1. 
$$
Then, we get the identification $X_< \simeq \mathbb P^1\times \mathbb P^1$.  When $1<\mu<2$,  parabolic bundles $(E_1,\{p_1,p_2\})$ having a degree zero line bundle $L   \hookrightarrow E_1$ passing through the two parabolic directions became unstable, where $L$ is destabilizing:
$$
{\rm Stab}(L) = 1-\mu_{1}-\mu_{2}<0.
$$
But we need to add parabolic bundles of the following form 
$$
(L\oplus L^{-1}(w_{\infty}),\{p_1,p_2\})\;,\;\; \deg(L)=0
$$ 
where no parabolic $p_k$ lie on $L^{-1}(w_{\infty})$ and not all $p_k$ lie on the same embedding $L\hookrightarrow E$
(see Proposition \ref{prop char}). Since automorphisms group of $L\oplus L^{-1}(w_{\infty})$ is two dimensional, all parabolic bundles $$(L\oplus L^{-1}(w_{\infty}),\{p_1,p_2\})$$ with $L$ fixed, represent the same element in $X_>$. Then one obtains an identification $X_> \simeq \mathbb P^1\times \mathbb P^1$.  

When $\mu$ is inside the wall, the picture is described in \cite[Proposition 4.1 and Theorem 4.2]{Nestor}. If $\mu_{1}+\mu_{2}=1$ with $\mu_{k}\neq 0$ for $k=1,2$,  respective parabolic bundles in  $\Gamma_{<}$ and $\Gamma_{>}$ are identified and parabolic bundles $(L\oplus L^{-1}(w_{\infty}),\textbf{p})$ with both direction on $L$ appear. But if $\mu_{k}=0$, then we also find bundles $L\oplus L^{-1}(w_{\infty})$ with $p_{k}$ lying on $L^{-1}(w_{\infty})$.

Let us identify $C$ with its Jacobian variety ${\rm Jac}(C)$ of degree zero line bundles
\begin{eqnarray*}
C &\longrightarrow& {\rm Jac}(C). \\
    p  &\mapsto& \mathcal O_C(w_{\infty}-p)
\end{eqnarray*}
The locus $\Gamma$ of parabolic bundles inside $X_>$ that became unstable when we change the chamber are parametrized by $C$
\begin{eqnarray*}
C\simeq {\rm Jac}(C) &\hookrightarrow& \Gamma\subset X_<\\
    L  &\mapsto& (E,\textbf{p}_L)
\end{eqnarray*}
where $E=E_1$ and $\textbf{p}_L=\{p_1^L,p_2^L\}$ corresponds to the parabolic directions inside $L$. Similarly, we still call $\Gamma$ the respective locus inside $X_>$ parametrized by the following map
\begin{eqnarray*}
C\simeq {\rm Jac}(C) &\hookrightarrow& \Gamma\subset X_>\\
    L  &\mapsto& (E,\textbf{p}_L)
\end{eqnarray*}
where $E=L\oplus L^{-1}(w_{\infty})$, one of the parabolics lying in $L$ and the other one outside both $L$ and $L^{-1}(w_{\infty})$.
This locus $\Gamma$ corresponds to a curve of type $(2,2)$ parametrized by $C$. 

The two charts $X_<$ and $X_>$ are identified outside  $\Gamma$ and this provides a stratification of the coarse moduli space of \textit{simple parabolic bundles}
\begin{eqnarray}\label{strat}
{\rm Bun}_{w_{\infty}}(C,D) = X_<\sqcup X_>.
\end{eqnarray}
That is, simple parabolic bundles are parametrized by two copies of $\mathbb P^{1}\times \mathbb P^{1}$ identified outside a $(2,2)$ curve $\Gamma$. Now we would like to characterize indecomposable but not simple parabolic bundles, see also \cite[Table 1]{Nestor}.

\begin{prop}\label{not simple}
Let $(E, \textbf{p})$ be an indecomposable parabolic bundle over $(C,D)$ having $\mathcal O_{C}(w_{\infty})$ as determinant line bundle. If $(E, \textbf{p})$ is not simple, then 
$$
(E,\textbf{p}) = (L\oplus L^{-1}(w_{\infty}),\textbf{p})
$$ 
where $L^2 = \mathcal O_C(w_{\infty}-t_k)$, $k\in\{1,2\}$. Moreover, $p_k$ does not lie in $L$ neither in $L^{-1}(w_{\infty})$ and the other parabolic lies in $L^{-1}(w_{\infty})$.
\end{prop}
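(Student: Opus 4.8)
The plan is to first pin down the underlying bundle $E$ using Proposition~\ref{prop char}, and then to compute the trace-free parabolic endomorphisms by hand, since a rank $2$ parabolic bundle fails to be simple exactly when $\mathrm{End}_0(E,\textbf{p})\neq 0$ (any parabolic-preserving endomorphism splits into a scalar plus its traceless part, which again preserves the parabolics). First I would invoke Proposition~\ref{prop char}: as $(E,\textbf{p})$ is indecomposable with $\det E=\mathcal O_C(w_\infty)$, either $E=E_1$ or $E=L\oplus L^{-1}(w_\infty)$ with $\deg L=0$. The case $E=E_1$ is ruled out at once: $E_1$ is indecomposable of odd degree $1$, hence stable, hence simple, so $\mathrm{End}(E_1)=\mathbb C\cdot\mathrm{id}$ and every parabolic structure on $E_1$ is simple (equivalently, one checks directly $\mathrm{Hom}(\mathcal O_C(w_\infty),\mathcal O_C)=0$ together with non-splitness of the extension). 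This contradicts the hypothesis, so $E=L\oplus L^{-1}(w_\infty)$.

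Next I would describe $\mathrm{End}_0(E)$ explicitly. Writing $M:=L^{-1}(w_\infty)$ (the unique line subbundle of maximal degree $1$), Riemann--Roch gives $\mathrm{Hom}(M,L)=H^0(L^2(-w_\infty))=0$ and $\mathrm{Hom}(L,M)=H^0(L^{-2}(w_\infty))$ one-dimensional, spanned by a section $\phi_0$ vanishing exactly at the point $t\in C$ with $L^{-2}(w_\infty)=\mathcal O_C(t)$, i.e. $L^2=\mathcal O_C(w_\infty-t)$ (the point $t$ already appearing in the proof of Proposition~\ref{prop char}). Hence $\mathrm{End}_0(E)=\mathbb C H\oplus\mathbb C N$, where $H$ acts by $\pm1$ on $L,M$ and $N$ is the nilpotent map $L\xrightarrow{\phi_0}M$. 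Non-simplicity means some $\Phi=aH+cN\neq 0$ preserves both $p_1,p_2$. Fiberwise, in the basis $(L|_{t_k},M|_{t_k})$ one has $\Phi|_{t_k}=\left(\begin{smallmatrix}a&0\\ c\phi_0(t_k)&-a\end{smallmatrix}\right)$: for $a\neq 0$ its only invariant lines are $M|_{t_k}$ (eigenvalue $-a$) and the line $L_{c/2a}|_{t_k}$ cut out by the embedding $L\xrightarrow{(\mathrm{id},\,\frac{c}{2a}\phi_0)}E$ (eigenvalue $a$), while for $a=0$ the map $N$ vanishes precisely when $t_k=t$ and otherwise has invariant line only $M|_{t_k}$.

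The analysis then splits according to whether $t$ meets a pole. If $t\notin\{t_1,t_2\}$ then $\phi_0(t_k)\neq 0$ for both $k$, and preservation of $p_1$ and $p_2$ by a nonzero $\Phi$ forces them either both onto $M$ (the case $a=0$) or both onto a \emph{common} embedding $L_{c/2a}\hookrightarrow E$ (the case $a\neq 0$); in every subcase one gets a splitting $E=L_{c'}\oplus M$ compatible with the parabolics, contradicting indecomposability via Proposition~\ref{prop char}. Thus $t=t_k$ for some $k$, say $t=t_1$, giving $L^2=\mathcal O_C(w_\infty-t_1)$. Now $\phi_0(t_1)=0$ makes $\Phi|_{t_1}=\mathrm{diag}(a,-a)$, whose only invariant lines are $L|_{t_1}$ and $M|_{t_1}$; for $a\neq 0$ this would force a parabolic onto $L|_{t_1}$ or $M|_{t_1}$, which Proposition~\ref{prop char} forbids for an indecomposable bundle at $t_1=t$. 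So $a=0$ and $\Phi=cN$: being zero at $t_1$ it automatically fixes $p_1$, and it fixes $p_2$ iff $p_2=M|_{t_2}=L^{-1}(w_\infty)|_{t_2}$. Finally, indecomposability with $t=t_1$ forces $p_1$ to avoid both $L|_{t_1}$ and $M|_{t_1}$, yielding exactly the stated configuration; the case $t=t_2$ is symmetric.

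The main obstacle I expect is the bookkeeping in this fiberwise eigenline analysis: one must track carefully how the one-parameter family of degree-zero subbundles $L_c$ degenerates to a single fiber over $t$, and match the condition ``$\Phi$ fixes both parabolics'' against ``$(E,\textbf{p})$ admits a compatible splitting'', so that the non-simple-but-indecomposable locus is pinned down precisely by $L^2=\mathcal O_C(w_\infty-t_k)$ together with the asserted positions of $p_1$ and $p_2$.
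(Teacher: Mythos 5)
Your proposal is correct and follows essentially the same route as the paper: reduce to $E=L\oplus L^{-1}(w_\infty)$ via Proposition \ref{prop char}, then exploit the structure of trace-free endomorphisms together with the base point (at the zero $t$ of the unique section of $L^{-2}(w_\infty)$) of the pencil of embeddings $L\hookrightarrow E$. The only difference is one of completeness: you explicitly rule out $E=E_1$ and prove, via the fiberwise eigenline analysis, the assertion that non-simplicity forces one parabolic onto $L^{-1}(w_\infty)$ --- a step the paper states without inline justification (it is essentially the content of its appendix Proposition \ref{case decomposable}).
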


\proof
If $(E,\textbf{p})$ is indecomposable but not simple, then it follows from Proposition  \ref{prop char} that 
$$
(E,\textbf{p}) = (L\oplus L^{-1}(w_{\infty}),\textbf{p})\;,\;\; \deg(L)=0.
$$ 
Since $(E,\textbf{p})$ is not simple, at least one of the parabolics lies in $L^{-1}(w_{\infty})$.
Writing  
$$
L^2=\mathcal O_C(w_{\infty}-w)
$$
for some $w\in C$, one obtains $$\mathbb P E=\mathbb P (\mathcal O_C\oplus\mathcal O_C(w)).$$ 
The family of sections corresponding to the family of embeddings 
$L\hookrightarrow E$ has a base point at the fiber over $w\in C$. Therefore if $w$ is distinct of $t_k$, $k=1,2$,  
we can choose an $L$ containing the parabolic outside $L^{-1}(w_{\infty})$. In this situation, $(E,\textbf{p})$ is decomposable. Now if $w=t_k$, then $p_k$ must be outside both $L$ and  $L^{-1}(w_{\infty})$, and the other parabolic must lie in  $L^{-1}(w_{\infty})$; indeed, otherwise $(E,\textbf{p})$ would be decomposable (see Figure \ref{basepoint}).
\endproof

\begin{center}
\begin{figure}[h]
\centering
\includegraphics[height=1.3in]{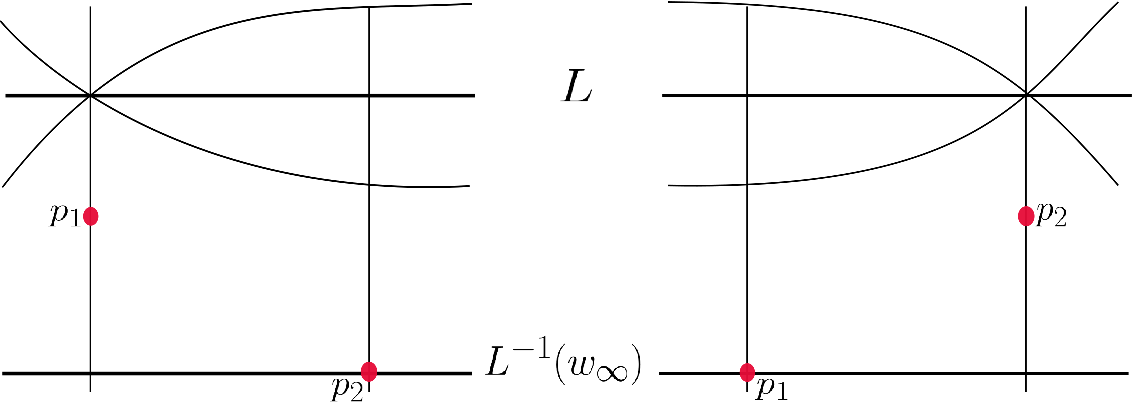}
\caption{Indecomposable not simple, $L^2 = \mathcal O_C(w_{\infty}-t_k)$, $k\in\{1,2\}$.}
\label{basepoint}
\end{figure}
\end{center}

\subsection{Fiber compactification of the moduli space and Higgs fields} 

Let us consider the following open subset of the moduli space of connections corresponding to 
{\color{black}stable} parabolic bundles in $X_<$: 
$$
{\rm Con}_<^{\nu}(C,D)= \{ (E,\nabla, \textbf{p}) \in {\rm Con}^{\nu}(C,D)\;;\;\; (E, \textbf{p}) \in  X_<\}.
$$
Similarly,  we define ${\rm Con}_>^{\nu}(C,D)$ with $X_>$ instead of $X_<$. We note that ${\rm Con}_<^{\nu}(C,D)$ corresponds to triples $(E,\nabla, \textbf{p})$ where $E=E_{1}$.

The union ${\rm Con}_<^{\nu}(C,D)\cup {\rm Con}_>^{\nu}(C,D)$ is the locus where the forgetful map with values at simple parabolic bundles  ${\rm Con}^{\nu}(C,D) \longrightarrow {\rm Bun}_{w_{\infty}}(C,D)$ is well defined.

Given a parabolic bundle  $(E_1,\textbf{p})\in X_<$, any two connections $\nabla$ and $\nabla'$ on it differ to each other by a parabolic Higgs field
$$
\nabla' - \nabla = \Theta \in {\rm H}^0(C,{\rm End}(E_1,\textbf{p})\otimes 	\Omega_C^1(D))=:\Hig_<^{\nu}(C,D).
$$
Since endomorphisms of $(E_1,\textbf{p})$ are scalars and $\Omega_C^1\simeq \mathcal O_C$, this vector space is two dimensional. Then, the fiber of the forgetful map $Bun : {\rm Con}_<^{\nu}(C,D) \longrightarrow X_<$ 
over $(E_1,\textbf{p})$ identifies with the two dimensional affine space 
$$
Bun^{-1}(E_1,\textbf{p})\simeq \nabla^0+{\rm H}^0(C,{\rm End}(E_1,\textbf{p})\otimes\Omega_C^1(D)),
$$
where $(E_1,\nabla^0,\textbf{p})$ is any parabolic connection belonging to the fiber. 
We can compactify the affine $\mathbb C^2$-bundle $Bun : {\rm Con}_<^{\nu}(C,D) \longrightarrow X_<$ by compactifying  the  fiber 
$$
\overline{Bun^{-1}(E_1,\textbf{p})} = \mathbb P \left(\mathbb C\cdot \nabla^0 \oplus {\rm H}^0(C,{\rm End}(E_1,\textbf{p})\otimes 	\Omega_C^1(D))\right).
$$  
Varying 	$(E_1,\textbf{p})\in X_<$ and choosing a local section $\nabla^0$ over local open sets of $X_<$ we construct a $\mathbb P^2$-bundle 
$$
Bun : \overline{{\rm Con}_<^{\nu}(C,D)} \longrightarrow X_<\simeq \mathbb P^1\times \mathbb P^1
$$
where 
$$
\mathbb P\Hig_<^{\nu}(C,D):=\overline{{\rm Con}_<^{\nu}(C,D)}\backslash{\rm Con}_<^{\nu}(C,D)
$$
is the moduli space of projective Higgs fields. 

\subsection{From logarithmic connections to fuchsian systems with five poles}

In order to study the $\mathbb P^2$-bundle 
$$
Bun : \overline{{\rm Con}_<^{\nu}(C,D)} \longrightarrow X_<
$$
we perform three elementary transformations on $E_{1}$ to arrive on the trivial vector bundle. Then, we use global coordinates to obtain an explicit universal family for our moduli space.  The idea is the following. For  any triple $$(E, \nabla, \textbf{p})\in \overline{{\rm Con}_<^{\nu}(C,D)}$$ 
the underlying
rank $2$ bundle is always  $E=E_1$. It is well known that $E_1$ can be obtained from the trivial vector bundle $\mathcal O_C\oplus \mathcal O_C$ by three elementary transformations on distinct basis and distinct fibers (see for instance \cite[Theorem 4.8]{Ma1}). Therefore, to give a logarithmic connection on $E_{1}$ is equivalent to give a fuchsian system on the trivial bundle having three apparent singular points.  This will be explained in the next few lines. 

We say that $t\in C$ is an {\bf apparent singular} point for $\nabla$ if  
\begin{enumerate}
\item the residual part ${\rm Res}_t \nabla$ has $\{\frac{1}{2}, -\frac{1}{2}\}$ as eigenvalues; and 
\item {\color{black}{the $\frac{1}{2}$-eigenspace of ${\rm Res}_t \nabla$ is also invariant by the constant part of the connection matrix. }} 
\end{enumerate}
{\color{black}{These conditions does not depend on the choice of local trivialization for $E$; condition (2) is equivalent 
to say that the local monodromy is semi-simple, i.e. $\pm I$.}}

\begin{remark}\label{rem:App3Cond}We shall note that if $\nabla$ is an $\frak{sl}_2$ connection, requiring that $t$ is an apparent singularity with a given direction $p$ imposes three linear conditions on the coefficients of $\nabla$. In fact, if we denote by $A_{-1}, A_0 \in GL(2,\mathbb C)$ the residual and the constant part of the connection matrix, respectively, then $(i)$ and $(ii)$ above means
\begin{displaymath}
\left\{ \begin{array}{ll}
\left(A_{-1} - \frac{1}{2} I\right)\cdot p  = 0\\
(A_0\cdot p)\wedge p = 0
\end{array} \right.
\end{displaymath}
in which gives us three linear conditions.
\end{remark}

We note that local monodromy does not change when we perform an elementary transformation over an apparent singular point, but the residual matrix becomes a multiple of the identity. In fact, let us assume that $x=t$ is an apparent singular point where the kernel of the residual part is  \begin{eqnarray*}
\textbf{p}=\left(
\begin{array}{ccc} 
1   \\
0 \\
\end{array}
\right).
\end{eqnarray*}
Then around $x=t$ the connection matrix writes 
\begin{eqnarray*}
\nabla= d + \left(
\begin{array}{ccc} 
\frac{1}{2} & \beta_{-1}  \\
0 & -\frac{1}{2} \\
\end{array}
\right)\cdot \frac{dx}{(x-t)} +
\left(
\begin{array}{ccc} 
\alpha_0 & \beta_0  \\
0 & -\alpha_0 \\
\end{array}
\right)\cdot dx + o(x-t)
\end{eqnarray*}
Applying an elementary transformation 
\begin{eqnarray*}
elm^+ (Y)=\left(
\begin{array}{ccc} 
x & 0  \\
0 & 1 \\
\end{array}
\right)\cdot Y
\end{eqnarray*}
we see that the residual matrix becomes 
\begin{eqnarray*}
 \left(
\begin{array}{ccc} 
-\frac{1}{2} & 0  \\
0 & -\frac{1}{2} \\
\end{array}
\right). 
\end{eqnarray*}

We now fix the data in order to  define the moduli space of fuchsian systems with five poles on $C$ and three apparent singular points.  Consider the $2$-torsion points $w_i=(i,0) \in C$, for $i=0,1,\lambda$, defined in (\ref{torsionpoints}).
The divisor defined by them is linearly equivalent to $ 3w_{\infty}$
$$
w_{0}+w_{1}+w_{\lambda} \sim 3w_{\infty}.
$$ 
Let us consider the reduced divisor 
$$
D'=w_0+w_1+w_{\lambda}+t_1+t_2
$$
and fix a local exponent 

\begin{eqnarray}\label{theta}
\theta=\left (\pm \frac{1}{2}, \pm \frac{1}{2}, \pm \frac{1}{2}, \pm \frac{\nu_{1}}{2}, \pm \frac{\nu_{2}}{2}\right )
\end{eqnarray}

We denote by ${\rm Syst}^{\theta}(C,D')$  the moduli space of $\mathfrak {sl}_2$-Fuchsian systems 
(i.e. logarithmic $\mathfrak {sl}_2$-connections on the trivial bundle) having $D'$ as divisor of poles, 
$\theta$ as local exponents and such that 
\begin{itemize}
\item the three singular points $w_0, w_1$ and $w_{\lambda}$ are apparent singular points;
\item the corresponding $\frac{1}{2}$-eigenspaces $q_i$ are pairwise distinct.
\end{itemize}
In other words, up to isomorphism of the trivial vector bundle, one can assume $p_i=(w_i, (i,0))$, for $i=0,1,\lambda$. 

\begin{prop}\label{systems}
There is an isomorphism of moduli spaces
\begin{eqnarray}\label{isomorphism}
{\rm Syst}^{\theta}(C,D')\stackrel{\sim}{\longrightarrow} {\rm Con_{<}^{\nu}}(C,D)
\end{eqnarray}
\end{prop}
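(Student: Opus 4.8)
The plan is to realize the isomorphism (\ref{isomorphism}) concretely, using the three elementary transformations that relate $\mathcal O_C\oplus\mathcal O_C$ and $E_1$ together with the local analysis carried out just before the statement. The whole point is that an elementary transformation preserves local monodromy (Remark \ref{rem:App3Cond} and the computation above), so that it exchanges \emph{apparent singularities with scalar-residue regular points}, while a global twist by a rank one logarithmic connection adjusts the determinant, the trace, and the remaining exponents.

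\textbf{From ${\rm Syst}^\theta(C,D')$ to ${\rm Con}_<^\nu(C,D)$.} Starting from an $\mathfrak{sl}_2$-Fuchsian system $(\mathcal O_C\oplus\mathcal O_C,\nabla)$ with the prescribed apparent structure, I would perform a positive elementary transformation $elm^+$ at each of $w_0,w_1,w_\lambda$ along its $\frac12$-eigenspace $q_i$. By the computation preceding the statement, each such transformation leaves the local monodromy untouched but turns the residue at $w_i$ into the scalar $-\frac12 I$. I would then twist by a fixed rank one logarithmic connection $\xi$ whose residues at $(w_0,w_1,w_\lambda;t_1,t_2)$ are $(\tfrac12,\tfrac12,\tfrac12;-\tfrac12,0)$; these sum to $1$, so by the residue theorem $\xi$ lives on a degree $-1$ line bundle, namely $\mathcal O_C(-w_\infty)$ up to torsion, the torsion being pinned down by $\zeta$. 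After this twist the three points $w_i$ become regular (residue $0$), the determinant becomes $\mathcal O_C(3w_\infty)\otimes\mathcal O_C(-2w_\infty)=\mathcal O_C(w_\infty)$, the trace becomes $\zeta$, and the exponents at $t_1,t_2$ are shifted from $\pm\tfrac{\nu_k}{2}$ to exactly $\nu_k^\pm$. The parabolic $p_k$ is recovered as the $\nu_k^+$-eigenline of ${\rm Res}_{t_k}$. It remains to check that the bundle produced by these three transformations and the twist is $E_1$, which places the output in ${\rm Con}_<^\nu(C,D)$.

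\textbf{The inverse.} Conversely, given $(E_1,\nabla,\textbf{p})$, I would run the construction backwards: by \cite[Theorem 4.8]{Ma1}, $E_1$ is obtained from $\mathcal O_C\oplus\mathcal O_C$ by exactly these three elementary transformations, which furnishes a canonical birational trivialization $E_1\dashrightarrow\mathcal O_C\oplus\mathcal O_C$. Twisting by $\xi^{-1}$ and pulling $\nabla$ through this trivialization yields a logarithmic connection on the trivial bundle with three extra poles at $w_0,w_1,w_\lambda$; since elementary transformations preserve local monodromy, these poles are automatically apparent, with eigenvalues $\{\pm\tfrac12\}$ and $\frac12$-eigenspaces the chosen directions. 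Using that ${\rm Aut}(\mathcal O_C\oplus\mathcal O_C)=GL_2(\mathbb C)$ acts transitively on triples of distinct directions, I can normalize the three eigenspaces to the prescribed $q_i$, landing in ${\rm Syst}^\theta(C,D')$.

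\textbf{Conclusion and main obstacle.} The two assignments are visibly mutually inverse, since the elementary transformations and twists in one direction are undone by those in the other. To upgrade the resulting bijection to an isomorphism of moduli spaces I would carry the construction out in families --- a single relative elementary transformation and a relative twist over the parameter space --- which is exactly what the explicit universal family of Section \ref{sec:universal} provides. I expect the only real difficulty to be the bundle identification in the forward direction: verifying that the three elementary transformations, performed along the pairwise distinct directions $q_i$ and followed by the twist, produce precisely the indecomposable bundle $E_1$ (equivalently, that the result never decomposes and has determinant $\mathcal O_C(w_\infty)$). This is where the pairwise distinctness of the $q_i$, the linear equivalence $w_0+w_1+w_\lambda\sim 3w_\infty$, and the classification of bundles over $X_<$ from Section \ref{par} must be combined; by contrast, the exponent and trace bookkeeping is routine via the residue theorem and the local normal form displayed above.
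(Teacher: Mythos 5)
Your proposal is correct and follows essentially the same route as the paper's proof: three positive elementary transformations at the apparent singularities $w_0,w_1,w_\lambda$ along the $\tfrac12$-eigendirections, followed by a twist by a square root $\xi$ of the rank-one connection fixing trace and determinant (your residue bookkeeping $(\tfrac12,\tfrac12,\tfrac12;-\tfrac12,0)$ for $\xi$ matches the paper's equation for the twist), with the identification of the resulting bundle with $E_1$ resting, as in the paper, on Maruyama and the pairwise distinctness of the $q_i$. The only cosmetic difference is in pinning down the inverse: the paper characterizes it intrinsically by requiring the unique subbundles $L_i=\mathcal O_C(w_i-w_\infty)\subset E_1$ to map to the constant subbundles through $(1:i)$, whereas you invoke existence of a trivialization plus $GL_2(\mathbb C)$-normalization of the three eigendirections, which amounts to the same thing since such a trivialization is unique up to a constant gauge.
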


\begin{center}
\begin{figure}[h]
\centering
\includegraphics[height=2in]{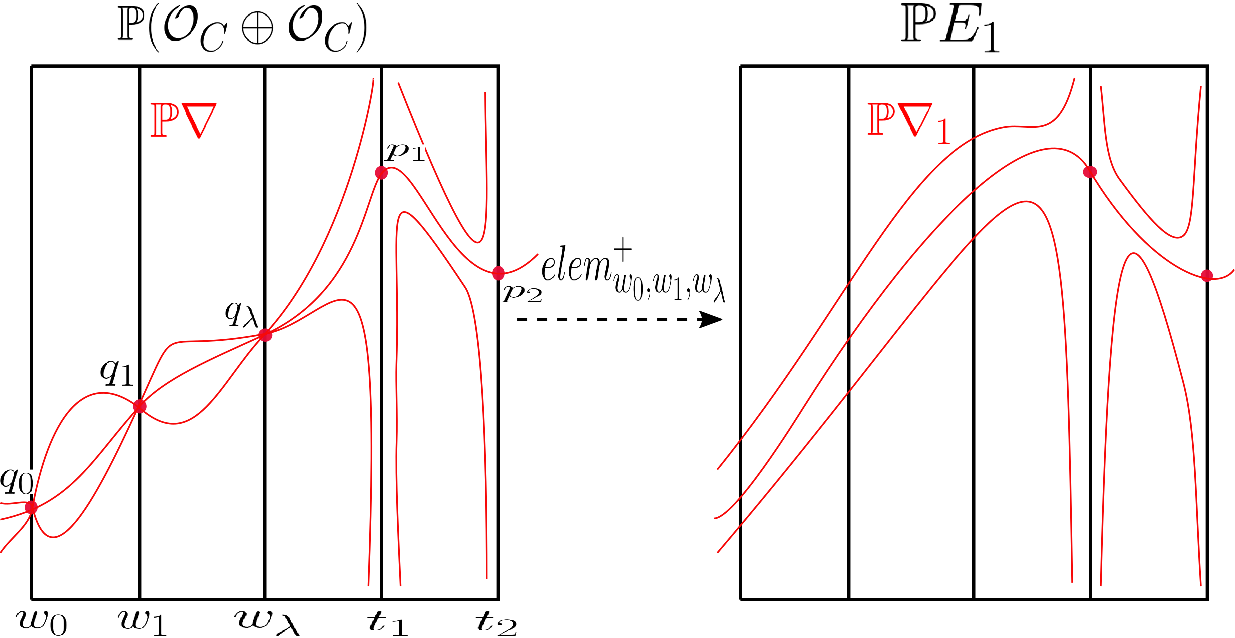}
\caption{Elementary Transformation}
\label{elementary}
\end{figure}
\end{center}

\proof
Let us describe the isomorphism (\ref{isomorphism}); let $(E,\nabla, \textbf{p})$ be a point in ${\rm Syst}^{\theta}(C,D')$.
Consider the composition of three positive elementary transformations 
\begin{eqnarray}\label{elm}
el= elem^{+}_{w_0,w_1,w_{\lambda}}: \mathcal O_C\oplus \mathcal O_C \dashrightarrow \tilde{E}
\end{eqnarray}
on distinct basis points $\{w_0,w_1,w_{\lambda}\}$ and distinct fibers $\{q_{0}, q_{1}, q_{\lambda}\}$ (see Figure \ref{elementary}).
After this birational modification of the trivial bundle, the corresponding indecomposable rank 2 vector bundle $\tilde{E}$ has determinant 
$$
{\rm det}\tilde{E}=\mathcal O_{C}(w_{0}+w_{1}+w_{\lambda}).
$$
Since $w_{0}+w_{1}+w_{\lambda}$ is linearly equivalent to $3w_{\infty}$ then 
$$
{\rm det}\tilde{E}=\mathcal O_{C}(3w_{\infty}).
$$
Applying such composition of elementary transformations (\ref{elm}) to $(\mathcal O_C\oplus \mathcal O_C, \nabla)$ one obtains a new pair $(\tilde{E}, \tilde{\nabla})=el_*(\mathcal O_C\oplus \mathcal O_C, \nabla)$.  The eigenvalues $\{\frac{1}{2}, -\frac{1}{2}\}$ of $\nabla$ over $w_i$,  are changed by
\begin{eqnarray*}
\tilde{\nu}_i^{+}&=&\frac{1}{2}-1=-\frac{1}{2} \\
\tilde{\nu}_i^{-}&=&-\frac{1}{2}
\end{eqnarray*}
and other eigenvalues $\{  \frac{\nu_{i}}{2}, -\frac{\nu_{i}}{2}\}$ over $t_i$, for $i=1,2$, are left unchanged.

Then the birational map $el$ yields a new connection  $\tilde{\nabla}$ on $\tilde{E}$
$$
\tilde{\nabla}: \tilde{E} \longrightarrow  \tilde{E} \otimes \Omega_C^1(w_0+w_1+w_{\lambda}+t_1+t_2)
$$
with local exponents 
$$
\tilde{\nu}=\left (\tilde{\nu}_1^{\pm}=-\frac{1}{2},\tilde{\nu}_2^{\pm}=-\frac{1}{2},\tilde{\nu}_3^{\pm}=-\frac{1}{2}, \pm \frac{\nu_{1}}{2},\pm \frac{\nu_{2}}{2}\right ).
$$ 
Since singularities over $w_{i}$ are apparent,  we promptly deduce that $\tilde{\nabla}$ has local monodromy $-Id$ over them. That is, these singular points  are projectively apparent.  
Its trace connection ${\rm tr}(\tilde{\nabla})$ is the unique rank one connection $d-\frac{dy}{y}$ on 
$$
\mathcal O_C(w_0+w_1+w_\lambda)=\mathcal O_C(3w_{\infty}).
$$
with trivial monodromy, poles on $w_0,w_1,w_\lambda$ and exponents $-1$.
In order to restore a connection on $E_1$ without singularities over $w_i$, for $i=0,1,\lambda$, 
and with trace $\zeta$, we have to twist $(\tilde{E},\tilde{\nabla})$ by a suitable logarithmic rank one connection $\xi$
on $\mathcal O_C(-w_{\infty})$, namely a square root $\xi$ of
\begin{equation}\label{eq:theTwist}
(\mathcal O_C(-w_{\infty}),\xi)^{\otimes2}:=(\mathcal O_C(w_{\infty}),\zeta)\otimes \left(\mathcal O_C(3\omega_\infty),\zeta\otimes(d-\frac{dy}{y})\right)^{\otimes(-1)}
\end{equation}
$$=\left(\mathcal O_C(-2\omega_\infty),\zeta\otimes(d+\frac{dy}{y})\right)
$$
where $\zeta$ is the fixed trace connection in the moduli space ${\rm Con_{<}^{\nu}}(C,D)$.
The resulting logarithmic connection $\nabla_{1}=\tilde{\nabla}\otimes \xi$ on $E_1=\tilde{E}\otimes \mathcal O_C(-w_{\infty})$ is nonsingular over $w_i$, for $i=0,1,\lambda$, and has eingenvalues $\{\frac{\nu_{1}}{2}-\frac{1}{2}, -\frac{\nu_{1}}{2}-\frac{1}{2}\}$ over $t_1$ and $\{\frac{\nu_{2}}{2}, -\frac{\nu_{2}}{2}\}$ over $t_2$. The parabolic structure ${\textbf{p}_{1}}$ on $E_{1}$ over $D=t_{1}+t_{2}$ is defined by the image by $el$ of the parabolic structure on $\mathcal O_C\oplus \mathcal O_C$ forgetting the directions over $w_{i}$, $i=0, 1, \lambda$. 

Then for each element $(E,\nabla, \textbf{p})$ in ${\rm Syst}^{\theta}(C,D')$, we have associated an element $(E_{1},\nabla_{1}, \textbf{p}_{1})$ in ${\rm Con}^{\nu}(C,D)$. This process can be reversed, and one gets an isomorphism
\begin{eqnarray*}
{\rm Syst}^{\theta}(C,D') &\stackrel{\sim}{\longrightarrow}& {\rm Con_{<}^{\nu}}(C,D)\\
			(E,\nabla, \textbf{p}) &\mapsto& (E_{1},\nabla_{1}, \textbf{p}_{1})
\end{eqnarray*}
In fact, the inverse of transformation $E_1\dashrightarrow\mathcal O_C\oplus \mathcal O_C$ obtained
by reversing (\ref{elm}) and (\ref{eq:theTwist}) can be described as follows.
Let $L_i\subset E_1$ denotes the unique embedding of $\mathcal O_C(w_i-w_\infty)$ for $i=0,1,\lambda$.
Then, $L_i$ is transformed in $\mathcal O_C\oplus \mathcal O_C$ as the constant (trivial) subbundle generated 
by $(i,0)$ for $i=0,1,\lambda$. This characterizes the inverse transformation.
\endproof

\begin{remark}\label{trivial line bundle}
As shown in Figure \ref{sections of E_1}, the subbundles $L_0,L_1,L_\lambda\subset E_1$ previously defined
correspond, after projectivization, to sections of $\mathbb P E_1$ having $+1$ self-intersection. 
We similarly define $L_\infty\subset E_1$ as the unique trivial subbundle $\mathcal O_C \hookrightarrow E_1$. 
Through the transformation $\mathcal O_C\oplus \mathcal O_C\dashrightarrow E_1$ obtained by composing 
birational modification $el$  (\ref{elm}) and twist by (\ref{eq:theTwist}), one easily check that the subbundle $L_\infty\simeq\mathcal O_C$ of $E_1$corresponds
to the subbundle $\mathcal O_C(-w_0-w_1-w_\lambda+w_{\infty})\hookrightarrow\mathcal O_C\oplus \mathcal O_C$ generated by the rational section
$$(x,y)\mapsto \begin{pmatrix}1\\ x\end{pmatrix}.$$
This subbundle corresponds, after projectivization, to the unique section 
$$s: C\longrightarrow C\times \mathbb P^1$$ 
having $+4$ self-intersection, passing through the points 
$$q_i=((i,0), (1:i)),\ \ \ i=0,1,\lambda$$ 
and which is invariant under the hyperelliptic involution.

\begin{center}
\begin{figure}[h]
\centering
\includegraphics[height=1.6in]{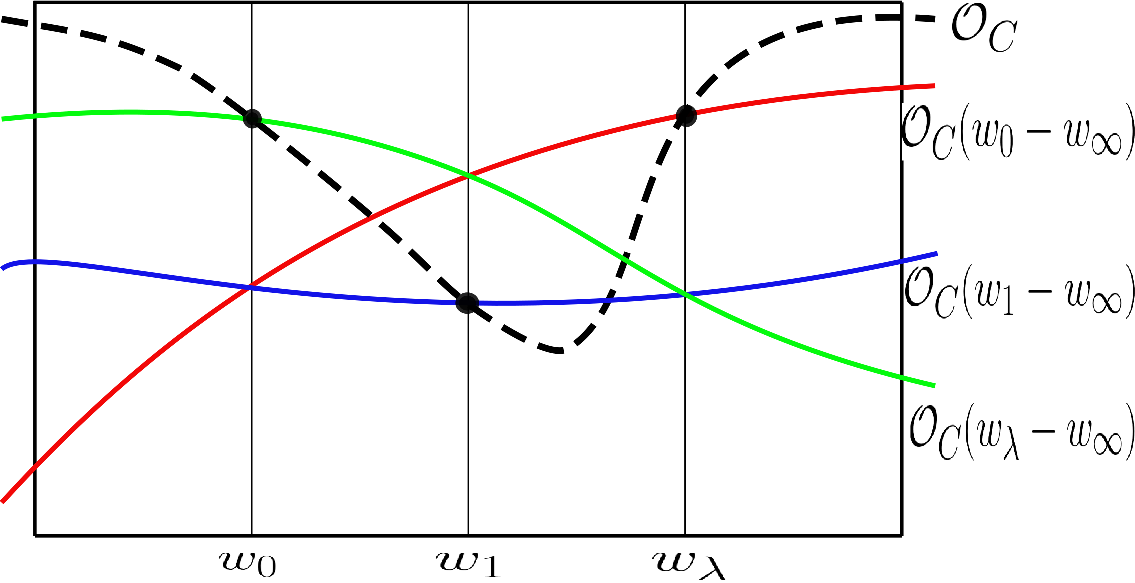}
\caption{Sections of $\mathbb P E_1$}
\label{sections of E_1}
\end{figure}
\end{center}
\end{remark}

\subsection{Universal family of fuchsian systems}\label{sec:universal}
As we have seen in Proposition \ref{systems} the open subset  ${\rm Con}_<^{\nu}(C,D)$ of the moduli space  ${\rm Con}^{\nu}(X,D)$  is isomorphic to the moduli space of fuchian systems ${\rm Syst}^{\theta}(C,D')$ with three apparent singularities over $w_{i}$ and local exponent $\theta$ as in (\ref{theta}). In this section, we exhibit an explicit universal family for ${\rm Syst}^{\theta}(C,D')$, when restricted to two suitable open sets of parabolic bundles.  As a consequence, we determine the  $\mathbb P^2$-bundle 
$$
Bun : \overline{{\rm Con}_<^{\nu}(C,D)} \longrightarrow X_<\simeq \mathbb P^1\times \mathbb P^1.
$$

Firstly we would like to write  $X_<\simeq \mathbb P^1\times \mathbb P^1$, minus two points,  as a union of two copies $U_0\cup U_{\infty}$ of $ \mathbb C^2$ (see below).  The $\mathbb P^2$-bundle above 
will be trivial when restricted to each one of them.

Let $U_0$ and $U_{\infty}$ be defined as follows
$$
U_0 = \left\{ \z=((z_1:1),(z_2:1)) \; ; \;\; z_1,z_2\in\mathbb C \right\}
$$
$$
U_{\infty} = \left\{ ((1:Z_1),(1:Z_2)) \; ; \;\; Z_1,Z_2\in\mathbb C \right\}.
$$
Note that $U_0\cup U_{\infty}$ cover $\mathbb P^1\times \mathbb P^1$ minus two points: $$((1:0),(0:1)) \; \text{and}\;((0:1),(1:0)).$$

Each point $\z \in U_{0}\subset \mathbb P^{1} \times \mathbb P^{1}$ corresponds to a parabolic vector bundle $(E_{1}, \z)$ in $X_{<}$. By using the compositions of elementary transformations (\ref{elm}), one can identify each $(E_{1}, \z)$ in $X_{<}$ with a parabolic bundle $(\mathcal O_C\oplus \mathcal O_C, \textbf{p}_{\z})$ where the parabolic structure is
$$
\textbf{p}_{\z}:= \{q_{0}, q_{1}, q_{\lambda}, z_1,z_2\}.
$$ 
Thanks to Proposition  \ref{systems}, we identify the fiber $Bun^{-1}(E_{1}, \z)$ with the space of fuchian systems in ${\rm Syst}^{\theta}(C,D')$ over the same fixed parabolic bundle $$(\mathcal O_C\oplus \mathcal O_C, {\textbf{p}_{\z}})$$
for particular $\theta$ as in (\ref{theta}).

Any fuchsian system $\nabla\in{\rm Syst}^{\theta}(C,D')$ writes 
\begin{eqnarray*}
\nabla= d + \left(
\begin{array}{ccc} 
\alpha & \beta  \\
\gamma & -\alpha \\
\end{array}
\right)\cdot \frac{dx}{y} 
\end{eqnarray*}
where $\omega = dx/y$ is a global regular $1$-form on $C$ and $\alpha, \beta, \gamma$ are rational functions on $C$  with at most simple poles on 
$$
D'=w_{0}+w_{1}+w_{\lambda}+t_{1}+t_{2}.
$$
The vector space that parametrizes the space of such rational functions is five dimension: 
$$
\alpha, \beta, \gamma \in {\rm H}^{0}(C,\mathcal O_{C}(w_{0}+w_{1}+w_{\lambda}+t_{1}+t_{2})).
$$
Then, we have $15$ parameters for $\alpha, \beta$ and $\gamma$. Each eigenvector $q_{i}$ over the apparent singular point $w_{i}$ imposes three linear conditions on the coefficients of $\nabla$, for each $i=0, 1, \lambda$ (see Remark \ref{rem:App3Cond}). Similarly, having  $z_1$ (or $z_2$) as eigenvector with corresponding $\nu_{1}^{+}$ (or $\nu_{2}^{+}$) as eigenvalue imposes  two linear conditions.

In order to find a basis for the moduli space of fuchsian systems $\nabla(\z)\in{\rm Syst}^{\theta}(C,D')$ with given parabolic structure $\textbf{p}_{\z}$, we have solved this linear system with $13$ equations and $15$ variables. 

Let us denote by $t_{1}=(t,\yt)$ and $t_{2}=(t,-\yt)$. For any $\z\in U_{0}$, a fuchsian system $\nabla(\z)$ like above writes
$$
\nabla(\z) = \nabla^{0}(\z) + c_{1}\cdot \Theta_{1}^{0}(\z)+ c_{2}\cdot\Theta_{2}^{0}(\z)
$$
for $c_{1}, c_{2} \in \mathbb C$, where $\nabla^{0}(\z)$ is a particular fuchsian system as well as  $\Theta_{1}^{0}(\z)$, $\Theta_{2}^{0}(\z)$ are particular Higgs fields given by Table \ref{basis}. 

In order to obtain also a universal family over the other open set $U_{\infty}$ of $X_{<}$,  for each $(Z_1,Z_2)=(\frac{1}{z_1},\frac{1}{z_2})\in U_{\infty}$ the respective fuchsian and Higgs fields are
\begin{displaymath}
\left\{ \begin{array}{ll}
\nabla^{\infty}(Z_1,Z_2) = \nabla^{0}(\frac{1}{Z_1},\frac{1}{Z_2}) +\frac{\nu_1}{2}\cdot Z_1 \cdot \Theta_1^{0}(\frac{1}{Z_1},\frac{1}{Z_2}) + \frac{\nu_2}{2}\cdot Z_2 \cdot \Theta_2^{0}(\frac{1}{Z_1},\frac{1}{Z_2}) \\
\Theta_1^{\infty}(Z_1,Z_2) = (Z_1)^2\cdot\Theta_1^{0}(\frac{1}{Z_1},\frac{1}{Z_2})\\
\Theta_2^{\infty}(Z_1,Z_2) = (Z_2)^2\cdot\Theta_2^{0}(\frac{1}{Z_1},\frac{1}{Z_2})
\end{array} \right.
\end{displaymath}
Here we shall explain why the parameters $\nu_1, \nu_2$ appear in the definition of $\nabla^{\infty}$. Note that $\nabla^0$ is a rational section of ${\rm Con_<}^{\nu}(C,D) \longrightarrow X_<$. In fact, $\nabla^{0}(\frac{1}{Z_1},\frac{1}{Z_2})$ has $\{Z_1Z_2=0\}$ as pole. This can be checked by using the explicit expression for $\nabla^0$ given in Table \ref{basis}. Then the coeffients $\frac{\nu_1}{2}, \frac{\nu_2}{2}$ appearing in the definition of $\nabla^{\infty}$ have been chosen to make $\nabla^{\infty}$ regular.

\begin{table}
\centering
\begin{tabular}[p]{|c|}
   \hline
$
\nabla^{0}= d + \left(
\begin{array}{ccc} 
\alpha & \beta  \\
\gamma & -\alpha \\
\end{array}
\right)\cdot \frac{dx}{y} 
$ \\
   \hline 
   $\alpha = \frac{1}{4}\left (\frac{((\lambda+(x-\lambda-1)t)x(\nu_1+\nu_2)-((\lambda+1)x^2+(-\lambda-(\lambda+1)t)x+t\lambda))}{(x-t)y}\right )$ \\
    $+ \frac{1}{4}\left (\frac{(\lambda+(t-\lambda-1)x)t(\nu_1-\nu_2)}{(x-t)\yt}-\frac{2x(\nu_1z_1+\nu_2z_2)}{y}+\frac{(\lambda-t+1)(\nu_1z_1-\nu_2z_2)}{\yt}\right )$ \\ \hline
   $\beta = \frac{1}{4}\left (\frac{2(\nu_1z_1+\nu_2z_2)-(\lambda+1-x)(\nu_1+\nu_2+1)+2x}{y}-\frac{2(t(\nu_2-\nu_1)+\nu_1z_1-\nu_2z_2)}{\yt} \right )$ \\ \hline
   $ \gamma = \frac{x}{4}\left (\frac{2(\lambda-x(1+\lambda-t))(\nu_1z_1+\nu_2z_2)}{y(x-t)}+\frac{\lambda(\nu_1+\nu_2-1)}{y}+\frac{2\yt(\nu_1z_1-\nu_2z_2)}{(x-t)t}\right )$ \\ \hline \hline
   $ \Theta_{1}^0=  \left(
\begin{array}{ccc} 
\alpha_1 & \beta_1  \\
\gamma_1 & -\alpha_1 \\
\end{array}
\right)\cdot \frac{dx}{y}$  \\ \hline
    $\alpha_1 = \frac{-1}{2(x-t)}\left (\frac{2z_1x\left ((t-z_1)x+t(z_1-1)+\lambda(1-t)\right )}{y}+\frac{\left (t\lambda+(2t-z_1)(t-\lambda-1)z_1\right )x}{\yt}\right )$ \\
	$+\frac{1}{2(x-t)}\left (\frac{t((t-2z_1)\lambda-(t-\lambda-1)z_1^2)}{\yt}\right )$ \\ \hline
	$\beta_1 = \frac{x(t-z_1)  \left (x+z_1-\lambda-1\right )+t(z_1-\lambda)(z_1-1)}{y(x-t)}+\frac{(z_1-t)^2x+t((2t-z_1)z_1+\lambda-t(\lambda+1))}{\yt(x-t)}$\\ \hline 
	$\gamma_1 = \frac{z_1x}{(x-t)}\left ( \frac{x\left (z_1(1-t)+\lambda(z_1-1)\right )+\lambda(t-z_1)}{y}-\frac{\yt z_1}{t} \right )$ \\\hline\hline
	$\Theta_{2}^0=  \left(
\begin{array}{ccc} 
\alpha_2 & \beta_2  \\
\gamma_2 & -\alpha_2 \\
\end{array}
\right)\cdot \frac{dx}{y} $ \\ \hline
$\alpha_2 = \frac{-1}{2(x-t)}\left (\frac{2z_2x((t-z_2)x+t(z_2-1)+\lambda(1-t))}{y}-\frac{((t\lambda+(2t-z_2)(t-\lambda-1)z_2)x}{\yt}\right )$\\
	$+\frac{-1}{2(x-t)}\left (\frac{t((t-2z_2)\lambda-(t-\lambda-1)z_2^2)}{\yt}\right )$\\ \hline
$\beta_2 = \frac{x(t-z_2)(x+z_2-\lambda-1)+t(z_2-\lambda)(z_2-1)}{y(x-t)}-\frac{(z_2-t)^2x+t\left ((2t-z_2)z_2+\lambda-t(\lambda+1)\right )}{\yt(x-t)}$\\ \hline
	$\gamma_2 = \frac{z_2x}{(x-t)}\left (\frac{x\left (z_2(1-t)+\lambda(z_2-1)\right )+\lambda(t-z_2)}{y}+\frac{\yt z_2}{t}\right )$\\ \hline
   \end{tabular}
\caption{Basis for the universal family over $U_{0}$}
\label{basis}
\end{table}

We summarize the above discussion in the next proposition.  The intrinsic meaning of these basis will be given in the next section. 

\begin{prop}\label{trivialization}
For each $j\in\{0,\infty\}$, the $\mathbb P^{2}$-bundle $Bun : \overline{{\rm Con}_<^{\nu}(C,D)} \longrightarrow X_<$ is trivial when restricted to $U_{j}$:
\begin{eqnarray*}
U_j\times \mathbb P^2 &\stackrel{\simeq}{\longrightarrow}& \overline{{\rm Con}_<^{\nu}(C,D)}|_{U_j} \\
	(u_{j} , (C_0:C_1:C_2)) &\mapsto& \mathbb P [C_0\cdot\nabla^j(u_{j})+C_1\cdot\Theta_1^j(u_{j})+C_2\cdot\Theta_2^j(u_{j})]
\end{eqnarray*} 
and the open set ${\rm Con}_<^{\nu}(C,D)|_{U_j}$ is given by $(C_0:C_1:C_2)=(1:c_1:c_2)$.
\end{prop}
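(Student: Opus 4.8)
The plan is to deduce everything from the fibration structure already established in the subsection on fiber compactification. There we saw that $\overline{{\rm Con}_<^{\nu}(C,D)}\to X_<$ is a $\mathbb P^2$-bundle whose fiber over a parabolic bundle $(E_1,\textbf{p})$ is the projectivization of the rank $3$ vector space $\mathbb C\cdot\nabla^0\oplus\Hig_<^{\nu}(C,D)$, one copy of a chosen connection together with the two-dimensional space of parabolic Higgs fields $\Hig_<^{\nu}(C,D)={\rm H}^0(C,{\rm End}(E_1,\textbf{p})\otimes\Omega_C^1(D))$. To trivialize a projective bundle over the affine chart $U_j\simeq\mathbb C^2$ it suffices to exhibit a holomorphic frame $(\nabla^j(u_j),\Theta_1^j(u_j),\Theta_2^j(u_j))$ of this rank $3$ bundle over $U_j$; the assignment $(u_j,(C_0:C_1:C_2))\mapsto\mathbb P[C_0\nabla^j+C_1\Theta_1^j+C_2\Theta_2^j]$ is then the fiberwise projectivization of a linear isomorphism, hence automatically an isomorphism of $\mathbb P^2$-bundles. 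The whole statement therefore reduces to checking that the objects of Table \ref{basis}, and their rescalings over $U_\infty$, form such frames.

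Over $U_0$ I would verify directly from Table \ref{basis} that, for each $\z=((z_1:1),(z_2:1))$, the entries $\alpha,\beta,\gamma$ are rational functions on $C$ with at most simple poles supported on $D'=w_0+w_1+w_\lambda+t_1+t_2$, so that $\nabla^0(\z)$ is an $\mathfrak{sl}_2$-Fuchsian system; and that it has local exponents $\theta$ as in (\ref{theta}), with $w_0,w_1,w_\lambda$ apparent of prescribed eigendirections $q_i$ and with $z_1,z_2$ the $\nu_k^+$-eigendirections at $t_1,t_2$. In other words $\nabla^0(\z)\in{\rm Syst}^{\theta}(C,D')$ lies over $(\mathcal O_C\oplus\mathcal O_C,\textbf{p}_{\z})$ — this is precisely the solution of the $13\times 15$ linear system described above (the residue and eigenvector conditions at the five poles $w_0,w_1,w_\lambda,t_1,t_2$). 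Likewise I would check that $\Theta_1^0(\z),\Theta_2^0(\z)$ are traceless endomorphism-valued forms with poles on $D'$ preserving the apparent and parabolic data, hence parabolic Higgs fields, and that they are linearly independent, so they span the two-dimensional $\Hig_<^{\nu}(C,D)$. Since all entries depend polynomially on $z_1,z_2$, the frame is holomorphic on all of $U_0=\mathbb C^2$.

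The one genuinely delicate point, and the main obstacle, is regularity over $U_\infty$. The naive substitution $\nabla^0(\tfrac1{Z_1},\tfrac1{Z_2})$ acquires poles along $\{Z_1Z_2=0\}$, so I must verify that the correction $\tfrac{\nu_1}{2}Z_1\Theta_1^0(\tfrac1{Z_1},\tfrac1{Z_2})+\tfrac{\nu_2}{2}Z_2\Theta_2^0(\tfrac1{Z_1},\tfrac1{Z_2})$ cancels these poles exactly, producing a connection $\nabla^\infty$ holomorphic on all of $U_\infty$ — this is exactly the reason the coefficients $\nu_k/2$ were inserted, as already noted. Similarly the factors $(Z_i)^2$ in $\Theta_i^\infty=(Z_i)^2\Theta_i^0(\tfrac1{Z_1},\tfrac1{Z_2})$ must clear the corresponding poles of the Higgs fields. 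Granting these pole cancellations, $(\nabla^\infty,\Theta_1^\infty,\Theta_2^\infty)$ is a holomorphic frame over $U_\infty$, giving the second trivialization.

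Finally, the identification of the affine locus is structural: by construction the boundary $\mathbb P\Hig_<^{\nu}(C,D)=\overline{{\rm Con}_<^{\nu}(C,D)}\setminus{\rm Con}_<^{\nu}(C,D)$ is the projectivized Higgs locus, which in each frame $(\nabla^j,\Theta_1^j,\Theta_2^j)$ is exactly the hyperplane $\{C_0=0\}$ where the $\nabla^j$-component vanishes. Hence genuine connections correspond to $C_0\neq0$, and normalizing $C_0=1$ gives the stated parametrization $(1:c_1:c_2)$, matching the decomposition $\nabla(\z)=\nabla^0(\z)+c_1\Theta_1^0(\z)+c_2\Theta_2^0(\z)$.
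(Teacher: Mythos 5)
Your proposal is correct and follows essentially the same route as the paper, which treats this proposition as a summary of the preceding construction: identify the fiber over $\z$ with Fuchsian systems in ${\rm Syst}^{\theta}(C,D')$ via Proposition \ref{systems}, check that Table \ref{basis} solves the $13\times 15$ linear system (equivalently, that $(\nabla^0,\Theta_1^0,\Theta_2^0)$ is a holomorphic frame over $U_0$), and use the $\frac{\nu_k}{2}Z_k$ corrections and $(Z_k)^2$ rescalings to obtain a regular frame over $U_\infty$. Your identification of the affine locus $\{C_0\neq 0\}$ with genuine connections (the complement being the projectivized Higgs locus) is exactly the paper's convention, so nothing is missing.
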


\subsection{An open set of the moduli space}
Without loss of generality, we can assume that $\nu$ is as in (\ref{nu})
\begin{eqnarray}\label{nu fixed}
(\nu_1^{\pm},\nu_2^{\pm})=\left (\pm \frac{\nu_{1}}{2} - \frac{1}{2}, \pm \frac{\nu_{2}}{2}\right )
\end{eqnarray}
(see Section \ref{moduli space}). The condition $\nu_k^{+}=\nu_k^{-}$ is equivalent to $\nu_k=0$ for each $k\in \{1,2\}$.

\subsubsection{Description of $\overline{{\rm Con}_<^{\nu}(C,D)}$} 
We now apply  results of Section \ref{sec:universal} to study the $\mathbb P^{2}$-bundle 
$$
Bun : \overline{{\rm Con}_<^{\nu}(C,D)} \longrightarrow X_<\simeq \mathbb P^1_{z_1}\times \mathbb P^1_{z_2}
$$
as well as the projectivized moduli space of Higgs fields
$$
\mathbb P\Hig_<^{\nu}(C,D):=\overline{{\rm Con}_<^{\nu}(C,D)}\backslash{\rm Con}_<^{\nu}(C,D).
$$

\begin{thm}\label{main1}
With  identification $X_<\simeq \mathbb P_{z_1}^1\times \mathbb P_{z_2}^1$, the following assertions hold true:
\begin{enumerate}
\item  The canonical isomorphism 
$
\Hig_<^{\nu}(C,D) \stackrel{\sim}{\longrightarrow} T^*X_<
$ 
is given over charts $U_0$ and $U_\infty$  by the identifications
\begin{equation}\label{canisomHiggserredual}
\left\{\begin{matrix} \Theta_1^0 &\mapsto& dz_1 \\ \Theta_2^0 &\mapsto& dz_2\end{matrix}\right.
\ \ \ \text{and}\ \ \ 
\left\{\begin{matrix} \Theta_1^\infty &\mapsto& dZ_1 \\ \Theta_2^\infty &\mapsto& dZ_2\end{matrix}\right.
\end{equation}
respectively.
The Liouville form is $c_1dz_1+c_2dz_2$ and its differential 
$$
\omega=dc_1\wedge dz_1+dc_2\wedge dz_2
$$
induces the canonical symplectic structure on $\Hig_<^{\nu}(C,D)\vert_{U_0}$.
\item We have $\overline{{\rm Con}_<^{\nu}(C,D)}=\mathbb P(\mathcal E_{1}^{\nu})$,  where $\mathcal E_{1}^{\nu}$ is the extension of $\mathcal O_{X_{<}}$ by $T^*X_{<}$ 
$$
0\longrightarrow T^*X_{<}  {\longrightarrow}  \mathcal E_{1}^{\nu} {\longrightarrow}  \mathcal O_{X_{<}} \longrightarrow 0
$$
determined by 
$$
\left(\frac{\nu_{1}}{2}, \frac{\nu_{2}}{2}\right)\in {\rm H}^{1}(X_{<}, T^*X_{<})\simeq {\rm H}^{1}( \mathbb P^1_{z_1}, T^*\mathbb P^1_{z_1})\oplus {\rm H}^{1}(\mathbb P^1_{z_2}, T^*\mathbb P^1_{z_2} )\simeq \mathbb C^{2}.
$$
\end{enumerate}
\end{thm}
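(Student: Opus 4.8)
The plan is to read both assertions off the explicit universal family of Proposition \ref{trivialization}, after installing the intrinsic identification of parabolic Higgs fields with cotangent vectors. Recall that since the parabolic endomorphisms of $(E_1,\textbf{p})$ are scalars, the tangent space $T_{(E_1,\textbf{p})}X_<$ is the first cohomology of the sheaf of trace-free endomorphisms of $E_1$ preserving $\textbf{p}$; the trace pairing together with Serre duality, exactly as in the argument of Lemma \ref{cohomology}, produces a canonical isomorphism
\[
\Hig_{(E_1,\textbf{p})}=H^0(C,\mathrm{End}(E_1,\textbf{p})\otimes\Omega^1_C(D))\;\stackrel{\sim}{\longrightarrow}\;\bigl(T_{(E_1,\textbf{p})}X_<\bigr)^\ast .
\]
Letting $(E_1,\textbf{p})$ vary over $X_<$ gives the canonical isomorphism $\Hig_<^{\nu}(C,D)\stackrel{\sim}{\to}T^*X_<$ of assertion (1).

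For (1), I would produce the chart expressions as follows. In the coordinates of $U_0$ the point $\z=(z_1,z_2)$ records the two parabolic directions, so $\partial_{z_1},\partial_{z_2}$ are the Kodaira--Spencer classes of the deformations moving $p_1$ and $p_2$. Using Table \ref{basis} I would check that the residue pairing sends $\Theta_i^0$ to the covector dual to these deformations, i.e. $\langle\Theta_i^0,\partial_{z_j}\rangle=\delta_{ij}$; equivalently, $\Theta_i^0$ is the strongly parabolic Higgs field whose residue at $t_i$ realizes $dz_i$. This gives $\Theta_i^0\mapsto dz_i$ over $U_0$, and the same computation in the other chart gives $\Theta_i^\infty\mapsto dZ_i$. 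Consistency of the two expressions amounts to comparing the frame transition $\Theta_i^\infty=Z_i^2\,\Theta_i^0$ with the cotangent transition $dz_i=-Z_i^{-2}\,dZ_i$; these differ only by the constant sign cocycle $\{-1\}$, which is a coboundary on the two-chart cover and hence harmless. Once the identification is in place, the fiber coordinate $c_i$ of the universal family $\nabla^0+c_1\Theta_1^0+c_2\Theta_2^0$ is the cotangent fiber coordinate, so the tautological form is $c_1\,dz_1+c_2\,dz_2$ and its differential $\omega=dc_1\wedge dz_1+dc_2\wedge dz_2$ is the canonical symplectic form.

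For (2), the fiberwise compactification presents $\overline{{\rm Con}_<^{\nu}(C,D)}=\mathbb P(\mathcal E_1^{\nu})$, where over $U_j$ the rank $3$ bundle is framed by $\{\nabla^j,\Theta_1^j,\Theta_2^j\}$. From the universal family one reads the transition from the $U_0$-frame to the $U_\infty$-frame,
\[
\nabla^\infty=\nabla^0+\tfrac{\nu_1}{2}Z_1\,\Theta_1^0+\tfrac{\nu_2}{2}Z_2\,\Theta_2^0,\qquad \Theta_i^\infty=Z_i^2\,\Theta_i^0 .
\]
This matrix is block upper-triangular: the line $\mathbb C\cdot\nabla^j$ maps to itself with factor $1$ (the quotient $\mathcal O_{X_<}$), while the plane $\langle\Theta_1^j,\Theta_2^j\rangle$ is preserved with the cotangent transition (the sub $T^*X_<$, by part (1)). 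Hence $\mathcal E_1^{\nu}$ fits in $0\to T^*X_<\to\mathcal E_1^{\nu}\to\mathcal O_{X_<}\to 0$, and its extension class is the \v Cech $1$-cocycle given by the off-diagonal block $\tfrac{\nu_1}{2}Z_1\Theta_1^0+\tfrac{\nu_2}{2}Z_2\Theta_2^0$. Under $\Theta_i^0\mapsto dz_i$ this equals $-\tfrac{\nu_1}{2}\tfrac{dZ_1}{Z_1}-\tfrac{\nu_2}{2}\tfrac{dZ_2}{Z_2}$, and since $\tfrac{dZ_i}{Z_i}$ generates the $i$-th summand of $H^1(X_<,T^*X_<)\simeq H^1(\mathbb P^1,\Omega^1)\oplus H^1(\mathbb P^1,\Omega^1)\simeq\mathbb C^2$, the class is $(\tfrac{\nu_1}{2},\tfrac{\nu_2}{2})$ for the fixed generators. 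As $U_0\cup U_\infty$ omits only the two points $((1:0),(0:1))$ and $((0:1),(1:0))$, a codimension-two subset, the locally free sheaf $\mathcal E_1^{\nu}$ and its class extend uniquely, so the two charts determine everything.

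The main obstacle is the normalization in part (1): matching the transition signs is routine, but proving that $\Theta_i^0$ maps to $dz_i$ \emph{exactly} (not to a nonzero multiple) requires the explicit residue/Kodaira--Spencer pairing against the deformations of the parabolic directions, using the unwieldy formulas of Table \ref{basis}; this scalar is precisely what fixes the coefficient of the Liouville form and the precise class $(\tfrac{\nu_1}{2},\tfrac{\nu_2}{2})$ in part (2). A convenient alternative is to compute the intrinsic symplectic form of ${\rm Con}_<^{\nu}(C,D)$ directly in the coordinates $(z_1,z_2,c_1,c_2)$ and verify it equals $dc_1\wedge dz_1+dc_2\wedge dz_2$, which simultaneously pins the normalization and yields the Liouville statement.
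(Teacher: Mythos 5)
Your proposal takes essentially the same route as the paper: part (1) via the trace pairing and Serre duality (the paper simply cites \cite[Section 6, Prop.~6.1]{ViktoriaFrank} for the canonical isomorphism and the chart identification that you propose to verify by the residue/Kodaira--Spencer pairing against Table \ref{basis}), and part (2) by reading the block-triangular transition cocycle off the universal family of Proposition \ref{trivialization} and using that $U_0\cup U_\infty$ covers $X_<$ minus two points. Your additional remarks --- the $-Z_i^2$ versus $Z_i^2$ sign in the cotangent transition and the codimension-two uniqueness of the extension --- only make explicit points the paper passes over silently, and do not change the argument.
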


\proof
The isomorphism of (1) comes from Serre duality (see \cite[Section 6]{ViktoriaFrank}):
$$
\Hig_<^{\nu}(C,D)={\rm H}^0(C,{\rm End}(E_1,\textbf{p})\otimes \Omega_C^1(D))\ \simeq\ 
{\rm H}^1(C,{\rm End}(E_1,\textbf{p}))\simeq T^*X_{<}
$$
and is well-known. By the same argument as in \cite[Prop 6.1]{ViktoriaFrank}, we can check the identification (\ref{canisomHiggserredual}) therefore proving (1).

It follows by Proposition \ref{trivialization} that $\{\nabla^{j}, \Theta_1^j, \Theta_2^j\}$, for each $j\in\{0,\infty\}$, are sections trivializing $\overline{{\rm Con}_<^{\nu}(C,D)}|_{U_{j}}$ 
\begin{eqnarray*}
U_j\times \mathbb P^2 &\stackrel{\simeq}{\longrightarrow}& \overline{{\rm Con}_<^{\nu}(C,D)}|_{U_j} \\
	(u_{j} , (C_0:C_1:C_2)) &\mapsto& [C_0\cdot\nabla^j(u_{j})+C_1\cdot\Theta_1^j(u_{j})+C_2\cdot\Theta_2^j(u_{j})].
\end{eqnarray*} 
By construction, in the intersection $U_{0}\cap U_{\infty}$ they satisfy 
$$
\left\{ \begin{array}{ll}
\nabla^{\infty} = \nabla^{0} +\frac{\nu_1}{2}\cdot Z_1 \cdot \Theta_1^{0} + \frac{\nu_2}{2}\cdot Z_2 \cdot \Theta_2^{0} \\
\Theta_1^{\infty}= (Z_1)^2\cdot\Theta_1^{0}\\
\Theta_2^{\infty} = (Z_2)^2\cdot\Theta_2^{0}
\end{array} \right.
$$
Therefore, transition chart from $U_{\infty}\times \mathbb C^{3}$ to $U_{0}\times \mathbb C^{3}$ give us the following  cocycle 
\begin{eqnarray*}
\left(
\begin{array}{ccc} 
1&0&0\\
\frac{\nu_{1}}{2}\cdot Z_1&Z_1^{2} & 0   \\
\frac{\nu_{2}}{2}\cdot Z_2&0 & Z_2^{2}  
\end{array}
\right).
\end{eqnarray*}
Since $U_{0}\cup U_{\infty}$ covers the base $X_{<}\simeq \mathbb P^{1}\times \mathbb P^{1}$ minus two points, this cocycle completely determines the extension. This proves that  $\mathcal E_{1}^{\nu}$ corresponds to 
$$
\left(\frac{\nu_{1}}{2}, \frac{\nu_{2}}{2}\right)\in {\rm H}^{1}(\mathbb P^1_{z_1}\times \mathbb P^1_{z_2}, T^*(\mathbb P^1_{z_1}\times \mathbb P^1_{z_2}) )\simeq {\rm H}^{1}( \mathbb P^1_{z_1}, T^*\mathbb P^1_{z_1})\oplus {\rm H}^{1}(\mathbb P^1_{z_2}, T^*\mathbb P^1_{z_2} )\simeq \mathbb C^{2}.
$$
This concludes item $(2)$ of the statement.
\endproof

We deduce the following corollary.

\begin{cor}\label{trivial extension}
The $\mathbb P^{2}$-bundle  $Bun : \overline{{\rm Con}_<^{\nu}(C,D)} \longrightarrow X_<$ 
is the trivial extension
$$
\overline{{\rm Con}_<^{\nu}(C,D)} \simeq \mathbb P (T^*X_{<}\oplus \mathcal O_{X_{<}})
$$
if and only if $\nu_{1}=\nu_{2}=0$. 
\end{cor}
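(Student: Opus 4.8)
The plan is to read everything off from the identification of $\overline{{\rm Con}_<^{\nu}(C,D)}$ as $\mathbb P(\mathcal E_1^{\nu})$ obtained in Theorem \ref{main1}(2), where $\mathcal E_1^{\nu}$ is the extension of $\mathcal O_{X_<}$ by $T^*X_<$ whose class is $\left(\frac{\nu_1}{2},\frac{\nu_2}{2}\right)\in {\rm H}^1(X_<,T^*X_<)\simeq\mathbb C^2$. The forward implication is then immediate: if $\nu_1=\nu_2=0$ the extension class vanishes, so the defining sequence splits, $\mathcal E_1^{0}\simeq T^*X_<\oplus\mathcal O_{X_<}$, and projectivizing gives $\overline{{\rm Con}_<^{0}(C,D)}\simeq\mathbb P(T^*X_<\oplus\mathcal O_{X_<})$.

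For the converse I would argue by contraposition and, crucially, restrict to a fibre of one of the two rulings of $X_<\simeq\mathbb P^1_{z_1}\times\mathbb P^1_{z_2}$, in order to neutralize the line-bundle-twist ambiguity that is inherent in recovering a vector bundle from its projectivization. Assume $(\nu_1,\nu_2)\neq(0,0)$, say $\nu_1\neq0$, and restrict the extension to a fibre $F=\mathbb P^1_{z_1}\times\{pt\}$. Using the K\"unneth decomposition ${\rm H}^1(X_<,T^*X_<)\simeq {\rm H}^1(\mathbb P^1_{z_1},T^*\mathbb P^1_{z_1})\oplus {\rm H}^1(\mathbb P^1_{z_2},T^*\mathbb P^1_{z_2})$ of Theorem \ref{main1}, the restriction map to ${\rm H}^1(F,T^*X_<|_F)$ sends the class $\left(\frac{\nu_1}{2},\frac{\nu_2}{2}\right)$ to $\frac{\nu_1}{2}$ in the summand ${\rm H}^1(\mathbb P^1_{z_1},\Omega^1_{\mathbb P^1})\simeq\mathbb C$, the $\nu_2$-part dying because ${\rm H}^1(\mathbb P^1_{z_1},\mathcal O)=0$. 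Since $T^*X_<|_F\simeq\mathcal O(-2)\oplus\mathcal O$ and the unique nonsplit extension of $\mathcal O$ by $\mathcal O(-2)$ on $\mathbb P^1$ is $\mathcal O(-1)^{\oplus2}$ (the twist of the Euler sequence by $\mathcal O(-1)$), I get $\mathcal E_1^{\nu}|_F\simeq\mathcal O(-1)^{\oplus2}\oplus\mathcal O$, whereas $(T^*X_<\oplus\mathcal O_{X_<})|_F\simeq\mathcal O(-2)\oplus\mathcal O^{\oplus2}$.

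It then remains to observe that these two rank-$3$ bundles on $\mathbb P^1$ have non-isomorphic projectivizations. Two bundles over $\mathbb P^1$ yield isomorphic $\mathbb P^2$-bundles if and only if they differ by tensoring with a line bundle $\mathcal O(k)$, and comparing splitting types shows that $(k-1,k-1,k)$ can never be a permutation of $(-2,0,0)$. Hence any isomorphism $\overline{{\rm Con}_<^{\nu}(C,D)}\simeq\mathbb P(T^*X_<\oplus\mathcal O_{X_<})$ over $X_<$ would restrict to an isomorphism of the projective bundles over $F$, a contradiction; so $\nu_1=0$, and by the symmetric argument on the other ruling $\nu_2=0$ as well.

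I expect the main subtlety to be exactly this control of projectivizations: an abstract isomorphism of $\mathbb P^2$-bundles only determines $\mathcal E_1^{\nu}$ up to a line-bundle twist, so one cannot simply conclude ``$\mathbb P(\mathcal E_1^{\nu})$ trivial $\Rightarrow \mathcal E_1^{\nu}$ split'' without care. Restricting to a ruling fibre resolves this cleanly, reducing the question to the classification of bundles on $\mathbb P^1$; the two auxiliary inputs — that the restricted extension class is nonzero (via K\"unneth) and that the relevant nonsplit $\mathbb P^1$-extension is $\mathcal O(-1)^{\oplus2}$ — are routine. (Alternatively one could compare determinants to see that any such twisting line bundle $L$ satisfies $L^{\otimes3}\simeq\mathcal O_{X_<}$, hence $L\simeq\mathcal O_{X_<}$ since ${\rm Pic}(\mathbb P^1\times\mathbb P^1)$ is torsion-free, and then argue splitting of the extension; but the fibre computation is the most direct route.)
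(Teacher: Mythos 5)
Your proof is correct, and it is in fact more complete than what the paper offers. The paper presents this corollary as an immediate consequence of Theorem \ref{main1}(2) --- the extension class of $\mathcal E_1^{\nu}$ is $\left(\tfrac{\nu_1}{2},\tfrac{\nu_2}{2}\right)$, which vanishes exactly when $\nu_1=\nu_2=0$ --- and gives no further argument (the text following the corollary only describes the cocycle in the case $\nu_1\cdot\nu_2\neq 0$). What the paper leaves implicit is precisely the point you isolate: an isomorphism of $\mathbb P^2$-bundles $\mathbb P(\mathcal E_1^{\nu})\simeq\mathbb P(T^*X_<\oplus\mathcal O_{X_<})$ only determines $\mathcal E_1^{\nu}$ up to a line-bundle twist, so vanishing of the extension class is not, on its face, equivalent to triviality of the projectivized bundle. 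Your resolution --- restrict to a ruling fibre $F$, check via the K\"unneth decomposition that the restricted class is $\tfrac{\nu_1}{2}$, identify $\mathcal E_1^{\nu}|_F\simeq\mathcal O(-1)^{\oplus 2}\oplus\mathcal O$ against $(T^*X_<\oplus\mathcal O_{X_<})|_F\simeq\mathcal O(-2)\oplus\mathcal O^{\oplus 2}$, and observe that no twist by $\mathcal O(k)$ can match the splitting types --- is sound and is the cleanest way to close this gap. Two small remarks: the nonsplit extension $0\to\mathcal O(-2)\to\mathcal O(-1)^{\oplus 2}\to\mathcal O\to 0$ is the Euler sequence twisted by $\mathcal O(-2)$ (equivalently, the dual Euler sequence), not by $\mathcal O(-1)$; and your alternative determinant argument, as sketched, still needs a reason why an abstract isomorphism $\mathcal E_1^{\nu}\simeq T^*X_<\oplus\mathcal O_{X_<}$ forces the extension class to vanish --- this is not automatic for general extensions, but here it follows from $h^0$: the connecting map sends $1\in H^0(\mathcal O_{X_<})$ to the class, so $h^0(\mathcal E_1^{\nu})=0$ when $(\nu_1,\nu_2)\neq(0,0)$, whereas $h^0(T^*X_<\oplus\mathcal O_{X_<})=1$.
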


In fact, when $\nu_{1}\cdot \nu_{2}\neq 0$, then one easily see that all these $\mathbb C^{2}$-bundle are isomorphic between them, they correspond to the extension defined by the cocycle 
\begin{eqnarray*}
\left(
\begin{array}{ccc} 
1&0&0\\
z_1&z_1^{2} & 0   \\
z_2&0 & z_2^{2}  
\end{array}
\right).
\end{eqnarray*} 
This will be done in a geometric way in the next section.

\subsubsection{The moduli space ${\rm Con}_<^{\nu}(C,D)$ for $\nu_{1}\cdot \nu_{2}\neq 0$}\label{differente de zero}

When $\nu_{1}\cdot \nu_{2}\neq 0$ the kernel of the residual part of $\nabla\in {\rm Con}_<^{\nu}(C,D)$ over each $t_{i}$ has two distinct eigendirections 
$$
\ker({\rm Res_{t_{i}}}\nabla-\nu_i^{\pm}I) =  
{p_{i}^\pm(\nabla)}\in \mathbb P^{1}
$$
where $p_{i}^{\epsilon}$ corresponds to the eigenvalue $\nu_i^{\epsilon}$ for $\epsilon\in\{+,-\}$.

Let us denote by $\Delta\subset \mathbb P^1\times \mathbb P^1$  the diagonal and $S:=(\mathbb P^1\times \mathbb P^1)\backslash\Delta$ its complement.  Then one gets a mapping which associates, to each $\nabla \in {\rm Con}_<^{\nu}(C,D)$, the eigenvectors of its residual part 
\begin{eqnarray*}
\Par: {\rm Con}_<^{\nu}(C,D) &\longrightarrow& S^2\\
                     \nabla            &\mapsto&   \left(({p}_{1}^{+}(\nabla), {p}_{1}^{-}(\nabla)), ({p}_{2}^{+}(\nabla), {p}_{2}^{-}(\nabla))\right)              
\end{eqnarray*}
In the next theorem, we will prove that this map is an isomorphism. For each $\epsilon\in \{+,-\}$, we shall denote by $\tau^{\epsilon}$ the  projection corresponding to the directions $\{p_{1}^{\epsilon}, p_{2}^{\epsilon}\}$:

\begin{eqnarray*}
\tau^{\epsilon}:S^{2} &\longrightarrow& \mathbb P^1\times \mathbb P^1.\\
		((p_{1}^{+}, p_{1}^{-}), (p_{2}^{+}, p_{2}^{-})) &\mapsto& (p_{1}^{\epsilon}, p_{2}^{\epsilon})
\end{eqnarray*}

Similarly, for each $\epsilon\in\{+,-\}$, there is a projection $Bun^{\epsilon}$ corresponding to the respective parabolic structure associated to eigenvalues $\{\nu_{1}^{\epsilon}, \nu_{2}^{\epsilon}\}$.
\begin{eqnarray*}
Bun^{\epsilon}: {\rm Con}_<^{\nu}(C,D) &\longrightarrow& X_{<}.\\
		(E_{1}, \nabla, \textbf{p}) &\mapsto& (E_{1}, \textbf{p}^{\epsilon}(\nabla))
\end{eqnarray*}
Note that  ${\rm Con}_<^{\nu}(C,D)$ has been defined by assuming that if $(E_{1}, \nabla, \textbf{p})$ is a point inside it then $\textbf{p}$ corresponds to the eingenvalues $\{\nu_{1}^{+}, \nu_{2}^{+}\}$. This means that the above projection $Bun^{+}$ coincides with $Bun$ of previous sections. 

\begin{thm}\label{isomorphism affine}
If $\nu_{1}\cdot \nu_{2}\neq 0$ then $\Par: {\rm Con}_<^{\nu}(C,D) \longrightarrow S^2$ is an isomorphism between $\mathbb C^2$-affine bundles over $\mathbb P^1\times \mathbb P^1$
\[
\xymatrix { 
{\rm Con}_<^{\nu}(C,D)  \ar@{->}[dr]_{Bun^{\epsilon}} \ar@{->}[rr]^{\Par}_{\simeq}  &    & S^2 \ar@{->}[dl]^{{\tau}^\epsilon} \\
      &          \mathbb P^1\times \mathbb P^1 & &
}
\]
for each $\epsilon\in\{+, -\}$.
\end{thm}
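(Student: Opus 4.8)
The plan is to check that $\Par$ respects the two fibrations and restricts to an affine isomorphism on every fibre, and then to invoke the elementary fact that a morphism of affine bundles over a common base which is bijective fibrewise is an isomorphism.

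First I would record well-definedness and the commuting triangles. Because $\nu_1\cdot\nu_2\neq0$ we have $\nu_i^+\neq\nu_i^-$, so ${\rm Res}_{t_i}\nabla$ is regular semisimple; its two eigenlines $p_i^{\pm}(\nabla)$ are therefore distinct and depend algebraically on $\nabla$, which shows that $\Par$ is a morphism with image in $S^2$. Directly from the definitions one has $\tau^{\epsilon}\circ\Par=Bun^{\epsilon}$ for $\epsilon\in\{+,-\}$, so both triangles commute and the whole statement reduces to a fibrewise claim.

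Next I would analyse a single fibre of $Bun^{+}=\tau^{+}\circ\Par$ over a point $(E_1,\textbf{p}^{+})\in X_<$. The source fibre is the affine space $\nabla^0+\Hig_<^{\nu}(C,D)$, while the fibre of $\tau^{+}$ over $(p_1^+,p_2^+)$ is $(\mathbb P^1\setminus\{p_1^+\})\times(\mathbb P^1\setminus\{p_2^+\})$, parametrizing the opposite eigenlines $(p_1^-,p_2^-)$. Choosing at each $t_i$ a frame with $p_i^+=e_1$ and writing $\nabla=\nabla^0+\Theta$, the requirement that $\nabla$ keep the exponents $\nu$, the trace $\zeta$ and the parabolic $\textbf{p}^+$ forces ${\rm Res}_{t_i}(\Theta)(p_i^+)=0$ and ${\rm tr}(\Theta)=0$, so ${\rm Res}_{t_i}\Theta$ is nilpotent with a single free entry $b_i$. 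A short computation then gives $p_i^-(\nabla)=[-(c_i^0+b_i):\nu_i]$, where $c_i^0$ is the upper-triangular entry of ${\rm Res}_{t_i}\nabla^0$; in the affine chart $s_i$ of $\mathbb P^1\setminus\{p_i^+\}$ this is $s_i=-(c_i^0+b_i)/\nu_i$, affine in $b_i$ with slope $-1/\nu_i\neq0$. This is exactly the point where the hypothesis $\nu_1\cdot\nu_2\neq0$ is used.

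It then remains to see that $\Theta\mapsto(b_1,b_2)$ is a linear isomorphism $\Hig_<^{\nu}(C,D)\to\mathbb C^2$. Its kernel is the space of holomorphic traceless parabolic Higgs fields ${\rm H}^0(C,{\rm End}_0(E_1,\textbf{p}^+)\otimes\Omega^1_C)$, and since every parabolic bundle of $X_<$ is simple --- by Proposition \ref{not simple} the indecomposable non-simple bundles are the $L\oplus L^{-1}(w_\infty)$, never $E_1$ --- we have ${\rm End}_0(E_1,\textbf{p}^+)=0$, so the kernel vanishes; both spaces being two-dimensional, the map is an isomorphism. Composing, the fibre restriction of $\Par$ is $\nabla\leftrightarrow\Theta\mapsto(b_1,b_2)\mapsto(s_1,s_2)$, a product of affine isomorphisms, hence an affine isomorphism onto the fibre of $\tau^+$. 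A fibrewise bijective morphism of affine $\mathbb C^2$-bundles over the same base is an isomorphism, so $\Par$ is an isomorphism of varieties intertwining $Bun^+$ with $\tau^+$; since $\tau^-\circ\Par=Bun^-$ also holds by definition, it likewise intertwines $Bun^-$ with $\tau^-$. I expect the main obstacle to be the fibrewise computation of how the opposite eigenline $p_i^-$ moves under a parabolic Higgs field together with the invertibility of the residue map; everything can be cross-checked against the explicit universal family of Table \ref{basis}.
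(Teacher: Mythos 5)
Your proof is correct, but it follows a genuinely different route from the paper's. The paper's proof transports the fibre of $Bun$ to the space of Fuchsian systems via Proposition \ref{systems} and then uses the explicit universal family of Table \ref{basis}: it computes directly that $\nabla^0+c_1\Theta_1^0+c_2\Theta_2^0$ has negative eigenlines $p_i^-=(c_i:c_iz_i-\frac{\nu_i}{2})$ (formula (\ref{FormuleParc})), from which the affine-bundle isomorphism is read off. You instead argue intrinsically on each fibre: the difference $\Theta=\nabla-\nabla^0$ is a traceless, strongly parabolic Higgs field, so in a frame with $p_i^+=e_1$ its residues are strictly upper triangular with a single free entry $b_i$; the linear map $\Theta\mapsto(b_1,b_2)$ has kernel ${\rm H}^0(C,{\rm End}_0(E_1,\textbf{p}^+)\otimes\Omega^1_C)\simeq{\rm End}_0(E_1,\textbf{p}^+)=0$ by simplicity of the bundles in $X_<$ (your appeal to Proposition \ref{not simple}, or simply simplicity of $E_1$ itself), hence is a linear isomorphism by the dimension count; and the negative eigenline $[-(c_i^0+b_i):\nu_i]$ then moves affinely in $b_i$ with slope $-1/\nu_i\neq 0$. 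Both arguments are sound; what differs is what they buy. Yours is coordinate-free, isolates exactly where simplicity and the hypothesis $\nu_1\cdot\nu_2\neq0$ enter, and would survive in situations where no explicit universal family is available; the paper's explicit computation additionally yields the closed formula (\ref{FormuleParc}), which is reused later --- the substitution $c_i=\nu_i/2(z_i-\zeta_i)$ in Section \ref{Sec:Symplectic} is precisely how the symplectic form on the image of $\Par$ is obtained --- so its concreteness is not wasted. Two details worth tightening in your write-up: the space $\Hig_<^{\nu}(C,D)$ must be interpreted as the traceless, strongly parabolic Higgs fields (this is what the difference of two members of the moduli space actually is, and what your nilpotent-residue claim requires, despite the paper's looser notation ${\rm End}(E_1,\textbf{p})\otimes\Omega^1_C(D)$); and for $\epsilon=-$ you should remark that the identical fibrewise argument with $\textbf{p}^-$ in place of $\textbf{p}^+$ gives fibrewise affineness for the minus structures, not merely the set-theoretic intertwining $\tau^-\circ\Par=Bun^-$.
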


\proof
In order to prove the theorem we shall use the isomorphism
 $${\rm Syst}^{\theta}(C,D')\simeq {\rm Con_{<}^{\nu}}(C,D)$$
given by  Proposition \ref{systems}. We need to prove that the four eigenvectors $\{{\textbf{p}_1^{\pm}}(\nabla), {\textbf{p}_2^{\pm}}(\nabla)\}$ corresponding to the eigenvalues $\left\{\pm \frac{\nu_1}{2}, \pm \frac{\nu_2}{2}\right\}$ determine the connection.   

Since  $\nu_{1}\cdot \nu_{2}\neq 0$, one has two distinct eigenvectors over each singular point $t_1$ and $t_2$. Then we can assume, without loss of generality,  that ${\textbf{p}_1^{+}}(\nabla)=(1:z)$ and ${\textbf{p}_2^{+}}(\nabla)=(1:w)$. If ${\textbf{p}_i^{+}}(\nabla)=(0:1)$ for some $i=1,2$, the same argument works with $-\nu_i$ in place of $\nu_i$.

The fiber of $$Bun: {\rm Syst}^{\theta}(C,D') \longrightarrow X_<\simeq \mathbb P^1\times\mathbb P^1$$
over $\z\in U_0$ is the two dimensional affine space 
\begin{equation}\label{FormuleNablac}
\nabla_{\bc}(\z) = \nabla^{0}(\z) + c_{1}\cdot \Theta_{1}^{0}(\z)+ c_{2}\cdot\Theta_{2}^{0}(\z)
\end{equation}
varying $\bc=(c_1,c_2)\in\mathbb C^2$, where $\nabla_0$, $\Theta_{1}^{0}$ and $\Theta_{2}^{0}$ were given in Table \ref{basis}. Using the explicit expressions for them, we can compute the two eigenvectors of the kernel of 
$\nabla_{\bc}(\z)$ at each fiber over $t_1$ and $t_2$.  They are 
\begin{equation}\label{FormuleParc}
\Par\ :\ \nabla_{\bc}\ \mapsto\ \left\{\begin{matrix}
p_1^+=(1: z_1)&p_1^-=(c_1: c_1z_1-\frac{\nu_1}{2})\\
p_2^+=(1: z_2)& p_2^-=(c_2: c_2z_2-\frac{\nu_2}{2})
\end{matrix}\right.
\end{equation}
This proves  that  $\nabla_{\bc}(\z)$ is determined by its four residual eigendirections over $t_1$ and $t_2$.
\endproof

\begin{remark}\label{rk:characNabla0}The section $\nabla_0(\z)$ of $Bun^+$ is characterized by the property that $$\Par(\nabla_0)=\left\{\begin{matrix}
p_1^+=(1: z_1)&p_1^-=(0: 1)\\
p_2^+=(1: z_2)& p_2^-=(0: 1)
\end{matrix}\right.
$$
It thus coincides with a fiber of $Bun^-$.
\end{remark}

\begin{remark}
Now we would like to remark that there is an isomorphism between $S^2$ and $(Q\times Q^{*})\backslash \mathcal I$, where $Q\subset \mathbb P^3$ is the smooth quadric surface, $Q^*$ its dual variety and $\mathcal I$ is the incidence variety. As a consequence, we obtain the following isomorphism 
$$
{\rm Con}_<^{\nu}(C,D) \simeq (Q\times Q^{*})\backslash \mathcal I.
$$
In fact, let $\check{\mathbb P}^{3}$ be the set of hyperplanes in $\mathbb P^{3}$.   The incidence variety $\mathcal I$ is defined as
$$
\mathcal I=\{(q,T_rQ)\in Q\times Q^{*}\;;\;\; q\in T_rQ \}.
$$
We note that $Q$ and $Q^*$ are isomorphic. For instance, we can assume that $Q$ is defined by zero locus of $f:=x_{0}x_{3}-x_{1}x_{2}$. Its dual variety $Q^*$ can be identified with the image of the polar map $\phi_f:\mathbb P^3 \dashrightarrow \mathbb P^3$, which is defined by the derivatives of $f$. That is, $Q^*$ is just the image of $Q$ by a  linear map and it is determined by the same equation $f$ as $Q$. 

Via Segre embedding, we can identify $(Q\times Q^{*})\backslash \mathcal I$ with $(\mathbb P^{1}\times \mathbb P^{1})^{2}\backslash I$
where 
$$
I=\{ ((z_1,\zeta_1),(z_2,\zeta_2))\in (\mathbb P^{1}\times \mathbb P^{1})^{2} \;; \; z_1=z_2 \; \text{or}\; \zeta_1=\zeta_2\}.
$$
In addition, this last variety is isomorphic to $S^2$:
\begin{eqnarray*}
(\mathbb P^{1}\times \mathbb P^{1})^{2}\backslash I &\longrightarrow& S^2\\
   ((z_1,\zeta_1),(z_2,\zeta_2)) &\mapsto& ((z_1,z_2),(\zeta_1,\zeta_2)).
\end{eqnarray*}
\end{remark}

\subsection{Whole moduli space of connections}

In order to describe the whole moduli space, we have to introduce other open charts.

\subsubsection{Description of $\overline{{\rm Con}_>^{\nu}(C,D)}$} 
Recall that we have a stratification of the moduli space of simple parabolic bundles, see (\ref{strat}): 
\begin{eqnarray*}
{\rm Bun}_{w_{\infty}}(C,D) = X_<\sqcup X_>.
\end{eqnarray*}
Given $(E, \textbf{p})\in X_{>}$, we perform two elementary transformations $elm_{t_{1}, t_{2}}$ with center at $\textbf{p}$, followed by a twisting by $\mathcal O_{C}(-w_{\infty})$ in order to obtain a parabolic vector bundle $(\tilde{E}, \tilde{\textbf{p}})\in X_{<}$.  Such operation defines a mapping $\phi_{D}$ from $X_{>}$ to $X_{<}$ that sends the locus $\Gamma\subset X_{>}$ into the locus $\Gamma\subset X_{<}$. Identifying  $X_{>}$ and $X_{<}$ with $\mathbb P^{1}\times \mathbb P^{1}$ we can show that this map corresponds to the following one
\begin{eqnarray*}
 \phi_{D}: \mathbb P^{1}\times \mathbb P^{1} &\longrightarrow& \mathbb P^{1}\times \mathbb P^{1}\\
		(z_1,z_2)  &\mapsto& (z_2,z_1)
\end{eqnarray*}
(see \cite[Proposition 5.5]{Nestor} for details).

\begin{thm}\label{second open}
There is a fiber-preserving isomorphism $\Phi_{D}$:
\[
\xymatrix { 
\overline{{\rm Con}_>^{\nu}(C,D)} \ar@{->}[d] \ar@{->}[r]^{\Phi_{D}}_{\simeq}  &    \overline{{\rm Con}_<^{\lambda}(C,D)} \ar@{->}[d] \\
X_{>}  \ar@{->}[r]^{\phi_{D}}    &          X_{<}
}
\]
where $\lambda=(\lambda_{1}^{\pm}, \lambda_{2}^{\pm})$ with 
\begin{displaymath}
\left\{ \begin{array}{ll}
\lambda_{k}^{+} = \nu_{k}^{+}- 1/2\\
\lambda_{k}^{-} = \nu_{k}^{-}+1/2
\end{array} \right.
\end{displaymath}
for each $k\in\{1, 2\}$.
\end{thm}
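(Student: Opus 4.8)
The plan is to lift the base identification $\phi_D\colon X_>\to X_<$ of \cite[Proposition 5.5]{Nestor} to the total spaces of connections by performing the \emph{same} geometric operation directly on the connection. Given a triple $(E,\nabla,\textbf{p})\in{\rm Con}_>^{\nu}(C,D)$, where $E=L\oplus L^{-1}(w_\infty)$ and $\textbf{p}=\{p_1,p_2\}$, I would apply the two elementary transformations $elm_{t_1,t_2}$ centered at $p_1,p_2$ and then twist by a fixed rank one logarithmic connection $\xi$ on $\mathcal O_C(-w_\infty)$. This is exactly the recipe defining $\phi_D$, now decorated with a connection; being a (meromorphic) gauge transformation it transports $\nabla$ to a logarithmic connection $\tilde\nabla$ on the bundle $\tilde E=E_1$ underlying $\phi_D(E,\textbf{p})\in X_<$, and carries the center directions $p_k$ to the parabolic structure $\tilde{\textbf{p}}$ of the image. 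Because the construction acts affinely on each fiber $Bun^{-1}(E,\textbf{p})$---a base connection plus the two dimensional space of parabolic Higgs fields---it defines a fiber-preserving morphism $\Phi_D$ over $\phi_D$, and after projectivizing it extends over the boundary $\mathbb P\Hig$ to the $\mathbb P^2$-bundles $\overline{{\rm Con}}$.

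The crucial step is to read off the new local exponents. As computed in the proof of Proposition \ref{systems}, a positive elementary transformation at $t_k$ centered at an eigendirection lowers the corresponding eigenvalue by $1$ and leaves the other fixed. The two elementary transformations change $\det E=\mathcal O_C(w_\infty)$ by $\pm t_1\pm t_2$, while the subsequent twist by $\mathcal O_C(-w_\infty)$ subtracts $2w_\infty$; since we must land on $\det\tilde E=\mathcal O_C(w_\infty)$ and $t_1+t_2\sim 2w_\infty$, both transformations are forced to be positive. Centering at $p_k=p_k^+$ therefore replaces the exponents $(\nu_k^+,\nu_k^-)$ at $t_k$ by $(\nu_k^+-1,\nu_k^-)$, and the Residue Theorem on $\mathcal O_C(-w_\infty)$ forces ${\rm Res}_{t_1}(\xi)={\rm Res}_{t_2}(\xi)=\tfrac12$. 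As the twist adds ${\rm Res}_{t_k}(\xi)$ to both eigenvalues, the resulting exponents are
$$\lambda_k^+=\nu_k^+-\tfrac12,\qquad \lambda_k^-=\nu_k^-+\tfrac12,$$
exactly as asserted. In particular the trace residues $\lambda_k^++\lambda_k^-=\nu_k^++\nu_k^-$ are unchanged, so ${\rm tr}(\tilde\nabla)$ has the same residues as $\zeta$; using the one parameter freedom in $\xi$ (a torsor under $H^0(C,\Omega_C^1)\simeq\mathbb C$) one normalizes $\xi$ so that ${\rm tr}(\tilde\nabla)=\zeta$, whence $\tilde\nabla\in{\rm Con}_<^{\lambda}(C,D)$.

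Finally I would show $\Phi_D$ is an isomorphism by writing down its inverse: elementary transformations and twists are reversible, so performing the negative elementary transformations at $t_1,t_2$ centered at the new parabolics and twisting by $\xi^{-1}$ returns the original triple, which simultaneously gives the commutativity of the square. The affine-linear nature of the operation on each fiber then guarantees that the induced map of the compactifying $\mathbb P^2$-bundles is a fiberwise projective isomorphism. The main obstacle is not the construction but its global regularity: one must check that the a priori only meromorphic gauge transformation yields an everywhere holomorphic connection with polar divisor exactly $D$ and with the stated exponents at \emph{every} member of the family, including along the curve $\Gamma$ where the bundle type degenerates and on the boundary $\mathbb P\Hig$, and that the single normalization of $\xi$ restoring ${\rm tr}(\tilde\nabla)=\zeta$ is uniform in the family. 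Once this pointwise regularity and exponent/trace matching is secured, the commutative square and fiberwise bijectivity upgrade $\Phi_D$ to the desired fiber-preserving isomorphism.
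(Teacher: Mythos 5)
Your proposal is correct and takes essentially the same route as the paper's proof: two positive elementary transformations centred at $p_1^+,p_2^+$, followed by a twist by a rank-one logarithmic connection on $\mathcal O_C(-w_\infty)$ with residues $\tfrac12$ at $t_1,t_2$, the exponent bookkeeping $(\nu_k^+,\nu_k^-)\mapsto(\nu_k^+-1,\nu_k^-)\mapsto(\nu_k^+-\tfrac12,\nu_k^-+\tfrac12)$, and the inverse given by the negative elementary transformations and the reverse twist. One small inaccuracy worth noting: the Residue Theorem only forces $\mathrm{Res}_{t_1}(\xi)+\mathrm{Res}_{t_2}(\xi)=1$, not the symmetric split $\bigl(\tfrac12,\tfrac12\bigr)$; that split is a choice, dictated by the target exponents $\lambda$, and it is exactly the choice the paper makes.
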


\proof
The above isomorphism is given by performing a positive elementary transformation over $D=t_{1}+t_{2}$. In fact, given $(E, \nabla, \textbf{p}) \in {\rm Con}_>^{\nu}(C,D)$, let us  consider the composition of two positive elementary transformations 
\begin{eqnarray}\label{elm2}
el= elem^{+}_{t_{1}, t_{2}}: E \dashrightarrow E'
\end{eqnarray}
with center at $p_{1}^{+}$ and $p_{2}^{+}$, respectively. The birational map $el$ yields a new logarithmic connection $\nabla'$ on $E'$. We also shall fix a suitable
rank one logarithmic connection on $\mathcal O_C(-w_{\infty})$ with poles on $D$ satisfying the prescribed condition on residues:
$$
{\rm Res}_{t_k}(\zeta)=\frac{1}{2} 
$$
for $k\in \{1, 2\}$.
Twisting $(E', \nabla')$ by $(\mathcal O_{C}(-w_{\infty}), \zeta)$ one obtains a pair 
$$
(\tilde{E}, \tilde{\nabla}) = (E'\otimes \mathcal O_{C}(-w_{\infty}), \nabla'\otimes \zeta)
$$
where $\tilde{E}$ has  $\mathcal O_{C}(w_{\infty})$ as determinant line bundle and the resulting connection $\tilde{\nabla}$ has local exponents $\lambda$ as stated. 

Then we get a mapping $\Phi_{D}: {\rm Con}_>^{\nu}(C,D)  \longrightarrow {\rm Con}_<^{\lambda}(C,D)$ in which can be reversed by the respective negative elementary transformation followed by a twisting by $\mathcal O_{C}(w_{\infty})$. 
\endproof

\begin{cor}\label{maior}
The $\mathbb P^{2}$-bundle  $Bun : \overline{{\rm Con}_>^{\nu}(C,D)} \longrightarrow X_>$ is the trivial extension
$$
\overline{{\rm Con}_>^{\nu}(C,D)} = \mathbb P (T^*X_{>}\oplus \mathcal O_{X_{>}})
$$
if and only if $\nu_{k}^{+}-\nu_{k}^{-}=1$ for $k\in\{1, 2\}$.  If $\nu_{k}^{+} - \nu_{k}^{-}\neq 1$ for $k\in\{1, 2\}$, then ${\rm Con}_>^{\nu}(C,D)$ is isomorphic to $S^{2}$.
\end{cor}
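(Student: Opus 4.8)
The plan is to deduce this corollary directly from the fiber-preserving isomorphism $\Phi_{D}$ of Theorem \ref{second open}, combined with the structural results already obtained for the ``$<$'' chart, namely Corollary \ref{trivial extension} and Theorem \ref{isomorphism affine}. Recall that $\Phi_{D}$ identifies $\overline{{\rm Con}_>^{\nu}(C,D)}$, as a $\mathbb P^{2}$-bundle over $X_{>}$, with $\overline{{\rm Con}_<^{\lambda}(C,D)}$ as a $\mathbb P^{2}$-bundle over $X_{<}$, where $\lambda_{k}^{+}=\nu_{k}^{+}-1/2$ and $\lambda_{k}^{-}=\nu_{k}^{-}+1/2$. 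Since $\Phi_{D}$ covers the base isomorphism $\phi_{D}$, the extension class of one $\mathbb P^{2}$-bundle is carried onto that of the other, so the two bundles are trivial extensions simultaneously and their affine parts are isomorphic.

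First I would record that the new eigenvalue data $\lambda$ is again of the normalized shape (\ref{nu}). The trace exponents are unchanged, since $\lambda_{k}^{+}+\lambda_{k}^{-}=\nu_{k}^{+}+\nu_{k}^{-}$, so the fixed trace connection $\zeta$ and determinant $\mathcal O_{C}(w_{\infty})$ are preserved (as already noted in the proof of Theorem \ref{second open}); meanwhile the relevant difference of exponents becomes
$$
\lambda_{k}:=\lambda_{k}^{+}-\lambda_{k}^{-}=\nu_{k}^{+}-\nu_{k}^{-}-1 .
$$
Hence ${\rm Con}_<^{\lambda}(C,D)$ is a moduli space of exactly the type to which Theorem \ref{main1}, Corollary \ref{trivial extension} and Theorem \ref{isomorphism affine} apply, with the difference-parameters $(\lambda_{1},\lambda_{2})$ playing the role of $(\nu_{1},\nu_{2})$.

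The two assertions then follow by translation. By Corollary \ref{trivial extension}, the bundle $\overline{{\rm Con}_<^{\lambda}(C,D)}\to X_{<}$ is the trivial extension $\mathbb P(T^*X_{<}\oplus\mathcal O_{X_{<}})$ if and only if $\lambda_{1}=\lambda_{2}=0$, that is, $\nu_{k}^{+}-\nu_{k}^{-}=1$ for $k=1,2$; transporting this back along $\Phi_{D}$ gives the first claim for $\overline{{\rm Con}_>^{\nu}(C,D)}\to X_{>}$. For the second, if $\nu_{k}^{+}-\nu_{k}^{-}\neq 1$ for both $k$ then $\lambda_{1}\cdot\lambda_{2}\neq 0$, so Theorem \ref{isomorphism affine} (applied to $\lambda$) yields ${\rm Con}_<^{\lambda}(C,D)\simeq S^{2}$, and composing with the restriction of $\Phi_{D}$ to the affine locus gives ${\rm Con}_>^{\nu}(C,D)\simeq S^{2}$.

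The only point requiring some care, which I would treat as the main (mild) obstacle, is to confirm that $\Phi_{D}$ really transports the $\mathbb P^{2}$-bundle structure, and hence the extension class, rather than merely the underlying varieties. Concretely, one must check that the two positive elementary transformations at $p_{1}^{+},p_{2}^{+}$ and the twist by $(\mathcal O_{C}(-w_{\infty}),\zeta)$ defining $\Phi_{D}$ send the Higgs directions on the ``$>$'' side to those on the ``$<$'' side, compatibly with $\phi_{D}(z_{1},z_{2})=(z_{2},z_{1})$. Granting this compatibility, which is exactly the content of the fiber-preserving statement in Theorem \ref{second open}, the extension class is an isomorphism invariant and both parts of the corollary are immediate.
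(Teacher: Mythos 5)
Your proposal is correct and follows exactly the paper's own argument: the paper deduces the first assertion from Theorem \ref{second open} together with Corollary \ref{trivial extension}, and the second from Theorem \ref{isomorphism affine} applied to the shifted exponents $\lambda$. Your additional bookkeeping (computing $\lambda_k=\nu_k^+-\nu_k^--1$ and noting that $\Phi_D$ preserves the bundle structure) simply makes explicit what the paper leaves implicit.
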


\proof
First part of the statement is a consequence of Theorem \ref{second open} and Corollary \ref{trivial extension}. In the second part, we use Theorem \ref{isomorphism affine} to conclude the proof.  
\endproof

\subsubsection{Fiber over each indecomposable bundle}
In view of Corollary \ref{cor criterion}, any parabolic bundle arising in our moduli space ${\rm Con}^{\nu}(C,D)$ is indecomposable. There are two types of indecomposable parabolic bundles: simple and not simple. As we have seen in Section \ref{par}, the moduli space of simple parabolic bundle  having $\mathcal O_{C}(w_{\infty})$ as determinant line bundle is a union of two copies of $\mathbb P^{1} \times \mathbb P^{1}$
\begin{eqnarray*}
{\rm Bun}_{w_{\infty}}(C,D) = X_<\ \cup\  X_>
\end{eqnarray*}
identifying identical parabolic bundles outside a curve $\Gamma$ of type $(2,2)$. 
On the other hand, indecomposable not simple parabolic bundles were characterized in Proposition \ref{not simple}. There are eight different types: for each square root $L_{i,k}$ of $\mathcal O_{C}(w_{\infty}-t_{k})$, $k\in \{1,2\}$ and $i\in\{1,...,4\}$, there is a unique parabolic bundle up to automorphism 
\begin{eqnarray}\label{types}
\mathcal E_{i,k} =  (L_{i,k}\oplus L_{i,k}^{-1}(w_{\infty}), \textbf{p})
\end{eqnarray}
with parabolic directions as in Proposition \ref{not simple} (see Figure \ref{basepoint}).

Let $\mathcal C_{\mathcal E}\subset {\rm Con}^{\nu}(C,D)$ be the set of $\nu$-flat connections over a given parabolic bundle $\mathcal E =  (E, \textbf{p})$. The whole moduli space ${\rm Con}^{\nu}(C,D)$ is the union of $\mathcal C_{\mathcal E}$ when $\mathcal E$ runs over all indecomposable parabolic bundles. 

\begin{prop}\label{description}
Assume $\nu_{k}^{+}\neq \nu_{k}^{-}$ for each $k=1,2$. The moduli space $\mathcal C_{\mathcal E}$ of $\nu$-flat connections over a given indecomposable parabolic bundle $\mathcal E =  (E, \textbf{p})$ is a two dimensional affine space. 
\end{prop}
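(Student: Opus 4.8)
The plan is to realise $\mathcal{C}_{\mathcal{E}}$ as the quotient of an affine space of connections by the automorphism group of $(E,\textbf{p})$, and to pin down all the dimensions by Riemann--Roch. First I would fix an indecomposable $(E,\textbf{p})$; by the genericity hypothesis on $\nu$ together with Corollary~\ref{cor criterion} it is $\nu$-flat, so the set $\mathcal{A}$ of \emph{all} logarithmic connections on $(E,\textbf{p})$ with trace $\zeta$, exponents $\nu$ and $\mathrm{Res}_{t_k}(\nabla)(p_k)=\nu_k^+p_k$ is non-empty. Any two such connections differ by a parabolic Higgs field, so $\mathcal{A}$ is an affine space modelled on $\mathrm{H}^0(C,\mathcal{F})$, where $\mathcal{F}=\{\theta\in\mathcal{E}nd_0(E)\otimes\Omega^1_C(D)\ :\ \mathrm{Res}_{t_k}(\theta)(p_k)=0\}$ is exactly the sheaf appearing in Lemma~\ref{cohomology}. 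Since two connections represent the same point of the moduli space precisely when they are conjugate by an element of $\mathrm{Aut}(E,\textbf{p})$, we get $\mathcal{C}_{\mathcal{E}}=\mathcal{A}/\mathrm{Aut}(E,\textbf{p})$.

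Next I would compute $\dim \mathrm{H}^0(C,\mathcal{F})$. On the elliptic curve $\Omega^1_C\simeq\mathcal{O}_C$ and $\deg\mathcal{E}nd_0(E)=0$, so $\deg\big(\mathcal{E}nd_0(E)\otimes\Omega^1_C(D)\big)=3\cdot\deg D=6$. At each $t_k$ the space of traceless endomorphisms of $E|_{t_k}$ annihilating the line $p_k$ is one-dimensional inside $\mathfrak{sl}_2$, so the requirement $\mathrm{Res}_{t_k}(\theta)(p_k)=0$ imposes two independent conditions; hence $\deg\mathcal{F}=6-2\cdot 2=2$ and, by Riemann--Roch on the elliptic curve, $\chi(\mathcal{F})=\deg\mathcal{F}=2$. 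The isomorphism $\mathrm{H}^1(C,\mathcal{F})\simeq\mathrm{End}_0(E,\textbf{p})^*$ of Lemma~\ref{cohomology} uses only Serre duality and the trace pairing, so it carries over verbatim to our case $\det E=\mathcal{O}_C(w_\infty)$. Therefore $\dim \mathrm{H}^0(C,\mathcal{F})=2+\dim\mathrm{End}_0(E,\textbf{p})$.

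Third, I would take the automorphisms into account. The group $\mathrm{Aut}(E,\textbf{p})$ is an open subset of the affine space $\mathrm{End}(E,\textbf{p})$, of dimension $1+\dim\mathrm{End}_0(E,\textbf{p})$, and the scalars $\mathbb{C}^*\subset\mathrm{Aut}(E,\textbf{p})$ act trivially by conjugation. Because $\nu$ is generic, every connection in $\mathcal{A}$ is irreducible, so its only automorphisms are scalar; hence $\mathrm{Aut}(E,\textbf{p})/\mathbb{C}^*$, a group of dimension $\dim\mathrm{End}_0(E,\textbf{p})$, acts freely on $\mathcal{A}$. Counting dimensions,
$$
\dim\mathcal{C}_{\mathcal{E}}=\dim\mathcal{A}-\dim\mathrm{End}_0(E,\textbf{p})=\big(2+\dim\mathrm{End}_0(E,\textbf{p})\big)-\dim\mathrm{End}_0(E,\textbf{p})=2 .
$$

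It remains to see that $\mathcal{C}_{\mathcal{E}}$ is genuinely an affine \emph{space}. When $(E,\textbf{p})$ is simple we have $\mathrm{End}_0(E,\textbf{p})=0$, so $\mathcal{C}_{\mathcal{E}}=\mathcal{A}\cong\mathrm{H}^0(C,\mathcal{F})\cong\mathbb{C}^2$, recovering the fibres over $X_<$ and $X_>$ already obtained in Theorem~\ref{isomorphism affine} and Corollary~\ref{maior}. The only remaining case is the finite list of indecomposable non-simple bundles $\mathcal{E}_{i,k}$ of Proposition~\ref{not simple}, for which $\mathrm{End}_0(E,\textbf{p})=\mathbb{C}$ and the effective group is the additive group $\mathbb{G}_a=\big\{\left(\begin{smallmatrix}1&b\\0&1\end{smallmatrix}\right)\big\}$ acting freely on $\mathcal{A}\cong\mathbb{C}^3$. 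Here the main obstacle is that a free $\mathbb{G}_a$-quotient of affine $3$-space need not be affine $2$-space in general, so the argument must use the specific, tame nature of the action. I would resolve this by writing the conjugation action of $I+bN$ (with $N$ spanning $\mathrm{End}_0(E,\textbf{p})$) explicitly on $\nabla=\nabla^0+\sum_i c_i\Theta_i$: it translates the Higgs coordinates by an affine function of $b$ with nonzero linear part (this is freeness), so the orbits are parallel affine lines and a linear slice transverse to them provides a global chart $\mathcal{C}_{\mathcal{E}}\cong\mathbb{C}^2$, completing the proof.
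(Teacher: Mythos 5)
Your quotient set-up and dimension count are correct, and they follow a genuinely different route from the paper: the identification $\mathcal C_{\mathcal E}=\mathcal A/\mathrm{Aut}(E,\textbf{p})$, the computation $\dim \mathrm{H}^0(C,\mathcal F)=2+\dim\mathrm{End}_0(E,\textbf{p})$ via Riemann--Roch and the Serre-duality isomorphism of Lemma~\ref{cohomology} (which indeed does not use triviality of the determinant), and the simple case are all fine. For the non-simple bundles of Proposition~\ref{not simple}, the paper argues differently: it uses transitivity of the two-dimensional group $\mathrm{Aut}(E,\textbf{p})$ on directions outside $L^{-1}(w_\infty)\vert_{t_2}$ to fix the $\nu_2^-$-eigendirection $q_2$ once and for all, then shows via the map $\varphi_\nabla:\mathcal O_C\to\mathcal O_C(t_2)$ and the fact that $\vert t_2\vert$ is a single divisor on an elliptic curve that connections with this normalized datum are exactly the connections on the \emph{simple} parabolic bundle $\mathcal E_-=(E,\{p_1,q_2\})$, whence $\mathcal C_{\mathcal E}\simeq \mathrm{H}^0(C,\mathrm{End}(\mathcal E_-)\otimes\Omega^1_C(D))\simeq\mathbb C^2$.

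The gap is in your last step, exactly where you place the weight of the proof. Conjugation by $\phi_b=I+bN$ does \emph{not} translate the Higgs coordinates: using $N^2=0$ one finds, for $\nabla=\nabla_0+\theta$,
$$\phi_b\nabla\phi_b^{-1}=\nabla-b\left(\Psi_0+[\theta,N]\right)-b^2N\left(\Psi_0+[\theta,N]\right),\qquad \Psi_0:=\nabla_0^{\mathrm{End}}(N),$$
so the action on $\theta\in \mathrm{H}^0(C,\mathcal F)$ has a nontrivial linear part and a genuine $b^2$-term. Indeed $\mathrm{ad}_N$ does not vanish on $\mathrm{H}^0(C,\mathcal F)$: the holomorphic Higgs field $\theta_*=\omega\otimes(\pi_L-\pi_{L^{-1}(w_\infty)})$, with $\omega$ a nonzero holomorphic $1$-form and $\pi$ the projections of the splitting, lies in $\mathrm{H}^0(C,\mathcal F)$ and satisfies $[\theta_*,N]=-2\,\omega\otimes N\neq0$. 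Moreover $N\Psi_0\neq 0$: its only possibly nonzero component is $\gamma^2\beta_0$, where $\beta_0$ is the component of $\nabla_0$ measuring non-invariance of $L^{-1}(w_\infty)$, and $\beta_0\neq0$ because your genericity hypothesis makes $\nabla_0$ irreducible. Since $\Psi_0$ has nonzero diagonal part $\beta_0\gamma$ while $N\Psi_0$ is strictly lower triangular, the orbit $\{\nabla_0-b\Psi_0-b^2N\Psi_0\}$ is a parabola, not a line; the orbits are neither lines nor parallel, so ``a linear slice transverse to them'' is not automatically a chart. (Note also that the basis $\{\nabla^0,\Theta_1^0,\Theta_2^0\}$ of Table~\ref{basis} is unavailable here: these bundles are $L\oplus L^{-1}(w_\infty)$, not $E_1$, and $\mathrm{H}^0(C,\mathcal F)$ is three-dimensional.) The scheme can be repaired, but only by the computation you skip: every $\theta\in \mathrm{H}^0(C,\mathcal F)$ turns out to have vanishing $\mathrm{Hom}(L^{-1}(w_\infty),L)$-component, so the diagonal coefficient of $\theta$ transforms as $\alpha\mapsto\alpha-b\,\beta_0\gamma$, affinely in $b$ with nonzero slope and no $b^2$-contribution, and the affine slice $\{\alpha=0\}$ then meets each orbit exactly once. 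Either that verification, or the paper's normalization of $q_2$, is needed; as written, the non-simple case is not proved.
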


\proof
If $\mathcal E$ belongs to either $X_{<}$ or $X_{>}$ then $\mathcal E$ is simple. 
Since only automorphisms preserving parabolics are scalar, $\mathcal C_{\mathcal E}$ can be identified with the two dimensional affine space
$$
\mathcal C_{\mathcal E} \simeq {\rm H}^0(C,{\rm End}\mathcal E\otimes 	\Omega_C^1(D)).
$$
Now let $\mathcal E=\mathcal E_{i,k}$  as in (\ref{types}) be an indecomposable parabolic bundle but not simple. 
For sake of simplicity, one may assume that $\mathcal E=(L\oplus L^{-1}(w_{\infty}), \textbf{p})$ with $$L^{2} =\mathcal O_{C}(w_{\infty}-t_{1}).$$ Where the parabolic direction $\textbf{p} = (p_{1}, p_{2})$ satisfies
\begin{displaymath}
\left\{ \begin{array}{ll}
p_{1} \nsubseteq  L_{t_{1}} \,\, \text{and} \,\, p_{1} \nsubseteq L^{-1}(w_{\infty})_{t_{1}}\\
p_{2} \subset L^{-1}(w_{\infty})_{t_{2}}
\end{array} \right.
\end{displaymath}
like Figure \ref{basepoint}. By our hypothesis on $\nu$,   any $\nu$-flat connection $\nabla\in\mathcal C_{\mathcal E}$ has two distinct eigendirections over each $t_{k}$, for each $k=1,2$
$$
\Par({\rm Res_{t_{k}}}\nabla) =  {p_{k}^\pm(\nabla)}.
$$
Besides that, the ``positive'' directions coincide with $\textbf{p}$, that is, 
\begin{displaymath}
\left\{ \begin{array}{ll}
p_{1}^+(\nabla) = p_{1} \\
p_{2}^{+}(\nabla) = p_{2}. 
\end{array} \right.
\end{displaymath}
Also we note that since $p_{2} \subset L^{-1}(w_{\infty})_{t_{2}}$  then $p_{2}^-(\nabla) \nsubseteq L^{-1}(w_{\infty})_{t_{2}}$. We set 
$$q_{2}=p_{2}^-(\nabla)$$ 
to simplify the notation. Therefore, each element $\nabla\in\mathcal C_{\mathcal E}$ defines a connection on $L\oplus L^{-1}(w_{\infty})$ satisfying
 \begin{eqnarray}\label{-}
\left\{ \begin{array}{ll}
{\rm Res}_{t_1}(\nabla)(p_{1}) = \nu_1^+ \cdot p_{1} \\
{\rm Res}_{t_2}(\nabla)(q_{2}) = \nu_2^- \cdot q_{2}
\end{array} \right.
\end{eqnarray}
where 
\begin{displaymath}
\left\{ \begin{array}{ll}
p_{1} \nsubseteq  L_{t_{1}} \,\, \text{and} \,\, p_{1} \nsubseteq L^{-1}(w_{\infty})_{t_{1}}\\
q_{2} \nsubseteq L^{-1}(w_{\infty})_{t_{2}}.
\end{array} \right.
\end{displaymath}
A priori $q_{2}=p_{2}^-(\nabla)$ depends of $\nabla$. But since $p_{2}$ lies in the maximal subbundle $ L^{-1}(w_{\infty})$, the automorphism group of $E=L\oplus L^{-1}(w_{\infty})$ fixing parabolic $\textbf{p}=\{p_{1}, p_{2}\}$ is two dimensional
\begin{eqnarray*}
{\rm Aut}(E,\textbf{p}) = \left\{
\left(
\begin{array}{ccc} 
a & 0  \\
\gamma & a  \\
\end{array}
\right)
\;;\;\; a\in\mathbb C^{*},\; \gamma\in {\rm H}^0(C,\mathcal O_{C}(t_{1}))
\right\}.
\end{eqnarray*}
Hence, we may suppose that all the connections $\nabla \in \mathcal C_{\mathcal E}$ have the same eigendirection $q_{2}$ outside $L^{-1}(w_{\infty})$.

Reciprocally, given a connection $\nabla$ on $L\oplus L^{-1}(w_{\infty})$ satisfying $(\ref{-})$, we will show that $\nabla\in \mathcal C_{\mathcal E}$. In fact, suppose $\nabla$ is a connection on $(E, \{p_{1}, q_{2}\})$  satisfying (\ref{-}) and
\begin{displaymath}
\left\{ \begin{array}{ll}
p_{1} \nsubseteq  L_{t_{1}} \,\, \text{and} \,\, p_{1} \nsubseteq L^{-1}(w_{\infty})_{t_{1}}\\
q_{2} \nsubseteq L^{-1}(w_{\infty})_{t_{2}}.
\end{array} \right.
\end{displaymath}
We will prove that the second eigendirection of the residual part of $\nabla$ at $t_{2}$ which corresponds to $\nu_{2}^{+}$ lies in $L^{-1}(w_{\infty})$. In order to prove it, let us consider the apparent map with respect to $\mathcal L=L^{-1}(w_{\infty})$
$$
 \mathcal L \hookrightarrow   E \stackrel{\nabla}{\longrightarrow}  E\otimes\Omega_C^1(t_{1}+t_{2}) \longrightarrow (E/\mathcal L)\otimes \Omega_C^1(t_{1}+t_{2})=L(t_{1}+t_{2}).
$$
The zero divisor of the corresponding $\mathcal O_{C}$-linear map 
$$
\varphi_{\nabla}: \mathcal O_C\longrightarrow \mathcal L^{-1}\otimes L(t_{1}+t_{2})=\mathcal O_{C}(t_{2})
$$
defines an element of the linear system $\mathbb P{\rm {H}}^0(C,\mathcal O_{C}(t_{2}))=|t_{2}|$. This means that its zero divisor is exactly $t_{2}$, because our curve is elliptic. Consequently,  the residual part of $\nabla$ has  an eigendirection, say $p_{2}$, lying in  $L^{-1}(w_{\infty})$. Since $q_{2}$ corresponds to eigenvalue $\nu_{2}^{-}$, then $p_{2}$ corresponds to  $\nu_{2}^{+}$. It is enough to prove that $\nabla\in \mathcal C_{\mathcal E}$. 

Let us denote by $\mathcal E_{-}$ the parabolic bundle obtained by taking $q_{2}$ instead of $p_{2}$
$$
\mathcal E_{-} = (  L\oplus L^{-1}(w_{\infty}), \{p_{1}, q_{2}\} ).
$$
We have showed above that each $\nabla\in \mathcal C_{\mathcal E}$ can be seen as a connection on the parabolic bundle $\mathcal E_{-}$ where
\begin{eqnarray*}
\left\{ \begin{array}{ll}
{\rm Res}_{t_1}(\nabla)(p_{1}) = \nu_1^+ \cdot p_{1} \\
{\rm Res}_{t_2}(\nabla)(q_{2}) = \nu_2^- \cdot q_{2}
\end{array} \right.
\end{eqnarray*}
and vice versa.  Then $\mathcal C_{\mathcal E}$ can be identified with the affine vector space 
$$
\mathcal C_{\mathcal E} \simeq {\rm H}^0(C,{\rm End}(\mathcal E_{-})\otimes 	\Omega_C^1(D)).
$$
As we know $\mathcal E_{-}$ is simple, then the above cohomology group is a two dimensional vector space.  
\endproof

\subsubsection{Patching open charts}\label{sec:WholeModuli}

Let us suppose $\nu_{k}^{+}\neq \nu_{k}^{-}$ for each $k=1,2$. Given an element $(E, \nabla, \textbf{p}) \in {\rm Con}^{\nu}(C,D)$, the parabolic direction $\textbf{p}$ is nothing but the eigendirection for the residual part of $\nabla$ with respect to $\nu_{k}^{+}$. Under our hypothesis on $\nu$, the parabolic data is actually uniquely defined by the connection itself. Then we shall write just $(E,\nabla)$ instead of $(E, \nabla, \textbf{p})$. Each $\nabla$ has two pairs of ``positive'' and ``negative'' eigendirections 
\begin{eqnarray*}
\left\{ \begin{array}{ll}
{\textbf{p}}^{+}_{\nabla} = (p_{1}^{+}(\nabla), p_{2}^{+}(\nabla))\\
{\textbf{p}}^{-}_{\nabla} = (p_{1}^{-}(\nabla), p_{2}^{-}(\nabla))
\end{array} \right.
\end{eqnarray*}
defined by the eigenvalues $\nu_{k}^{+}$ and $\nu_{k}^{-}$, respectively. 

The moduli space ${\rm Con}^{\nu}(C,D)$ admits two forgetful maps with values in the moduli space of simple parabolic bundles
\begin{eqnarray*}
Bun^{+}: {\rm Con}^{\nu}(C,D) &\dashrightarrow& Bun(C,D)\\
		(E, \nabla) &\mapsto& (E, \textbf{p}^+_{\nabla})
\end{eqnarray*}
and 
\begin{eqnarray*}
Bun^{-}: {\rm Con}^{\nu}(C,D) &\dashrightarrow& Bun(C,D).\\
		(E, \nabla) &\mapsto& (E, \textbf{p}^-_{\nabla})
\end{eqnarray*}
They are complementary in the following sense. Indetermination points of $Bun^{+}$ are not indetermination points of $Bun^{-}$ and vice versa. Indeed, indetermination points of $Bun^{+}$ are of the form $(E, \nabla)$ such that $(E,\textbf{p}^{+}_{\nabla})$ is not simple. There are eight indecomposable not simple parabolic bundles. It follows from Proposition \ref{description} that the indetermination points of $Bun^{+}$ is a union of  eight two dimensional affine spaces. But we could define our moduli space of connections by using $\nu_{k}^{-}$ instead of $\nu_{k}^{+}$. This description for indetermination points of $Bun^{+}$ is completely symmetric for $Bun^{-}$. In fact, indetermination points of $Bun^{-}$ are of the form $(E, \nabla)$ such that $(E,\textbf{p}^{-}_{\nabla})$ is not simple.  Besides that, there is an open set of ${\rm Con}^{\nu}(C,D)$ where fibers of $Bun^{+}$ and $Bun^{-}$ are transverse to each other.

In order to describe our moduli space, one define the following open subsets
$$\begin{matrix}
\mathcal C&:=&\left\{ (E,\nabla) \in {\rm Con}^{\nu}(C,D)\,;\; (E, \textbf{p}^{+}_{\nabla})\in X_{<} \right\}\\
&\hfill =&\left\{ (E,\nabla) \in {\rm Con}^{\nu}(C,D)\,;\; (E, \textbf{p}^{-}_{\nabla})\in X_{<} \right\}\\
\mathcal C_{>}^+&:=&\left\{ (E,\nabla) \in {\rm Con}^{\nu}(C,D)\,;\; (E, \textbf{p}^{+}_{\nabla})\in X_{>} \right\}\\
\mathcal C_{>}^-&:=&\left\{ (E,\nabla) \in {\rm Con}^{\nu}(C,D)\,;\; (E, \textbf{p}^{-}_{\nabla})\in X_{>} \right\}
\end{matrix}$$
Note that $\mathcal C = {\rm Con}_<^{\nu}(C,D)$ and $\mathcal C_{>}^+ = {\rm Con}_>^{\nu}(C,D)$. 
The union $\mathcal C \cup \mathcal C_{>}^+$ is the biggest  open set where $Bun^{+}$ is well defined, i.e. its domain of definition is
$$
Dom(Bun^{+})= \mathcal C \cup \mathcal C_{>}^+
$$
We would like to remark that $(E, \textbf{p}^{+}_{\nabla})\in X_{<}$ if, and only if $(E, \textbf{p}^{-}_{\nabla})\in X_{<}$, because $X_{<}$ corresponds to parabolic bundles with underlying vector bundle $E=E_{1}$.  Then $\mathcal C \cup \mathcal C_{>}^-$ is the domain of definition of $Bun^{-}$
$$
Dom(Bun^{-})= \mathcal C \cup \mathcal C_{>}^-.
$$
Applying Corollary \ref{maior} with $\nu_{k}^{-}$ in place of $\nu_{k}^{+}$, one obtains the same statement for $\mathcal C_{>}^-$ instead of $\mathcal C_{>}^+$.  Hence, $Bun^{-}: \mathcal C \longrightarrow X_{>}$ is an affine $\mathbb C^{2}$-bundle over $X_{>}$. We summarize the above discussion and previous results in the following theorem. We still denote by $\Delta\subset \mathbb P^1\times \mathbb P^1$  the diagonal and $S:=(\mathbb P^1\times \mathbb P^1)\backslash\Delta$ its complement.

\begin{thm}\label{main2}
Assume $\nu_1^{\epsilon_1}+\nu_2^{\epsilon_2} \notin \mathbb Z$ for any $\epsilon_{k}\in \{+,-\}$. The moduli space  ${\rm Con}^{\nu}(C,D)$ is a union of three affine $\mathbb C^{2}$-bundles over $\mathbb P^{1}\times \mathbb P^{1}$
$$
{\rm Con}^{\nu}(C,D)= \mathcal C \cup \mathcal C_{>}^+ \cup \mathcal C_{>}^-.
$$
The intersection $\mathcal U := \mathcal C \cap \mathcal C_{>}^+ \cap \mathcal C_{>}^-$ is the set of pairs $(E, \nabla)$ where $E=E_{1}$ and there is no degree zero subbundle of $E_{1}$ passing through neither $\textbf{p}^{+}$ nor $\textbf{p}^{-}$. Moreover, there are fiber-preserving isomorphisms between affine $\mathbb C^{2}$-bundle over $\mathbb P^{1}\times \mathbb P^{1}$
$$
\Phi_{D}^{+}: \mathcal C_{>}^+\longrightarrow {\rm Con}_<^{\lambda}(C,D) \;\;\;\text{and}  \;\;\;\Phi_{D}^-: \mathcal C_{>}^-\longrightarrow {\rm Con}_<^{\gamma}(C,D)
$$
where $\lambda=(\lambda_{1}^{\pm}, \lambda_{2}^{\pm})$ and $\gamma=(\gamma_{1}^{\pm}, \gamma_{2}^{\pm})$  satisfy
\begin{displaymath}
\left\{ \begin{array}{ll}
\lambda_{k}^{+} = \nu_{k}^{+}- 1/2\\
\lambda_{k}^{-} = \nu_{k}^{-}+1/2
\end{array} \right.
\;\;
\text{and}
\;\;
\left\{ \begin{array}{ll}
\gamma_{k}^{+} = \nu_{k}^{+}+ 1/2\\
\gamma_{k}^{-} = \nu_{k}^{-}-1/2
\end{array} \right.
\end{displaymath}
for each $k\in\{1, 2\}$.
\end{thm}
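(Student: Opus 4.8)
The plan is to prove the three assertions in turn: that the three listed opens are affine $\mathbb C^2$-bundles over $\mathbb P^1\times\mathbb P^1$ and cover ${\rm Con}^{\nu}(C,D)$; that their common intersection is the stated set; and finally to produce the isomorphisms $\Phi_D^{\pm}$ with the claimed exponents. For the bundle structures I would invoke earlier results directly. The piece $\mathcal C={\rm Con}_<^{\nu}(C,D)$ is an affine $\mathbb C^2$-bundle over $X_<\simeq\mathbb P^1\times\mathbb P^1$ by the fibre computation preceding Theorem \ref{main1} together with Theorem \ref{isomorphism affine}, while $\mathcal C_>^+={\rm Con}_>^{\nu}(C,D)$ is one over $X_>\simeq\mathbb P^1\times\mathbb P^1$ by Theorem \ref{second open} combined with Corollary \ref{maior}. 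The third piece $\mathcal C_>^-$ is handled by the $+\leftrightarrow-$ symmetric version of the same construction, giving an affine $\mathbb C^2$-bundle over $X_>$ as well.

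Next I would establish the covering. By Corollary \ref{cor criterion}, every $(E,\nabla)\in{\rm Con}^{\nu}(C,D)$ has indecomposable underlying bundle $(E,\textbf{p}^+_{\nabla})$; when it is simple it lies in $X_<\sqcup X_>$, placing $(E,\nabla)$ in $\mathcal C\cup\mathcal C_>^+=Dom(Bun^+)$. It remains to absorb the complement, i.e.\ the locus where $(E,\textbf{p}^+_{\nabla})$ is not simple, into $\mathcal C_>^-$. By Proposition \ref{not simple} such a bundle is $(L\oplus L^{-1}(w_\infty),\textbf{p}^+_{\nabla})$ with $L^2=\mathcal O_C(w_\infty-t_k)$; taking $k=1$, so that $p_2^+\subset L^{-1}(w_\infty)$ and $p_1^+$ lies off both factors, the analysis in the proof of Proposition \ref{description} shows that $q_2:=p_2^-(\nabla)\not\subset L^{-1}(w_\infty)$. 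The hard part is to verify that $(E,\textbf{p}^-_{\nabla})=(L\oplus L^{-1}(w_\infty),\{p_1^-(\nabla),q_2\})$ is then simple. Applying Proposition \ref{not simple} once more, a non-simple structure on this same $E$ would require one parabolic on the unique maximal subbundle $L^{-1}(w_\infty)$ together with a torsion condition $L^2=\mathcal O_C(w_\infty-t_j)$ with the parabolic at $t_j$ off both factors; since $q_2$ is off $L^{-1}(w_\infty)$, either $j=1$, forcing $q_2\subset L^{-1}(w_\infty)$ (impossible), or $j=2$, forcing $L^2=\mathcal O_C(w_\infty-t_2)$, contradicting $L^2=\mathcal O_C(w_\infty-t_1)$ since distinct points of an elliptic curve are never linearly equivalent. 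Hence $(E,\textbf{p}^-_{\nabla})$ is simple, and being decomposable it lies in $X_>$, so $(E,\nabla)\in\mathcal C_>^-$ and the covering is complete.

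For the triple intersection I would unwind the membership conditions, recalling that $X_<$ parametrizes all $(E_1,\textbf{p})$, whereas $X_>$ parametrizes the decomposable bundles together with those $(E_1,\textbf{p})$ admitting no degree-zero subbundle through both parabolics, the two charts being glued along this common locus. Thus $(E,\nabla)\in\mathcal C$ forces $E=E_1$; intersecting further with $\mathcal C_>^+$ imposes that no degree-zero subbundle of $E_1$ passes through $\textbf{p}^+$, and intersecting with $\mathcal C_>^-$ imposes the same for $\textbf{p}^-$. This yields precisely the set $\mathcal U$ described in the statement.

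Finally I would produce the two isomorphisms. The map $\Phi_D^+$ is Theorem \ref{second open} verbatim, since $\mathcal C_>^+={\rm Con}_>^{\nu}(C,D)$; the shift $\lambda_k^+=\nu_k^+-\tfrac12$, $\lambda_k^-=\nu_k^-+\tfrac12$ records a positive elementary transformation centred at $\textbf{p}^+$ (lowering $\nu_k^+$ by one) followed by the twist of residue $+\tfrac12$ at each pole. For $\Phi_D^-$ I would run the identical construction with $\textbf{p}^-$ as the centre of the two elementary transformations; this interchanges the roles of the two eigenvalues at each $t_k$, so that $\nu_k^-$ is lowered by one and the twist again adds $\tfrac12$, giving $\gamma_k^+=\nu_k^++\tfrac12$, $\gamma_k^-=\nu_k^--\tfrac12$ and a fibre-preserving isomorphism over $\phi_D$ onto ${\rm Con}_<^{\gamma}(C,D)$. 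Both being isomorphisms of affine $\mathbb C^2$-bundles, the proof is complete. The principal obstacle is the simplicity verification for $(E,\textbf{p}^-_{\nabla})$ in the second paragraph, which is where the elliptic geometry (non-equivalence of $t_1,t_2$) genuinely enters.
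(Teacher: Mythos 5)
Your proof follows the same architecture as the paper's: the bundle structures come from Theorem \ref{main1}, Theorem \ref{isomorphism affine}, Theorem \ref{second open} and Corollary \ref{maior} (with the $+\leftrightarrow-$ symmetric versions); the covering is reduced, via Corollary \ref{cor criterion} and the stratification ${\rm Bun}_{w_{\infty}}(C,D)=X_<\sqcup X_>$, to the locus where $(E,\textbf{p}^{+}_{\nabla})$ is not simple, which is identified by Proposition \ref{not simple}; the intersection is unwound from the definitions exactly as in the paper; and $\Phi_D^{\pm}$ are the elementary-transformation isomorphisms of Theorem \ref{second open} and its mirror, with the correct exponent bookkeeping. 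The one place you deviate is the step you yourself call the hard part: the paper shows directly that \emph{both} $p_1^-(\nabla)$ and $p_2^-(\nabla)$ lie off $L^{-1}(w_\infty)$ (for the parabolic at the pole $t_k$ with $L^2=\mathcal O_C(w_\infty-t_k)$ this rests on the apparent-map computation inside the proof of Proposition \ref{description}, which shows $L^{-1}(w_\infty)$ is not residue-invariant there), and concludes $(E,\textbf{p}^{-}_{\nabla})\in X_>$; you instead use only $q_2\not\subset L^{-1}(w_\infty)$ and try to force simplicity through Proposition \ref{not simple}.

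At precisely that step there is a gap. Proposition \ref{not simple} classifies \emph{indecomposable} non-simple parabolic bundles, so your dichotomy ``simple, or of the shape in Proposition \ref{not simple}'' is incomplete: a \emph{decomposable} parabolic structure on $E=L\oplus L^{-1}(w_\infty)$ (for instance both negative parabolics on a common embedding of $L$, or one on $L^{-1}(w_\infty)$ and the other on an embedding of $L$) is non-simple, satisfies neither of the two alternatives you rule out, and would not lie in $X_>$, so the covering argument as written does not close. The repair is one line and uses a symmetry you already exploit elsewhere: $\nabla$ exhibits $(E,\textbf{p}^{-}_{\nabla})$ as flat for the exponents obtained by exchanging $\nu_k^{+}\leftrightarrow\nu_k^{-}$, and the hypothesis $\nu_1^{\epsilon_1}+\nu_2^{\epsilon_2}\notin\mathbb Z$ is invariant under this exchange, so Corollary \ref{cor criterion} gives indecomposability of $(E,\textbf{p}^{-}_{\nabla})$ as well (you invoked it only for $\textbf{p}^{+}_{\nabla}$). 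With that supplied, your application of Proposition \ref{not simple} is legitimate, the elliptic-curve obstruction $\mathcal O_C(w_\infty-t_1)\not\simeq\mathcal O_C(w_\infty-t_2)$ correctly kills the remaining alternative, and simplicity plus decomposability of $E$ places the bundle in $X_>$; note the same symmetric use of Corollary \ref{cor criterion} is also implicitly needed in the paper's proof, since membership in $X_>$ requires that the two negative parabolics do not lie on a common embedding of $L$.
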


\proof
The hypothesis $\nu_1^{\epsilon_1}+\nu_2^{\epsilon_2} \notin \mathbb Z$  implies that only indecomposable parabolic bundles appears in our moduli space (see Corollary \ref{cor criterion}). 
If $(E, \nabla)$ does not belong to $\mathcal C \cup \mathcal C_{>}^+$, then $(E, \nabla, \textbf{p}^{+}_{\nabla})$ is not simple. That is, $(E, \textbf{p}) =\mathcal E_{i,k}$ as in $(\ref{types})$  and exactly one of the parabolics $p_{1}^{+}(\nabla)$ or $p_{2}^{+}(\nabla)$ lies in the maximal subbundle $L^{-1}(w_{\infty})$ according with the square root $L_{i,k}$ of $\mathcal O_{C}(w_{\infty}-t_{i})$ (see Figure \ref{basepoint}). In particular, both parabolics $p_{1}^{-}(\nabla)$ and $p_{2}^{-}(\nabla)$ do not lie in $L^{-1}(w_{\infty})$ and then 
$$
(E,\{p_{1}^{-}(\nabla), p_{2}^{-}(\nabla)\})\in X_{>}.
$$
This implies that $(E, \nabla)\in \mathcal C_{>}^-$ and therefore one concludes that 
$$
{\rm Con}^{\nu}(C,D)= \mathcal C \cup \mathcal C_{>}^+ \cup \mathcal C_{>}^-.
$$
Now, let us describe the intersection $\mathcal U=\mathcal C \cap \mathcal C_{>}^+ \cap \mathcal C_{>}^-$. 
If $(E, \nabla)\in \mathcal C\cap \mathcal C_{>}^+$, then $E=E_{1}$ and there is no degree zero line bundle passing through $\textbf{p}^{+}_{\nabla}$, because
$$
\mathcal C\cap \mathcal C_{>}^+ = {\rm Con}_{<}^{\nu}(C,D) \cap {\rm Con}_{>}^{\nu}(C,D).
$$ 
The same conclusion can be done for $\textbf{p}^{-}_{\nabla}$ in place of $\textbf{p}^{+}_{\nabla}$ when  $(E, \nabla)\in\mathcal C_{>}^-$. 

The fiber-preserving isomorphism $\Phi_{D}^{+}$ is obtained by taking two positive elementary transformations with center at  $\textbf{p}^{+}_{\nabla}$ followed by twisting by $\mathcal O_{C}(-w_{\infty})$ (see Theorem \ref{second open} for details). The other fiber-preserving isomorphism $\Phi_{D}^{-}$ is defined similarly taking the elementary transformation at $\textbf{p}^{-}_{\nabla}$ instead of $\textbf{p}^{+}_{\nabla}$. 
\endproof

Recall that $S=(\mathbb P^{1}\times \mathbb P^{1}) \backslash \Delta$ and there  are projections 
\begin{eqnarray*}
\tau^{\epsilon}:S^{2} &\longrightarrow& \mathbb P^1\times \mathbb P^1\\
		((p_{1}^{+}, p_{1}^{-}), (p_{2}^{+}, p_{2}^{-})) &\mapsto& (p_{1}^{\epsilon}, p_{2}^{\epsilon})
\end{eqnarray*}
for $\epsilon\in \{+,-\}$, making $S^{2}$ a double $\mathbb C^{2}$-affine bundle over $\mathbb P^{1}\times \mathbb P^{1}$. Fibers of $\tau^{+}$ and $\tau^{-}$ are transverse between them.  

\begin{thm}\label{main3}
Assume $\nu_1^{\epsilon_1}+\nu_2^{\epsilon_2} \notin \mathbb Z$ for any $\epsilon_{k}\in \{+,-\}$. If $\nu_{k}^{+}-\nu_{k}^{-}\notin \{0, 1, -1\}$ for $k\in\{1, 2\}$, then the moduli space ${\rm Con}^{\nu}(C,D)$ of $\nu$-flat connections over the elliptic curve $C$, minus two points, is a union of three copies of $S^{2}$
$$
{\rm Con}^{\nu}(C,D) = S^{2}\cup_{\Psi^{+}} S^{2} \cup_{\Psi^{-}} S^{2}.
$$ 
Where $\Psi^{\pm}: S^{2} \dashrightarrow S^{2}$ are fiber-preserving isomorphisms outside a $(2,2)$ curve $\Gamma\subset \mathbb P^{1}\times \mathbb P^{1}$ isomorphic to $C$
\[
\xymatrix { 
S^{2}  \ar@{->}[dr]_{\tau^{-}} \ar@{-->}[rr]^{\Psi^{-}}  &    & S^2 \ar@{->}[dl]^{{\tau}^-} \ar@{->}[dr]_{\tau^{+}}  & & S^{2}.  \ar@{-->}[ll]_{\Psi^{+}} \ar@{->}[dl]^{{\tau}^+}\\
      &          \mathbb P^1\times \mathbb P^1 & &  \mathbb P^1\times \mathbb P^1
}
\]
\end{thm}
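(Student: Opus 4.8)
The plan is to assemble the statement from the three-chart decomposition of Theorem \ref{main2} together with the identification of each chart with $S^2$ coming from Theorem \ref{isomorphism affine} and Corollary \ref{maior}. First I would record that the genericity hypothesis $\nu_1^{\epsilon_1}+\nu_2^{\epsilon_2}\notin\mathbb Z$ guarantees, via Corollary \ref{cor criterion}, that every underlying parabolic bundle is indecomposable, so that Theorem \ref{main2} applies and
$$
{\rm Con}^{\nu}(C,D)=\mathcal C\cup\mathcal C_{>}^+\cup\mathcal C_{>}^-.
$$
It then remains to identify each of the three pieces with $S^2$ and to describe the two transition maps $\Psi^{\pm}$.

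For the middle chart $\mathcal C={\rm Con}_<^{\nu}(C,D)$, the hypothesis $\nu_k^+-\nu_k^-\neq 0$ (i.e. $\nu_1\cdot\nu_2\neq 0$) is exactly the condition of Theorem \ref{isomorphism affine}, so $\Par:\mathcal C\to S^2$ is an isomorphism of $\mathbb C^2$-affine bundles over $\mathbb P^1\times\mathbb P^1$, fiber-preserving over both $\tau^+$ and $\tau^-$. For the right chart $\mathcal C_{>}^+={\rm Con}_>^{\nu}(C,D)$, Corollary \ref{maior} gives $\mathcal C_{>}^+\simeq S^2$ as soon as $\nu_k^+-\nu_k^-\neq 1$, the isomorphism being $\Par_\lambda\circ\Phi_D^{+}$ with $\Phi_D^+:\mathcal C_{>}^+\to {\rm Con}_<^{\lambda}(C,D)$ and $\lambda_k^+-\lambda_k^-=(\nu_k^+-\nu_k^-)-1$. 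For the left chart $\mathcal C_{>}^-$, the same corollary applied with $\nu_k^-$ in place of $\nu_k^+$ (via $\Phi_D^-:\mathcal C_{>}^-\to{\rm Con}_<^{\gamma}(C,D)$ with $\gamma_k^+-\gamma_k^-=(\nu_k^+-\nu_k^-)+1$) gives $\mathcal C_{>}^-\simeq S^2$ as soon as $\nu_k^+-\nu_k^-\neq -1$. Together these are precisely the three excluded values $\{0,1,-1\}$, which explains the hypothesis.

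Next I would define the transition maps as the chart-change maps on the overlaps. On $\mathcal C\cap\mathcal C_{>}^+$ the underlying connection is literally the same, so setting $\Psi^+:=\Par\circ(\Phi_D^{+})^{-1}\circ\Par_{\lambda}^{-1}$ and, symmetrically, $\Psi^-:=\Par\circ(\Phi_D^{-})^{-1}\circ\Par_{\gamma}^{-1}$ produces the desired gluings. That $\Psi^+$ is fiber-preserving over $\tau^+$ follows because $\Phi_D^+$ is fiber-preserving over $\phi_D:X_>\to X_<$ while $\Par$ and $\Par_\lambda$ are fiber-preserving over the $\tau^{\epsilon}$; composing these and using that $\phi_D$ is an isomorphism of the base $\mathbb P^1\times\mathbb P^1$ shows $\Psi^+$ commutes with $\tau^+$, and likewise $\Psi^-$ with $\tau^-$. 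Finally, the overlap $\mathcal C\cap\mathcal C_{>}^+$ is precisely the locus of connections whose positive parabolic $\textbf{p}^+$ avoids the $(2,2)$ curve $\Gamma\cong C$ along which $X_<$ and $X_>$ are glued (Section \ref{par}); since $\phi_D$ preserves $\Gamma$, the map $\Psi^+$ is a biregular isomorphism over $\mathbb P^1\times\mathbb P^1\setminus\Gamma$ and acquires indeterminacy exactly along $(\tau^{+})^{-1}(\Gamma)$, and symmetrically for $\Psi^-$ along $(\tau^{-})^{-1}(\Gamma)$.

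I expect the main obstacle to be the bookkeeping in this last step: tracking how the elementary transformations defining $\Phi_D^{\pm}$, the coordinate change $\phi_D$ (the swap $(z_1,z_2)\mapsto(z_2,z_1)$), and the two $\Par$-identifications compose, and verifying that the resulting indeterminacy locus of $\Psi^{\pm}$ is exactly $\Gamma$ and nothing more. One should also confirm that the three charts are mutually compatible over the triple overlap $\mathcal U=\mathcal C\cap\mathcal C_{>}^+\cap\mathcal C_{>}^-$ described in Theorem \ref{main2}, so that the three copies of $S^2$ genuinely patch into ${\rm Con}^{\nu}(C,D)$ with no further identifications.
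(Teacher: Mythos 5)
Your proposal is correct and follows essentially the same route as the paper's own proof: the decomposition ${\rm Con}^{\nu}(C,D)=\mathcal C\cup\mathcal C_{>}^+\cup\mathcal C_{>}^-$ from Theorem \ref{main2}, the identification of each chart with $S^2$ via Theorem \ref{isomorphism affine} applied to the exponents $\nu$, $\lambda$, $\gamma$ (which is exactly where the three excluded values $\{0,1,-1\}$ arise), transition maps built from the inverses of $\Phi_D^{\pm}$, and the overlap described by the $(2,2)$ curve $\Gamma$ of parabolics lying on a common degree-zero line bundle. If anything, your bookkeeping is slightly more careful than the paper's (your $\lambda$/$\gamma$ labels are consistent with Theorem \ref{main2}, whereas the paper's proof swaps them in an evident typo).
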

 
 \proof
The three copies of $S^{2}$ are given by Theorem \ref{main1}
$$
{\rm Con}^{\nu}(C,D)= \mathcal C \cup \mathcal C_{>}^+\cup \mathcal C_{>}^+.
$$
In fact, $\nu_{k}^{+}-\nu_{k}^{-}\neq 0$ implies that $\mathcal C\simeq S^{2}$ (see Theorem \ref{isomorphism affine}).  If $\nu_{k}^{+}-\nu_{k}^{-}\notin \{1, -1\}$, by the same reason, the two moduli spaces ${\rm Con}_<^{\gamma}(C,D)$ and ${\rm Con}_<^{\lambda}(C,D)$ of Theorem \ref{main1}, can be also identified with $S^{2}$. Then we get the following fiber-preserving isomorphisms 
$$
\Phi_{D}^{+}: \mathcal C_{>}^+\longrightarrow {\rm Con}_<^{\gamma}(C,D)\simeq S^{2} \;\;\;\text{and}  \;\;\;\Phi_{D}^{-}: \mathcal C_{>}^-\longrightarrow {\rm Con}_<^{\lambda}(C,D)\simeq S^{2}.
$$
We define $\Psi^{+}$ and $\Psi^{-}$ as inverse of $\Phi_{D}^{+}$ and $ \Phi_{D}^{-}$, respectively. To conclude the proof of theorem we remark that $\mathcal C_{>}^+={\rm Con}_>^{\nu}(C,D)$ intersect $\mathcal C={\rm Con}_<^{\nu}(C,D)$ outside the locus of pairs $(E_{1},\nabla)$ such that the respective parabolic bundle $(E_{1}, \textbf{p}^{+}_{\nabla})$ has two directions $\textbf{p}^{+}_{\nabla}=\{p_{1}^{+}, p_{2}^{+}\}$ lying in the same degree zero line bundle $L\in {\rm Jac}^0(C)\simeq C$.  This locus is a $(2,2)$ curve $\Gamma\subset \mathbb P^{1}\times \mathbb P^{1}$ parametrized by $C$.  The same argument works for $\mathcal C_{>}^-$ with $\textbf{p}^{-}_{\nabla}$ instead of $\textbf{p}^{+}_{\nabla}$ (see Figure \ref{3 copias}).
\endproof

\begin{center}
\begin{figure}[h]
\centering
\includegraphics[height=1.3in]{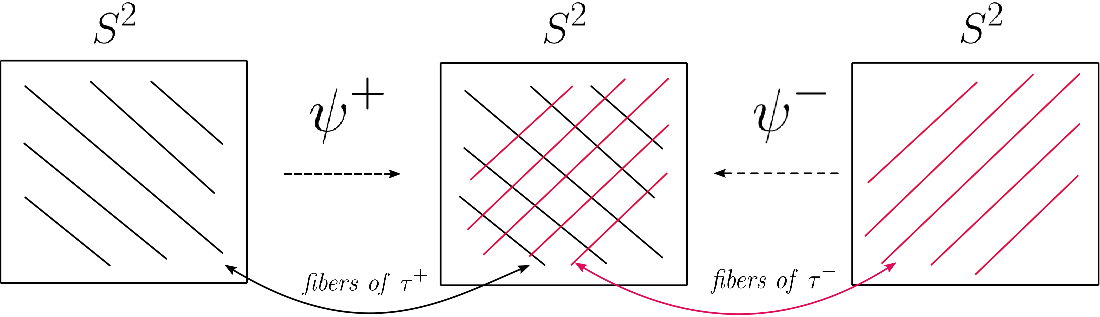}
\caption{Description of the moduli space}
\label{3 copias}
\end{figure}
\end{center}

\subsection{Apparent singularities}\label{sec:Apparent}

In this section, we will study the {\it apparent map} with respect to $\mathcal O_C \hookrightarrow E_1$ 
(see \cite{LS}). Each connection $\nabla$ on $E_1$ defines an $\mathcal O_C$-linear map:
$$
 \mathcal O_C \stackrel{\iota}{\hookrightarrow}   E_1 \stackrel{\nabla}{\longrightarrow}  E_1\otimes\Omega_C^1(D) \longrightarrow (E_1/\mathcal O_C)\otimes \Omega_C^1(D)
$$
where the last arrow  is defined by the quotient map from $E_1$ to $E_1/\iota(\mathcal O_C)$ (denoted $E_1/\mathcal O_C$) for short. That is, we shall consider the mapping
$$
\varphi_{\nabla}: \mathcal O_C\longrightarrow (E_1/\mathcal O_C)\otimes \Omega_C^1(D).
$$
Its zero divisor defines an element of the linear system 
$$
\mathbb P{\rm {H}}^0(C,(E_1/\mathcal O_C)\otimes \Omega_C^1(D)).
$$
Since $D=t_1+t_2$, $\det (E_1)=\mathcal O_C(w_{\infty})$ and $\Omega_C^1\simeq \mathcal O_C$ then 
$$
\mathbb P {\rm {H}}^0(C,(E_1/\mathcal O_C)\otimes \Omega_C^1(D))=|\mathcal O_C(w_{\infty}+t_1+t_2)|\simeq \mathbb P^2.
$$
Then one gets a map
$$
App: {\rm Con}_<^{\nu}(C,D) \longrightarrow |\mathcal O_C(w_{\infty}+t_1+t_2)|\simeq \mathbb P^2.
$$
On the other hand, we have a mapping
$$
Bun : {\rm Con}_<^{\nu}(C,D) \longrightarrow X_<\simeq \mathbb P^1_{z_1}\times \mathbb P^1_{z_2}
$$
which comes from the forgetful map.

\begin{definition}
Let $V\simeq\mathbb C^2\subset X_<\simeq \mathbb P^1\times \mathbb P^1$ be the open subset corresponding to parabolic bundles $(E_1,\textbf{p})$, $\textbf{p}=\{p_1,p_2\}$, such that $p_i\neq (1:t)$ for $i=1,2$, where $t\in\mathbb C$ is the first coordinate of $t_1=(t,\yt) \in C$. We have the decomposition:
$$\mathbb P^1_{z_1}\times \mathbb P^1_{z_2}=V\ \sqcup\ (\{z_1=t\} \cup \{z_2=t\})\ \ \ \text{and}\ \ \ 
\{z_1=t\} \cap \{z_2=t\}=:\{u_t\}.$$
\end{definition}

\begin{thm}\label{thm:apparentbirat}
If $\nu_1+\nu_2+1\neq 0$, then the map $Bun\times App$ defines a birational map
$$
Bun\times App : \overline{{\rm  {Con}}_<^{\nu}(C,D)} \longrightarrow (\mathbb P^1_{z_1}\times \mathbb P^1_{z_2})\times \mathbb P^2
$$
between $\mathbb P^2$-bundles over $\mathbb P^1_{z_1}\times \mathbb P^1_{z_2}$. Moreover, the following assertions hold true
\begin{enumerate}
\item its restriction to $V\subset \mathbb P^1_{z_1}\times \mathbb P^1_{z_2}$ is an isomorphism
$$
Bun\times App|_{Bun^{-1}(V)} : Bun^{-1}(V)\stackrel{\simeq}{\longrightarrow} V\times \mathbb P^2;
$$
\item if $u\in \{z_1=t\}\cup \{z_2=t\}\setminus \{u_t\}$, then the image of 
$$
Bun\times App|_{Bun^{-1}(u)} : \mathbb P^2{\longrightarrow} \mathbb P^2
$$
is a line;
\item if $\nu_1+\nu_2-1\neq 0$ and $u=u_t$, then $Bun\times App|_{Bun^{-1}(u_t)}$ is constant and its image is the point
$$
w_{\infty}+t_1+t_2\in |\mathcal O_C(w_{\infty}+t_1+t_2)|\simeq \mathbb P^2;
$$
\item if $\nu_1+\nu_2-1= 0$ and $u=u_t$, then $Bun^{-1}(u_t)$ lies in the indetermination locus of $Bun\times App$.  
\end{enumerate}
\end{thm}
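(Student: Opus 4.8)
The plan is to reduce the statement to fibrewise linear algebra. Since forming $\varphi_\nabla$ is $\mathbb C$-linear in $\nabla$, using the trivialisation of Proposition \ref{trivialization} and writing a point of the fibre $\overline{Bun^{-1}(\z)}$ as $\mathbb P[C_0\nabla^0(\z)+C_1\Theta_1^0(\z)+C_2\Theta_2^0(\z)]$ gives
\begin{equation*}
\varphi_{C_0\nabla^0+C_1\Theta_1^0+C_2\Theta_2^0}=C_0\,\varphi_{\nabla^0}+C_1\,\varphi_{\Theta_1^0}+C_2\,\varphi_{\Theta_2^0}\ \in\ {\rm H}^0(C,\mathcal O_C(w_\infty+t_1+t_2)).
\end{equation*}
Hence $Bun\times App$ is a morphism of $\mathbb P^2$-bundles over $\mathbb P^1_{z_1}\times\mathbb P^1_{z_2}$, given fibrewise by the projectivisation of the $3\times3$ matrix $M(\z)$ whose columns are the coordinates of $\varphi_{\nabla^0(\z)},\varphi_{\Theta_1^0(\z)},\varphi_{\Theta_2^0(\z)}$ in a fixed basis of the three-dimensional space ${\rm H}^0(C,\mathcal O_C(w_\infty+t_1+t_2))$. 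Consequently the map is birational exactly when $\det M(\z)\not\equiv0$, it is an isomorphism over the locus $\{\det M\ne0\}$, and each fibrewise assertion (1)--(4) amounts to computing the rank of $M$ along the corresponding stratum.

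The geometric key is that the divisor $\{z_i=t\}$ is precisely the locus where the parabolic $p_i$ meets the subbundle $L_\infty=\mathcal O_C\hookrightarrow E_1$ with respect to which $App$ is defined. Indeed, by Remark \ref{trivial line bundle} the bundle $L_\infty$ is the section $(x,y)\mapsto(1:x)$, which meets both fibres over $t_1=(t,\yt)$ and $t_2=(t,-\yt)$ at the point $(1:t)$; thus $p_i\subset L_\infty\vert_{t_i}$ if and only if $z_i=t$. When this happens, $\mathrm{Res}_{t_i}(\nabla)$ sends the generator $s(t_i)=p_i$ of $L_\infty\vert_{t_i}$ back into $L_\infty\vert_{t_i}$, so the polar part of $\nabla s$ projects to $0$ in $(E_1/\mathcal O_C)\otimes\Omega^1_C(D)$; equivalently $\varphi_\nabla$ vanishes at $t_i$ and hence lies in the subsystem ${\rm H}^0(C,\mathcal O_C(w_\infty+t_j))$, $\{i,j\}=\{1,2\}$. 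The same computation applies verbatim to the Higgs fields $\Theta_k^0$, since a parabolic Higgs field also preserves the line $p_i$ at $t_i$. As this holds for every member of the fibre, over $\{z_i=t\}\setminus\{u_t\}$ the whole image is contained in the line $|\mathcal O_C(w_\infty+t_j)|\subset\mathbb P^2$, proving assertion (2) once one checks the rank is exactly $2$; and over $u_t$, where both parabolics lie on $L_\infty$, one gets $\varphi_\nabla\in{\rm H}^0(C,\mathcal O_C(w_\infty))$, a one-dimensional space whose unique divisor is $w_\infty$, so whenever $\varphi_\nabla\ne0$ its divisor equals $w_\infty+t_1+t_2$.

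To separate assertions (3) and (4) I would use the Residue (Fuchs) Theorem. Over $u_t$ the scalar $f(\nabla):=\varphi_\nabla/\sigma_0$, where $\sigma_0$ generates ${\rm H}^0(C,\mathcal O_C(w_\infty))$, is an affine-linear function of $\nabla$, and $f(\nabla)=0$ is equivalent to $L_\infty=\mathcal O_C$ being $\nabla$-invariant. If $L_\infty$ were invariant, then $(\mathcal O_C,\nabla\vert_{L_\infty})$ would be a logarithmic connection of degree $0$ whose residues at $t_1,t_2$ are the exponents of the eigendirections lying on $L_\infty$, namely $\nu_1^+,\nu_2^+$ (recall $p_k^+=s(t_k)\subset L_\infty$ over $u_t$); the Fuchs relation would force $\nu_1^++\nu_2^+=0$, i.e. $\nu_1+\nu_2-1=0$. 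Hence if $\nu_1+\nu_2-1\ne0$ no connection in the fibre preserves $L_\infty$, so $f$ is a nowhere-vanishing affine function on $\mathbb A^2$, thus a nonzero constant; this means $\varphi_{\Theta_1^0}=\varphi_{\Theta_2^0}=0$ and $\varphi_{\nabla^0}\ne0$, so the fibre collapses to the single point $w_\infty+t_1+t_2\in|\mathcal O_C(w_\infty+t_1+t_2)|$, which is assertion (3). In the remaining case $\nu_1+\nu_2-1=0$ the explicit family of Table \ref{basis} shows that $f\equiv0$ (equivalently every parabolic Higgs field, and the reference connection $\nabla^0$, preserves $L_\infty$ over $u_t$), so $M(u_t)=0$ and the whole fibre lies in the indeterminacy locus, which is assertion (4).

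It remains to determine $\det M$, and this explicit computation is the technical heart of the argument. Working in the Fuchsian realisation ${\rm Syst}^\theta(C,D')$ of Proposition \ref{systems}, the tangency of the Riccati foliation of $\nabla=d+A\,\frac{dx}{y}$, with $A=\bigl(\begin{smallmatrix}\alpha&\beta\\\gamma&-\alpha\end{smallmatrix}\bigr)$, with the section $\xi=x$ of $L_\infty$ is cut out by $G:=y+\gamma-2\alpha x-\beta x^2$; the apparent conditions at $w_0,w_1,w_\lambda$ (these being the zeros of $y$) cause the a priori poles of $G$ there to cancel, and $(x-t)\,G$ then represents $\varphi_\nabla$ as an element $a_0+a_1x+a_2y$ of ${\rm H}^0(C,\mathcal O_C(3w_\infty))=\langle1,x,y\rangle$. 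Substituting the entries $\alpha,\beta,\gamma$ of $\nabla^0,\Theta_1^0,\Theta_2^0$ from Table \ref{basis}, reading off the three coordinate triples $(a_0,a_1,a_2)$, and expanding, I expect $\det M(\z)$ to be a nonzero scalar multiple of $(\nu_1+\nu_2+1)(z_1-t)(z_2-t)$. Granting this, $\det M\not\equiv0$ exactly when $\nu_1+\nu_2+1\ne0$ (birationality), its zero locus is precisely $\{z_1=t\}\cup\{z_2=t\}$, so the map is an isomorphism over the complement $V$ (assertion (1)) and drops to rank $2$ along each $\{z_i=t\}\setminus\{u_t\}$ (completing assertion (2)). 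The main obstacle is exactly this determinant computation together with its bookkeeping, namely verifying the cancellation of the apparent poles at the $2$-torsion points and that the $w_\infty$-pole of $G$ has the expected order; after that, the factorisation of $\det M$, and with it all four assertions, follows.
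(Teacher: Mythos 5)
Your overall strategy is the same as the paper's: use the trivialisation of Proposition \ref{trivialization} to write $App$ fibrewise as the projectivisation of a $3\times 3$ matrix $M(\z)$, and read birationality and all four assertions off its rank. Where you genuinely differ is in assertions (2)--(4): the paper gets them by substituting into the explicit matrix, whereas you derive them conceptually --- the incidence $p_i\subset L_\infty\vert_{t_i}\Leftrightarrow z_i=t$ forces $\varphi_\nabla$ (and likewise $\varphi_{\Theta_k^0}$) to vanish at $t_i$, hence the image to lie in the line $\mathbb P{\rm H}^0(C,\mathcal O_C(w_\infty+t_j))$; and over $u_t$ the Fuchs relation applied to $\nabla\vert_{L_\infty}$ cleanly separates case (3) ($\nu_1+\nu_2-1\neq0$: no connection in the fibre preserves $L_\infty$, so the affine-linear function $f$ is a nonzero constant, killing the Higgs columns) from case (4). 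These arguments are correct and check out against the paper's formulas: when $z_1=t$ the paper's image does lie in $\{a_0+ta_1+\yt a_2=0\}$, exactly the sections vanishing at $t_1$, and the image point in case (3) is the constant section, i.e.\ the divisor $w_\infty+t_1+t_2$. This is more illuminating than the paper's brute-force reading of the matrix.

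There are, however, three places where your proposal falls short of a proof. First and mainly, the decisive computation --- $\det M(\z)$, on which birationality and assertion (1) entirely rest --- is not carried out but only ``expected''; the paper's proof essentially \emph{is} this computation, and its result is
$$
\det(App_{\z})= -32 \yt^2(t-z_1)^2(t-z_2)^2(\nu_1+\nu_2+1),
$$
so your guessed factorisation $(\nu_1+\nu_2+1)(z_1-t)(z_2-t)$ has the wrong multiplicities (the factors appear squared). The zero locus and the factor $(\nu_1+\nu_2+1)$ are as you predicted, so the conclusions you draw would survive, but the step itself is missing. Second, a normalisation slip: the apparent conditions force $G=y+\gamma-2\alpha x-\beta x^2$ to actually \emph{vanish} (to order one) at each $w_i$, not merely to be regular there, and $\beta x^2$ gives $G$ a pole of order $4$ at $w_\infty$; the element of ${\rm H}^0(C,\mathcal O_C(3w_\infty))=\langle1,x,y\rangle$ representing $\varphi_\nabla$ is therefore $(x-t)G/y$, not $(x-t)G$. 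Third, Table \ref{basis} only trivialises the bundle over the chart $U_0$, while $V$ and the lines $\{z_i=t\}$ contain points with $z_i=\infty$ lying outside $U_0$; to prove assertions (1) and (2) everywhere you also need the computation in the chart $U_\infty$ with the basis $\{\nabla^\infty,\Theta_1^\infty,\Theta_2^\infty\}$, as the paper does (and even then the paper leaves the two points $(0,\infty)$, $(\infty,0)$ undetailed). Your chart-free incidence argument does cover the ``image lies in a line'' half of (2) at such points, but not the rank-two statement nor the isomorphism in (1).
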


\begin{remark}A phenomenon similar to case $(3)$ was observed in the genus zero case by Szil\'ard Szab\'o in \cite{Szabo}.
\end{remark}

\proof
Firstly, let us fix $\z \in U_0\subset  X_<$.  We will use  the explicit basis $\{\nabla^0, \Theta_1^0, \Theta_2^0\}$ given in Table \ref{basis} to compute the mapping 
\begin{eqnarray}\label{appzw}
App_{(\z)}: Bun^{-1}(\z)\simeq\mathbb P^2 \longrightarrow |\mathcal O_C(w_{\infty}+t_1+t_2)|\simeq \mathbb P^2
\end{eqnarray}
with respect to the subbundle  $\mathcal O_C(-2w_{\infty}) \hookrightarrow \mathcal O_C\oplus \mathcal O_C$ (see Remark \ref{trivial line bundle}).

In order to give explicit coordinates for $App$, we fix a basis of rational functions
$$
\left\{\frac{1}{(x-t)}, \frac{x}{(x-t)}, \frac{y}{(x-t)}\right\}
$$
for the vector space
$
{\rm {H}}^0(C, \mathcal O_C(w_{\infty}+t_1+t_2))
$
and denote by $\ba=(a_0: a_1: a_2)$ projective coordinates  with respect to that basis. 

Let $\bc=(c_0:c_1:c_2)\in \mathbb P^2$ be coordinates of $Bun^{-1}(\z)$ with respect to the basis $\{\nabla^0(\z), \Theta_1^0(\z), \Theta_2^0(\z)\}$ (see Section \ref{sec:universal}), that is,  each $\nabla(\z)\in Bun^{-1}(\z)$ writes
$$
\nabla(\z) = c_0\cdot\nabla^{0}(\z) + c_{1}\cdot \Theta_{1}^{0}(\z)+ c_{2}\cdot\Theta_{2}^{0}(\z).
$$
Since the subbundle $\mathcal O_C(-2w_{\infty}) \hookrightarrow \mathcal O_C\oplus \mathcal O_C$ corresponds to the explicit section 
$$s: C  \longrightarrow \mathcal O_C\oplus \mathcal O_C\ ;\ (x,y)\mapsto \begin{pmatrix}1\\ x\end{pmatrix}$$ 
as in Remark \ref{trivial line bundle}, we can compute the coordinates of the mapping $App_{\z}:\mathbb P^2_{\bc} \longrightarrow \mathbb P^2_{\ba}$: 
 $$
 App_{\z}(\bc)= a_0\cdot \frac{1}{(x-t)} + a_1\cdot  \frac{x}{(x-t)} + a_2 \cdot \frac{y}{(x-t)}
 $$
where
$${\footnotesize{
\begin{matrix}
a_0(\bc) &=& \yt(\nu_1(2z_1-t)+\nu_2(2z_2-t)-t)\cdot c_0 & - 4\yt z_1(z_1-t)\cdot c_1 & -4\yt z_2(z_2-t)\cdot c_2 \\
a_1(\bc) &=& -\yt(\nu_1+\nu_2-1)\cdot c_0 & + 4\yt(z_1-t)\cdot c_1 & + 4\yt(z_2-t)\cdot c_2 \\
a_2(\bc) &=& 2(\nu_1(z_1-t)-\nu_2(z_2-t))\cdot c_0 & - 4(z_1-t)^2\cdot c_1 & + 4(z_2-t)^2 \cdot c_2 
\end{matrix}
}
}$$
In other words, $App_{\z}:\mathbb P^2_{\bc} \longrightarrow \mathbb P^2_{\ba}$ is a  projective transformation defined by a matrix with determinant 
$$
\det(App_{\z})= -32 r^2(t-z_1)^2(t-z_2)^2(\nu_1+\nu_2+1).
$$
We promptly deduce that the rank drop exactly when $z_1=t$ or $z_2=t$. In addition, if $z_1=t$ and $z_2\neq t$, then $App_{\z}$ does not depend of $c_1$ and its matrix has rank two. The same happens if $z_1\neq t$ and $z_2=t$. If $z_1=z_2=t$, then the matrix has rank one. If $\nu_1+\nu_2-1\neq 0$, a straightforward computation shows that the image point is $(t:-1:0)$, 
which corresponds to a non-vanishing constant function in ${\rm {H}}^0(C, \mathcal O_C(w_{\infty}+t_1+t_2))$. This shows that the image point is $w_{\infty}+t_1+t_2\in |\mathcal O_C(w_{\infty}+t_1+t_2)|$ via the identification  $|\mathcal O_C(w_{\infty}+t_1+t_2)|\simeq \mathbb P{\rm {H}}^0(C, \mathcal O_C(w_{\infty}+t_1+t_2))$. Assertion $(4)$ can easily be verified.

We now consider the case where $$\Z=(Z_1,Z_2)=(1/z_1,1/z_2)\in U_{\infty}\subset  X_<.$$ Using the basis $\{\nabla^{\infty}, \Theta_1^{\infty}, \Theta_2^{\infty}\}$ instead of $\{\nabla^0, \Theta_1^0, \Theta_2^0\}$ one can find the coordinates of $App_{\Z}:\mathbb P^2_{\bC} \longrightarrow \mathbb P^2_{\ba}$ in terms of this basis: 
$$
 App_{\Z}(C_0\cdot\nabla^{\infty}+C_1\cdot\Theta_1^{\infty}+ C_2\cdot\Theta_2^{\infty})= a_0\cdot \frac{1}{(x-t)} + a_1\cdot  \frac{x}{(x-t)} + a_2 \cdot \frac{y}{(x-t)}
 $$
where
$${\footnotesize{
\begin{matrix}
a_0(\bC) =\hfill t\yt(1-\nu_1-\nu_2)\cdot C_0\hskip0.4cm  + 4\yt(1-tZ_1)\cdot C_1\hskip0.4cm  +4\yt(1-tZ_2)\cdot C_2\\
a_1(\bC) = \yt(\nu_1(2tZ_1-1)+\nu_2(2tZ_2-1)-1)\cdot C_0   + 4\yt Z_1(tZ_1-1)\cdot C_1   + 4\yt Z_2(tZ_2-1)\cdot C_2 \\
a_2(\bC) =\hfill  2t(\nu_1(tZ_1-1)-\nu_2(tZ_2-1))\cdot C_0\hskip0.4cm   + 4(tZ_1-1)^2\cdot C_1\hskip0.4cm    -4(tZ_2-1)^2 \cdot C_2 
\end{matrix}
}
}$$
It follows that 
$$
\det(App_{\Z})= -32(tZ_1-1)^2(tZ_2-1)^2(\nu_1+\nu_2+1).
$$
Since $U_0\cup U_{\infty}=(\mathbb P^1\times \mathbb P^1)\backslash \{(\infty,0), (0,\infty)\}$, to conclude the proof of theorem, it remains to consider the case where $u=(\infty,0)$ and $u=(0,\infty)$; we do not detail.
\endproof

\begin{thm}
If $\nu_1+\nu_2+1 = 0$, then the rational map 
$$
Bun\times App : \overline{{\rm  {Con}}_1^{\nu}(C,D)} \longrightarrow \mathbb P^1_{z_1}\times\mathbb P^1_{z_2}\times \mathbb P^2
$$
is non--dominant. Moreover,  $(Bun\times App)^{-1}(y)$ is one dimensional for general $y$  lying on the image of $Bun\times App$.
\end{thm}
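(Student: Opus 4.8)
The plan is to leverage the explicit matrix computation already performed in the proof of Theorem \ref{thm:apparentbirat}, where everything boils down to the vanishing of a single determinant. Recall that over the chart $U_0$, for fixed $\z=(z_1,z_2)$, the apparent map
$$
App_{\z}:\ \mathbb P^2_{\bc}\longrightarrow \mathbb P^2_{\ba}
$$
is the linear map given by the explicit formulas for $a_0(\bc),a_1(\bc),a_2(\bc)$ recorded there, whose determinant is
$$
\det(App_{\z})=-32\,r^2(t-z_1)^2(t-z_2)^2(\nu_1+\nu_2+1).
$$
First I would simply observe that under the hypothesis $\nu_1+\nu_2+1=0$ this determinant vanishes identically in $\z$. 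Hence, for every $\z\in U_0$, the linear map $App_{\z}$ drops rank and its image is contained in a proper linear subspace $\Lambda_{\z}\subsetneq\mathbb P^2_{\ba}$.

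Next I would pin the rank down precisely. Substituting $\nu_2=-1-\nu_1$ into the three rows and inspecting a suitable $2\times2$ minor of the matrix, one checks that for generic $\z$ the rank of $App_{\z}$ equals exactly $2$, since the minors are not all identically zero after the substitution. Consequently $\Lambda_{\z}$ is a genuine line for generic $\z$, and the kernel $\ker(App_{\z})\subset\mathbb C^3$ is one dimensional, so that the fiber $App_{\z}^{-1}(\ba)$ is a projective line for every $\ba\in\Lambda_{\z}$.

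From here both assertions follow by a dimension count. The image of $Bun\times App$ over $Bun^{-1}(U_0)$ is the locus $\{(\z,\ba)\ :\ \ba\in\Lambda_{\z}\}$, which fibers over the two dimensional base $U_0$ with one dimensional fibers, so its closure has dimension at most $3$, strictly less than $4=\dim\big((\mathbb P^1\times\mathbb P^1)\times\mathbb P^2\big)$. The same argument applies verbatim over $U_\infty$ using the companion determinant $\det(App_{\Z})=-32(tZ_1-1)^2(tZ_2-1)^2(\nu_1+\nu_2+1)$, which also vanishes identically; since $U_0\cup U_\infty$ fills $X_<$ up to two points, $Bun\times App$ is non--dominant. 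For the fiber statement, a general point $y$ in the image has the form $(\z,\ba)$ with $\z$ generic and $\ba\in\Lambda_{\z}$; because $Bun$ records the $\z$ coordinate, one has $(Bun\times App)^{-1}(y)=\{\z\}\times App_{\z}^{-1}(\ba)\cong\mathbb P^1$, which is one dimensional.

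The hard part will be the second step, namely confirming that the rank of $App_{\z}$ is exactly $2$ on a Zariski-dense set of $\z$ rather than dropping further to $1$. This is precisely what guarantees that the images $\Lambda_{\z}$ are genuine lines and that the general fiber is one dimensional, and it is verified by exhibiting a single $2\times2$ minor of the matrix that does not vanish identically after imposing $\nu_2=-1-\nu_1$.
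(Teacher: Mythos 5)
Your proposal is correct and takes essentially the same route as the paper: both arguments reduce to showing that, when $\nu_1+\nu_2+1=0$, the explicit linear map $App_{\z}$ coming from the universal family has rank exactly $2$ for generic $\z$, so each image is a line $\Lambda_{\z}$ and the fibers are one dimensional, whence non-dominance by the dimension count. The only cosmetic difference is how rank $\le 2$ is obtained: the paper exhibits the explicit relation $App_{\z}(\nabla^{0})=-\tfrac{\nu_1}{2(z_1-t)}App_{\z}(\Theta_1^0)+\tfrac{\nu_1+1}{2(z_2-t)}App_{\z}(\Theta_2^0)$ and notes $App_{\z}(\Theta_1^0)\neq App_{\z}(\Theta_2^0)$ in $\mathbb P^2$, while you quote the vanishing of the already-computed determinant and a non-vanishing $2\times 2$ minor, which is the same linear algebra.
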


\proof
Assume $\nu_1+\nu_2+1 = 0$. If $z_1\neq t$ and $z_2\neq t$ then 
$$
App_{\z}(\nabla^{0}(\z))  = -\frac{\nu_1}{2(z_1-t)}App_{\z}(\Theta_1^0(\z)) + \frac{\nu_1+1}{2(z_2-t)}App_{\z}(\Theta_2^0(\z)). 
$$
In addition, $App_{\z}(\Theta_1^0(\z))$ and $App_{\z}(\Theta_2^0(\z))$ do not coincide in $\mathbb P^2$. This shows that the image of $App_{\z}: \mathbb P^2 \longrightarrow \mathbb P^2$ is the line spanned by $App_{\z}(\Theta_1^0(\z))$ and $App_{\z}(\Theta_2^0(\z))$. 
\endproof

\subsection{Symplectic structure and Torelli phenomenon}\label{Sec:Symplectic}

Recall that the moduli space of connections ${\rm Con}^{\nu}(C,D)$ admits a canonical holomorphic symplectic structure,
i.e. a holomorphic non degenerate $2$-form $\omega$:  
$\omega\wedge\omega\not=0$ at any point of ${\rm Con}^{\nu}(C,D)$ (see for instance \cite{Simpson}).
Given any Lagrangian rational (or local) section $\nabla_0:{\rm Bun}^{\nu}(C,D)\to{\rm Con}^{\nu}(C,D)$,
then the reduction map
$$
  \xymatrix{ \nabla \ar@{|->}[rr] && \nabla-\nabla_0  \\ 
{\rm Con}^{\nu}(C,D) \ar@{-->}[rr] \ar[rd] && \Hig^{\nu}(C,D) \ar[ld] \\ &  {\rm Bun}^{\nu}(C,D)}
$$
is symplectic. Moreover, the following three maps are Lagrangian
$$\xymatrix{ &{\rm Con}^{\nu}_{<}(C,D) \ar[rr]^{App}  \ar[rd]_{Bun^-} \ar[dl]^{Bun^+} &&{\vert\mathcal O_C(w_{\infty}+t_1+t_2)\vert}\\
X_{<}   && X_{<} &}
$$
i.e. with Lagrangian fibers (see \cite{Simpson}). In particular, the section $\nabla^0$
defined in Section \ref{sec:universal} is Lagrangian since it is a fiber of $Bun^-$ (see Remark \ref{rk:characNabla0}).
We promptly deduce from the reduction map $\nabla_{\bc}\mapsto\nabla_{\bc}-\nabla^0$ applied to 
the universal connection $\nabla_{\bc}$ defined by (\ref{FormuleNablac})
that 
the symplectic structure on ${\rm Con}^{\nu}(C,D)$
is given by $\omega=dc_1\wedge dz_1+dc_2\wedge dz_2$
as for Higgs bundles in Theorem \ref{main1}.

The image of $\omega$ by the map $\Par:{\rm Con}^{\nu}_{<}(C,D)\to S$ defined in (\ref{FormuleParc})
is given by setting $c_i=\nu_i/2(z_i-\zeta_i)$, $i=1,2$, i.e. by
$$
\omega=-\frac{1}{2}\left\{\nu_1\frac{dz_1\wedge d\zeta_1}{(z_1-\zeta_1)^2}+\nu_2\frac{dz_2\wedge d\zeta_2}{(z_2-\zeta_2)^2}\right\}.
$$
In particular, we see that, taking into account the symplectic structure of the bundle $Bun:{\rm Con}^{\nu}_{<}(C,D)\to {\rm Bun}^{\nu}_{<}(C,D)$, we can recover the eigenvalues $\nu_1,\nu_2$ in the spirit of Torelli Theorem.

\begin{prop}\label{prop:Torelli}If we have an equivariant bundle symplectic isomorphism
$$
\xymatrix{
\left({\rm Con}^{\nu}_{<}(C,D),\omega\right)\ar[r]^\Phi_\sim \ar[d]^{Bun} & \left({\rm Con}^{\tilde{\nu}}_{<}(C,D),\tilde{\omega}\right) \ar[d]^{Bun} \\
{\rm Bun}_{<}(C,D)\ar[r]^\phi_\sim & {\rm Bun}_{<}(C,D)}
$$
Then $(\tilde{\nu}_1,\tilde{\nu}_2)=(\nu_1,\nu_2)$ or $(\nu_2,\nu_1)$.
\end{prop}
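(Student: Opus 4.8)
The plan is to work in the $\Par$-coordinates of Theorem~\ref{isomorphism affine}, where ${\rm Con}^{\nu}_{<}(C,D)\simeq S^2$ carries coordinates $((z_1,\zeta_1),(z_2,\zeta_2))$, the forgetful map $Bun=Bun^+$ is the projection $\tau^+$ to $(z_1,z_2)$, and the symplectic form is
\[
\omega=-\tfrac12\Big(\nu_1\tfrac{dz_1\wedge d\zeta_1}{(z_1-\zeta_1)^2}+\nu_2\tfrac{dz_2\wedge d\zeta_2}{(z_2-\zeta_2)^2}\Big).
\]
Over the chart $U_0$ I would instead use the canonical coordinates $(z_1,z_2;c_1,c_2)$ of Section~\ref{sec:universal}, in which $\omega=dc_1\wedge dz_1+dc_2\wedge dz_2$ and (Theorem~\ref{main1}) the bundle $Bun$ is glued to $U_\infty$ by $c_i^{(0)}=\tfrac{\nu_i}{2z_i}+z_i^{-2}c_i^{(\infty)}$; thus $Bun\colon{\rm Con}^\nu_<\to X_<$ is a torsor under $T^*X_<$ of class $(\tfrac{\nu_1}2,\tfrac{\nu_2}2)\in H^1(X_<,T^*X_<)\simeq\mathbb C^2$. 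The whole point is that $\omega$ \emph{rigidifies} this class, since by Corollary~\ref{trivial extension} the bare affine bundle remembers $\nu$ only up to independent rescaling of $\nu_1,\nu_2$.

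First I would reduce $\phi$. Every automorphism of ${\rm Bun}_<(C,D)\simeq\mathbb P^1\times\mathbb P^1$ either preserves the two rulings, $\phi=(\phi_1,\phi_2)$ with $\phi_i\in\mathrm{PGL}_2$, or swaps them. The swap case reduces to the product case: the index-exchange $\sigma\colon((z_1,\zeta_1),(z_2,\zeta_2))\mapsto((z_2,\zeta_2),(z_1,\zeta_1))$ is a symplectic isomorphism $({\rm Con}^{(\nu_2,\nu_1)}_<,\omega)\to({\rm Con}^{\nu}_<,\omega)$ over the base swap, so replacing $\Phi$ by $\Phi\circ\sigma$ turns a swap-type $\phi$ into a product-type one at the cost of exchanging $\nu_1\leftrightarrow\nu_2$, which produces exactly the second alternative. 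In the product case I would normalize $\phi$ to the identity, using that the kernel $\tfrac{dz\,d\zeta}{(z-\zeta)^2}$ is invariant under the \emph{diagonal} $\mathrm{PGL}_2$-action $z\mapsto\phi_i(z),\ \zeta\mapsto\phi_i(\zeta)$: the resulting diagonal automorphism $D_\phi$ of $S^2$ is symplectic and is an affine-bundle automorphism of ${\rm Con}^\nu_<$ over $\phi$ (its vertical part is the cotangent lift of $\phi$), so $\Phi':=\Phi\circ D_\phi^{-1}$ is a symplectic bundle isomorphism ${\rm Con}^\nu_<\to{\rm Con}^{\tilde\nu}_<$ covering $\mathrm{id}_{X_<}$.

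The heart of the argument is to show $\Phi'$ forces $\tilde\nu=\nu$. Either because the given map is one of affine bundles, or because a fiber-preserving symplectomorphism automatically respects the canonical affine structure of the Lagrangian fibration $Bun$, the map $\Phi'$ is an isomorphism of $T^*X_<$-torsors; write its $U_0$-expression as $\tilde c=A(z)\,c+b(z)$. Matching the $dc_j\wedge dz_i$ part of $\Phi'^{*}\tilde\omega=\omega$ (with $\tilde z_i=z_i$) gives $A=\mathrm{Id}$ on $U_0$, and since an endomorphism of $T^*X_<$ agreeing with the identity on the dense set $U_0$ is the identity, the linear part of $\Phi'$ is globally $\mathrm{Id}$; an isomorphism of $T^*X_<$-torsors with identity linear part preserves the class, so Theorem~\ref{main1}(2) gives $(\tfrac{\tilde\nu_1}2,\tfrac{\tilde\nu_2}2)=(\tfrac{\nu_1}2,\tfrac{\nu_2}2)$. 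Equivalently, and this is the computation I expect to be the genuine obstacle, one argues in coordinates: the $dz\wedge dz$ part forces $b$ to be closed on $U_0$, and comparison with the $U_\infty$-gluing forces $\tilde c_i^{(\infty)}=c_i^{(\infty)}+\tfrac{(\nu_i-\tilde\nu_i)z_i}{2}+z_i^2b_i(z)$ to be regular at $z_i=\infty$; since $H^0(X_<,T^*X_<)=0$ there is no nonzero global translation, and regularity of the term $\tfrac{(\nu_i-\tilde\nu_i)}2\,z_i$ at infinity then forces $\tilde\nu_i=\nu_i$. The delicate point is precisely this passage from local to global: over $U_0$ alone the bundle is trivial and $\omega$ is independent of $\nu$, so no relation between $\nu$ and $\tilde\nu$ is visible there; the exponents live entirely in the $U_0/U_\infty$ gluing, and it is only the requirement that $\Phi'$ extend to a morphism over all of $\mathbb P^1\times\mathbb P^1$—equivalently the vanishing of $H^0(X_<,T^*X_<)$—that eliminates the rescaling freedom left open by Corollary~\ref{trivial extension} and pins down the exponents.
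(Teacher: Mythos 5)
Your proof is correct, and its reduction steps coincide exactly with the paper's: the swap case is handled by the index-exchange lift (producing the alternative $(\nu_2,\nu_1)$), and the ruling-preserving case is normalized to $\phi=\mathrm{id}$ via the diagonal lift, using invariance of $\frac{dz\wedge d\zeta}{(z-\zeta)^2}$ under diagonal Moebius transformations. Where you genuinely differ is the endgame. The paper stays in the $\Par$-coordinates and proves a rigidity statement: a symplectic bundle automorphism covering the identity must preserve the polar locus $\{\zeta_1=z_1\}\cup\{\zeta_2=z_2\}$ of $\omega$, hence restricts on each $\mathbb P^1_{z_1}\times\mathbb P^1_{\zeta_1}$ to an automorphism fixing $z_1$ and preserving the diagonal, so it is the identity; then $\tilde\omega=\omega$ forces $\tilde\nu=\nu$ by comparing coefficients. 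You instead pass to the universal-family coordinates $(z_i,c_i)$, show the linear part of the lift is the identity by matching $dc_j\wedge dz_i$ terms, and invoke the $U_0/U_\infty$ cocycle computation of Theorem~\ref{main1}(2): an affine-bundle isomorphism with identity linear part forces equality of the torsor classes $(\nu_1/2,\nu_2/2)=(\tilde\nu_1/2,\tilde\nu_2/2)$. Your route buys a clean conceptual localization of the exponents --- they are stored precisely in the $H^1(X_<,T^*X_<)$ gluing class, resolving the ambiguity left open by Corollary~\ref{trivial extension} --- and recycles a result the paper has already proved; its only extra cost is the claim (true, standard for Lagrangian fibrations, and arguably contained in the hypothesis ``bundle isomorphism'') that a fiber-preserving symplectomorphism respects the canonical affine structure on fibers. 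The paper's route avoids that point and yields as a by-product the stronger statement that the lift of $\phi$ is unique. One wording slip on your side: what kills $\nu_i-\tilde\nu_i$ is not the vanishing of $H^0(X_<,T^*X_<)$ (that only forces the translation part $b_i\equiv 0$, i.e.\ uniqueness) but the nonvanishing in $H^1$ of the class of $dz_i/z_i$, which is exactly your regularity-at-infinity computation; with that reading, your argument closes completely.
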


\begin{proof}Any automorphism $\phi$ of ${\rm Bun}_{<}(C,D)=X_{<}=\mathbb P^1_{z_1}\times\mathbb P^1_{z_2}$ writes
$$(z_1,z_2)\mapsto(\varphi(z_1),\psi(z_2))\ \ \ \text{or}\ \ \ (\varphi(z_2),\psi(z_1))$$
for some Moebius transformations $\varphi,\psi$. The map $\phi_{D}: (z_1,z_2)\mapsto(z_2,z_1)$ admits a lifting  
$$\Phi_D:((z_1,\zeta_1),(z_2,\zeta_2))\mapsto((z_2,\zeta_2),(z_1,\zeta_1))$$
satisfying  
$$\Phi_D^*\omega=-\frac{1}{2}\left\{\nu_2\frac{dz_1\wedge d\zeta_1}{(z_1-\zeta_1)^2}+\nu_1\frac{dz_2\wedge d\zeta_2}{(z_2-\zeta_2)^2}\right\}$$
(it permutes $\nu_1$ and $\nu_2$). Now, considering $\phi$ isotopic to the identity, we will prove that its lifting $\Phi$ preserves 
coefficients $\nu_1$ and $\nu_2$. First, observe that $\phi$ admits a lifting preserving $\nu_1$ and $\nu_2$, namely
$$\Phi:((z_1,\zeta_1),(z_2,\zeta_2))\mapsto((\varphi(z_1),\varphi(\zeta_1)),(\psi(z_2),\psi(\zeta_2))).$$
Indeed, we can decompose $\varphi$ and $\psi$ as a combination of $z\mapsto \alpha z$, $z\mapsto z+1$ and $z\mapsto1/ z$,
and easily check that, for each of these transformations, $\frac{dz\wedge d\zeta}{(z-\zeta)^2}$ is invariant,
and therefore $\omega$ as well. We end up the proof by showing that there are no other equivariant bundle
isomorphisms, i.e. if $\phi$ is the identity, then so is $\Phi$. Indeed, when we fix $z_2$, the $\Phi$
has to preserve the polar locus of $\omega$ (being symplectic), namely $\zeta_2=z_2$ and $\zeta_1=z_1$.
In restriction to $\zeta_2=z_2$ (recall we have fixed $z_2$) $\Phi$ induces an automorphism of 
$\mathbb P^1_{z_1}\times\mathbb P^1_{\zeta_1}$ preserving the diagonal $\zeta_1=z_1$ and fixing $z_1$.
It is therefore the identity. We easily conclude that $\Phi$ is the identity everywhere.
\end{proof}

It is interesting to make the link with the approach of \cite{LS}. There, a birational model of the moduli 
of parabolic bundles was introduced by using apparent map in restriction to Higgs fields (see 
\cite[Corollary 4.5]{LS}). Precisely, the restriction of apparent map to Higgs fields is given by
$$
 App_{\z}(c_{1}\cdot \Theta_{1}^{0}(\z)+ c_{2}\cdot\Theta_{2}^{0}(\z))=a_0\cdot \frac{1}{(x-t)} + a_1\cdot  \frac{x}{(x-t)} + a_2 \cdot \frac{y}{(x-t)}
 $$
where
$${\footnotesize{
\begin{matrix}
a_0 &=& -\yt( c_1z_1(t-z_1)+c_2z_2(t-z_2) ) \\
a_1 &=& \yt(c_1(t-z_1)+c_2(t-z_2)) \\
a_2 &=& c_1 (t-z_1)^2+c_2(t-z_2)^2
\end{matrix}
}
}$$
Therefore, the image $ App_{\z}\left(\Hig_<^{\nu}(C,D)\right)\subset {\vert\mathcal O_C(w_{\infty}+t_1+t_2)\vert}$
is given by $a_0b_0+a_1b_1+a_2b_2=0$ where
$${\footnotesize{
\begin{matrix}
b_0 &=& 2t-z_1-z_2 \\
b_1 &=& t(z_1+z_2)-2z_1z_2 \\
b_2 &=& r(z_1-z_2)
\end{matrix}
}
}$$
and we get a natural birational map
$$
Bun':{\rm Bun}_{<}(C,D)\to {\vert\mathcal O_C(w_{\infty}+t_1+t_2)\vert}^*\simeq\mathbb P^2_{\bb}\ ;
(z_1,z_2)\mapsto(b_0:b_1:b_2).
$$
This map blow-up the point $(z_1,z_2)=(t,t)$ and contracts the strict transform of the two lines 
$z_1=t$ and $z_2=t$. Another birational model of ${\rm Con}^{\nu}_{<}(C,D)$, more in the spirit of \cite{LS},
is given by 
\begin{equation}\label{DefAppBun}
App\times Bun':{\rm Con}^{\nu}_{<}(C,D)\stackrel{\sim}{\dashrightarrow}\mathbb P^2_{\ba}\times \mathbb P^2_{\bb}
\end{equation}
and we get (compare with \cite[Theorem 1.1]{LS})

\begin{prop}If $\nu_1+\nu_2+1\not=0$, then the map (\ref{DefAppBun}) is birational and the image of the symplectic 
form is given by 
$$\omega=d\eta\ \ \ \text{where}\ \ \ \eta=\left(\frac{\nu_1+\nu_2+1}{4}\right)\frac{a_0db_0+a_1db_1+a_2db_2}{a_0b_0+a_1b_1+a_2b_2}.$$
\end{prop}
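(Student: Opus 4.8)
The plan is to treat separately the birationality of (\ref{DefAppBun}) and the identity $\omega=d\eta$, the latter by an explicit computation in the chart $U_0$.

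For birationality I would reuse the two facts already established. By Theorem \ref{thm:apparentbirat} the map $Bun\times App$ is birational onto $(\mathbb P^1_{z_1}\times\mathbb P^1_{z_2})\times\mathbb P^2_{\ba}$ (here we use $\nu_1+\nu_2+1\neq0$), while the paragraph preceding the statement shows that $Bun':{\rm Bun}_{<}(C,D)\dashrightarrow\mathbb P^2_{\bb}$ is birational, blowing up $(t,t)$ and contracting the two lines $z_i=t$. The map (\ref{DefAppBun}) factors as
$$App\times Bun'=\Big((\z,\ba)\mapsto(\ba,Bun'(\z))\Big)\circ(Bun\times App),$$
a composition of two birational maps, hence birational; concretely its inverse recovers $\z$ from $\bb$ via $(Bun')^{-1}$ and then $\bc$ from $\ba$ via the fibrewise isomorphism $App_{\z}^{-1}$, whose determinant is nonzero over $V$ precisely because $\nu_1+\nu_2+1\neq0$.

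For the identity $\omega=d\eta$, I would first observe that $d\eta$ does not depend on the choice of projective representatives of $\ba,\bb$: rescaling $\ba\mapsto\mu\ba$ and $\bb\mapsto\lambda\bb$ changes $\tfrac{a_0db_0+a_1db_1+a_2db_2}{a_0b_0+a_1b_1+a_2b_2}$ by $d\lambda/\lambda$, hence $\eta$ by a closed $1$-form. I then work in the affine chart $U_0$ with coordinates $(z_1,z_2,c_1,c_2)$ and $c_0=1$, where $\omega=dc_1\wedge dz_1+dc_2\wedge dz_2$ by Theorem \ref{main1}. Reading the columns of the matrix in Theorem \ref{thm:apparentbirat}, I write $\ba=\ba^{(0)}+c_1\ba^{[1]}+c_2\ba^{[2]}$. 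The decisive structural point is that $\sum_i a_i^{[1]}b_i=\sum_i a_i^{[2]}b_i=0$, since the image of the Higgs fields is exactly the line $\sum_i a_ib_i=0$ (established just before the proposition); consequently $P:=\sum_i a_ib_i=\sum_i a_i^{(0)}b_i$ is a function of $\z$ alone.

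The core of the argument is then three substitutions into the explicit formulas of Table \ref{basis} and Theorem \ref{thm:apparentbirat}, giving
$$P=-2r(\nu_1+\nu_2+1)(z_1-t)(z_2-t),\quad \sum_i a_i^{[1]}\,db_i=8r(z_1-t)(t-z_2)\,dz_1,\quad \sum_i a_i^{[2]}\,db_i=8r(z_2-t)(t-z_1)\,dz_2.$$
Dividing the last two by $P$ yields $\tfrac{\nu_1+\nu_2+1}{4}\,\tfrac{\sum_i a_i^{[k]}db_i}{P}=dz_k$, so that $\eta=A+c_1\,dz_1+c_2\,dz_2$ with $A=\tfrac{\nu_1+\nu_2+1}{4}\,\tfrac{\sum_i a_i^{(0)}db_i}{P}$ a $1$-form in the $z$-variables only. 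A last short computation gives
$$A=-\frac{\nu_1-\nu_2-1}{4}\,\frac{dz_1}{z_1-t}+\frac{\nu_1-\nu_2+1}{4}\,\frac{dz_2}{z_2-t},$$
which is exact, so $dA=0$ and therefore $d\eta=dc_1\wedge dz_1+dc_2\wedge dz_2=\omega$. This is consistent with the fact that $A$ is the restriction of $\eta$ to the Lagrangian section $\nabla^0=\{c_1=c_2=0\}$, a fibre of $Bun^-$ (Remark \ref{rk:characNabla0}). The only obstacle is the bookkeeping in these polynomial identities; they are routine once the basis is substituted, and the complementary chart $U_\infty$, together with the two points outside $U_0\cup U_\infty$, are handled identically using the $\infty$-basis.
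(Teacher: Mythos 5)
Your proposal is correct and is essentially the argument the paper intends: the proposition is stated there without a written proof, but the surrounding text sets up exactly the ingredients you use (birationality of $Bun\times App$ from Theorem \ref{thm:apparentbirat}, birationality of $Bun'$, the incidence relation $a_0b_0+a_1b_1+a_2b_2=0$ on the image of the Higgs fields, and $\omega=dc_1\wedge dz_1+dc_2\wedge dz_2$ over the chart $U_0$), so your factorization $App\times Bun'=\sigma\circ(Bun\times App)$ plus the chart computation is the intended route. I checked your three key identities against Table \ref{basis} and the $App_{\z}$ formulas in the proof of Theorem \ref{thm:apparentbirat} — namely $\sum_i a_ib_i=-2r(\nu_1+\nu_2+1)(z_1-t)(z_2-t)$, $\sum_i a_i^{[1]}db_i=8r(z_1-t)(t-z_2)\,dz_1$, $\sum_i a_i^{[2]}db_i=8r(z_2-t)(t-z_1)\,dz_2$, together with the closed form $A=-\tfrac{\nu_1-\nu_2-1}{4}\tfrac{dz_1}{z_1-t}+\tfrac{\nu_1-\nu_2+1}{4}\tfrac{dz_2}{z_2-t}$ — and they all hold, giving $d\eta=dc_1\wedge dz_1+dc_2\wedge dz_2=\omega$ as claimed.
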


The polar locus of $\omega$ is supported by the incidence variety $a_0b_0+a_1b_1+a_2b_2=0$.

\section{Appendix}

\subsection{Parabolic vector bundles}\label{def parabolic}

Let $C$ be a smooth irreducible projective complex curve and $D=t_{1}+\cdots +t_{n}$ be a reduced divisor supported on $n$ distinct points $\{t_{1},...,t_{n}\}\subset C$.  A rank two \textit{quasi-parabolic vector bundle} on $(C,D)$ is the data $(E, \textbf{p})$ where $E$ is a rank two holomorphic vector bundle over $C$ and $\textbf{p} = \{p_{1},..., p_{n}\}$ are given $1$-dimensional subspaces $p_{k}\subset E_{t_{k}}$ for $k\in \{1,...,n\}$. An isomorphism between quasi-parabolic vector bundles is, by definition, an isomorphism between underlying vector bundles preserving parabolic directions. A \textit{parabolic vector bundle} is a quasi-parabolic vector bundle together with a collection of weights $\mu = (\mu_{1},..., \mu_{n}) \in [0,1]^{n}$. It allows us to introduce a notion of stability in order to introduce a good moduli space. Given a line bundle $L\subset E$, the $\mu$-stability index of $L$ is the real number 
$$
{\rm Stab}(L):= \deg E - 2 \deg L + \sum_{p_{k}\neq L_{t_{k}}} \mu_{k} - \sum_{p_{k}= L_{t_{k}}} \mu_{k}.
$$ 

\begin{definition}\label{def stab}
A parabolic vector bundle $(E,\textbf{p})$ is called $\mu$-stable (resp. $\mu$-semistable) if for any rank one subbundle $L\subset E$, the following inequality holds
$$
{\rm Stab}(L) > 0 \;\;\; (resp. \; {\rm Stab}(L) \ge 0). 
$$
\end{definition}

It is well known, see \cite{MS}, that the moduli space of $\mu$-semistable parabolic vector bundles with fixed determinant line bundle is a normal irreducible projective variety. The open subset of $\mu$-stable parabolic bundles is smooth. We note that the stability index of $L\subset E$ is zero if, and only if, the weights lie along the following hyperplane in $[0,1]^{n}$
$$
\deg(E) - 2 \deg L + \sum_{p_{k}\neq L_{t_{k}}} \mu_{k} - \sum_{p_{k}= L_{t_{k}}} \mu_{k} = 0. 
$$
Each one of these hyperplanes is called a \textit{wall}. If we cut out $[0,1]^{n}$ by all possible walls, 
one gets in the complement of finitely many irreducible connected components, which are called \textit{chambers}. 
In each chamber, any $\mu$-semistable parabolic vector bundle is $\mu$-stable, 
and the moduli space is constant, that is, it is independent of $\mu$. Nevertheless, it can be empty.  
When we have two adjacent chambers separated by a wall, then there is a locus of $\mu$-stable parabolic bundles 
in each chamber that became unstable when we cross the wall. Along the wall, 
we identify strictly semistable parabolic bundles $(E,\textbf{p})$ and $(E',\textbf{p'})$ 
with ${\rm gr}(E,\textbf{p})= {\rm gr}(E',\textbf{p'})$, see \cite[Section 4]{MS}. Over the projective line,  
the description  of the moduli space of quasi-parabolic bundles has been done in \cite{Bauer}. 

\subsection{Elementary transformations} In the construction of the moduli space of quasi-parabolic bundles, the determinant line bundle is fixed. Actually, up to twists and elementary transformations, we can choose the determinant bundle
arbitrarily, for instance the trivial line bundle $\mathcal O_{C}$. Twists preserve the parity of the determinant and elementary transformations change it.  We start by recalling what is an elementary transformation as well as its main properties.  Given $t\in C$ and a direction $p\subset E_{t}$ the vector bundle $E^{-}$ is defined by the following exact sequence of sheaves 
$$
0\longrightarrow E^{-} {\longrightarrow}  E {\longrightarrow}  E/p \longrightarrow 0.
$$
where $p$ appearing above is considered as a sky-scrapper sheaf. The new parabolic direction $p^{-} \subset E^{-}$ is the kernel of the morphism $E^{-} \longrightarrow E$. By identifying sections of $E$ and $E^{-}$ outside of $t$, one obtains a birational bundle transformation
$$
elem^{-}_{t} :E \dashrightarrow E^{-}
$$
with center at $p$, which is an isomorphism outside $t$. We shall say that it is a \textit{negative elementary transformation}. At a neighborhood of $t$, it can be described as follows. We can choose a local trivialization  $E|_{U}\simeq U\times \mathbb C^{2}$ such that \begin{eqnarray*}
p=\left(
\begin{array}{ccc} 
0   \\
1 \\
\end{array}
\right)
\subset E_{t}.
\end{eqnarray*}
and $elem^{-}_{t}: E|_{U}  \dashrightarrow E^{-}|_{U}$  is given by 
\begin{eqnarray*}
elem^{-}_{t}(x, Y) =
\left(
\begin{array}{ccc} 
1/x & 0  \\
0 & 1  \\
\end{array}
\right)\cdot Y.
\end{eqnarray*}
From the point of view of ruled surfaces, it corresponds to a flip with center at $[p]\in \mathbb P E$, that is, a blow up at $[p]$ followed by a contraction of the old fiber. If the direction $p$ is contained in a line bundle $L\subset E$, then it is left unchanged and one obtains a line bundle $L\subset E^{-}$. If $L\subset E$ does not contain $p$, then we get $L^{-}\subset E$ where 
$$
L^{-}= L\otimes \mathcal O_{C}(-t). 
$$
In addition, we have the following property 
$$
\det (E^{-}) = \det (E) \otimes \mathcal O_{C}(-t).
$$

When we perform an elementary transformation, the stability condition is preserved after an appropriate 
modification of weights. If $(E, \textbf{p})$ is $\mu$-stable and we perform an elementary transformation 
$elem^{-}_{t_{k}}$, then $(E^{-}, \textbf{p}^{-})$ is $\mu'$-stable where
\begin{eqnarray*}
\left\{ \begin{array}{ll}
\mu_{k}' = 1- \mu_{k} \\
\mu_{j}' = \mu_{j} \;\; j\neq k. 
\end{array} \right.
\end{eqnarray*}
If ${\rm Bun}_{L}^{\mu}(C,D)$ denotes the moduli space of $\mu$-semistable parabolic vector bundles with fixed determinant line bundle $L$, then $elem^{-}_{t_{k}}$ defines an isomorphism between moduli spaces
\begin{eqnarray*}
elem^{-}_{t_{k}}: {\rm Bun}_{L}^{\mu}(C,D) &\longrightarrow& {\rm Bun}_{L(-t_{k})}^{\mu'}(C,D)\\
(E, \textbf{p}) &\mapsto& elem^{-}_{t_{k}}(E, \textbf{p}).
\end{eqnarray*}

We can  define a \textit{positive elementary transformation} $elem^{+}_{t}$ as 
 $$
 elem^{+}_{t} := \mathcal O_{C}(t)\otimes elem^{-}_{t} : E\dashrightarrow E^{+}
 $$
 where $E^{+} = E^{-}\otimes \mathcal O_{C}(t)$. It is the inverse of $elem^{-}_{t}$. 
 As before, stability condition is preserved by elementary positive transformations, 
 with the same modification of weights. 

\subsection{Endomorphisms of quasi-parabolic vector bundles}

The space of global endomorphisms of a rank two vector bundle 
as well as the automorphism group are well known, see for example \cite[Theorem 1]{Ma}. 
In this section, we study the space of traceless endomorphisms of a quasi-parabolic bundles over an elliptic curve. 

Let $(E,\textbf{p})$ be a quasi-parabolic bundle. We say that an endomorphism $\sigma \in {\rm End}(E)$ fixes 
the parabolic structure $\textbf{p}$  if $\sigma(p_k) \subset p_k$, for all $k=1,..., n$. Let ${\rm End}(E,\textbf{p})$ 
be the vector space of endomorphisms fixing the parabolic structure $\textbf{p}$ and ${\rm End}_0(E,\textbf{p})$ 
its subspace of traceless endomorphisms. We notice that we have a canonical decomposition:
$$
{\rm End}(E,\textbf{p})= < Id > \oplus \;{\rm End}_0(E,\textbf{p}) 
$$
where $Id \in {\rm End}(E,\textbf{p})$ is the identity. This follows from the following simple remark: if $A$ is a $2\times 2$ matrix then
$$
A = \frac{{\rm tr}(A)}{2}\cdot Id+  \left (A-\frac{{\rm tr}(A)}{2}\cdot Id\right ).
$$

\begin{lemma}\label{endom}
Let $E=\mathcal O_C\oplus L$ be a rank $2$ bundle over a projective smooth curve $C$ with $\mathcal L$ of nonnegative degree. The following statements hold true.
\begin{enumerate}
\item[(a)] If $L \ncong \mathcal O_C$ then
\begin{eqnarray*}
{\rm End}_0(E) = \left\{
\left(
\begin{array}{ccc} 
a & 0  \\
\gamma & -a  \\
\end{array}
\right)
\;;\;\; a\in\mathbb C,\; {\color{black}\gamma=\{\gamma_i\}\in {\rm H}^0(C,L)}
\right\}
\end{eqnarray*}
\item[(b)] If $\mathcal L = \mathcal O_C$ then
\begin{eqnarray*}
{\rm End}_0(E) = \left\{
\left(
\begin{array}{ccc} 
a & b  \\
c & -a  \\
\end{array}
\right)
\;;\;\; a,b,c \in\mathbb C
\right\}
\end{eqnarray*}
\end{enumerate}
\end{lemma}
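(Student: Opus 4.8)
The plan is to realize an arbitrary $\sigma\in {\rm End}(E)$, $E=\mathcal O_C\oplus L$, in block form with respect to the given splitting and to identify each entry with a global section of the relevant sheaf of homomorphisms. Writing
$$\sigma=\begin{pmatrix} a & b \\ c & d \end{pmatrix},$$
the four entries live in $a\in {\rm H}^0(C,\mathcal Hom(\mathcal O_C,\mathcal O_C))={\rm H}^0(C,\mathcal O_C)$, $d\in {\rm H}^0(C,\mathcal Hom(L,L))={\rm H}^0(C,\mathcal O_C)$, $b\in {\rm H}^0(C,\mathcal Hom(L,\mathcal O_C))={\rm H}^0(C,L^{-1})$ and $c\in {\rm H}^0(C,\mathcal Hom(\mathcal O_C,L))={\rm H}^0(C,L)$, using ${\rm End}(E)={\rm H}^0(C,\mathcal End(E))$ together with the decomposition of $\mathcal End(\mathcal O_C\oplus L)$ into these four summands. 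Since $C$ is smooth projective and connected, ${\rm H}^0(C,\mathcal O_C)=\mathbb C$, so $a$ and $d$ are scalars, and the traceless condition ${\rm tr}(\sigma)=a+d=0$ forces $d=-a$.

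The heart of the argument is to control the two off-diagonal sections through the degree of $L$. The lower-left entry $c$ is allowed to be an arbitrary element of ${\rm H}^0(C,L)$, which matches the $\gamma\in {\rm H}^0(C,L)$ appearing in case (a). For the upper-right entry $b\in {\rm H}^0(C,L^{-1})$ I would split into two subcases according to $\deg L$. If $\deg L>0$, then $\deg L^{-1}<0$ and hence ${\rm H}^0(C,L^{-1})=0$, so $b=0$. If $\deg L=0$ but $L\ncong\mathcal O_C$, then a nonzero section of $L^{-1}$ would define an injection $\mathcal O_C\hookrightarrow L^{-1}$ between line bundles of equal degree, hence an isomorphism, contradicting $L\ncong\mathcal O_C$; so again ${\rm H}^0(C,L^{-1})=0$ and $b=0$. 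Combining $b=0$, $d=-a$ and $c=\gamma\in {\rm H}^0(C,L)$ yields precisely the description in (a).

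For case (b), where $L=\mathcal O_C$, all four Hom sheaves are $\mathcal O_C$, so $a,b,c,d\in\mathbb C$, and imposing $d=-a$ produces the stated matrix with $a,b,c\in\mathbb C$. The only delicate point — and the sole place where the hypothesis $L\ncong\mathcal O_C$ in (a) is genuinely used — is the vanishing of ${\rm H}^0(C,L^{-1})$ in the degree-zero nontrivial case, which is exactly what separates (a) from (b). Everything else is a routine identification of the blockwise Hom-groups with their spaces of global sections, so I expect no further obstacle.
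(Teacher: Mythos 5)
Your proof is correct, and it is exactly the routine verification the authors had in mind: the paper itself gives no argument, stating only ``We leave the proof to the reader; see for example \cite[Theorem 1]{Ma}.'' Your block decomposition of ${\rm End}(\mathcal O_C\oplus L)$ into the four Hom-sheaves, together with ${\rm H}^0(C,\mathcal O_C)=\mathbb C$ and the vanishing of ${\rm H}^0(C,L^{-1})$ (by negative degree when $\deg L>0$, and by the nowhere-vanishing-section argument when $\deg L=0$ with $L\ncong\mathcal O_C$), fills in that omitted proof completely and correctly.
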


\proof
We leave the proof to the reader; see for example \cite[Theorem 1]{Ma}.
\endproof

We recall that a quasi-parabolic bundle $(E,\textbf{p})$ is decomposable if there exists a decomposition $E=L_1\oplus L_2$ such that each parabolic direction is contained in $L_1$ or $L_2$. In this case, we write 
$$
(E,\textbf{p}) = (L_1,{\bf{p_1}}) \oplus  (L_2,{\bf{p_2}}).
$$

\subsection{The case of elliptic curves}
In what follows we will determine the traceless endomorphisms of an indecomposable quasi-parabolic bundle over an elliptic curve. This will be useful to assure existence of logarithmic connections. Before that, we shall give one example.   

\begin{example}
Let $(\mathcal O_C\oplus \mathcal O_C (t_1), \{p_1,p_2\})$ be a quasi-parabolic bundle over an elliptic curve $(C,\{t_1,t_2\})$ with 
\begin{enumerate}
\item $p_1$  outside $\mathcal O_C|_{t_1}$ and $\mathcal O_C (t_1)|_{t_1}$; and
\item $p_2 \subset \mathcal O_C (t_1)|_{t_2}$.
\end{enumerate}
It is indecomposable because any subbundle given by an embedding of the trivial bundle $\mathcal O_C\hookrightarrow \mathcal O_C\oplus \mathcal O_C (t_1)$ corresponds to a section of $\mathbb P (\mathcal O_C\oplus \mathcal O_C (t_1))$ which has $(1:0)$ as a base point over $t_1$. In fact, since $C$ is elliptic $h^0 (\mathcal O_C (t_1))=1$. On the other hand, {\color{black}if $\gamma\in {\rm H}^0(C,\mathcal O_C (t_1))$ is a section} which corresponds to the divisor $D=t_1$ then 
 \begin{eqnarray*}
{\rm End_0}(E,\textbf{p}) = \left\{
\left(
\begin{array}{ccc} 
0 & 0  \\
c\gamma & 0  \\
\end{array}
\right)
\;;\;\; c \in\mathbb C
\right\}.
\end{eqnarray*}
\end{example}

\begin{prop}\label{case decomposable}
Let $E=\mathcal O_C \oplus L$, $\deg (L)\ge 0$, be a decomposable rank $2$ bundle over an elliptic curve $C$. Assume $(E,\textbf{p})$ is indecomposable but not simple, i.e. ${\rm End}_0(E,\textbf{p}) \not= \{ 0 \}$. 
Then we are in the following case
\begin{enumerate}
\item ${\rm End}_0(E,\textbf{p}) = \mathbb C$, 
\item the support $D$ of parabolics splits as $D=D_0+D_1$ with $\deg(D_1)>0$,
\item $L\simeq\mathcal O_C(D_1)$ (and has $>0$ degree),
\item parabolics over $D_0$ are lying on $L$,
\item parabolics over $D_1$ are outside $L$ and generic.
\end{enumerate}
Here, generic means that there is no embedding $\mathcal O_C\hookrightarrow E$ passing through 
all parabolics over $D_1$, i.e. $(E,\textbf{p})$ is indecomposable.
\end{prop}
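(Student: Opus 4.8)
The plan is to analyze the structure of $\mathrm{End}_0(E,\textbf{p})$ for a decomposable bundle $E=\mathcal{O}_C\oplus L$ with $\deg(L)\geq 0$ by first computing $\mathrm{End}_0(E)$ via Lemma \ref{endom}, and then imposing the parabolic conditions $\sigma(p_k)\subset p_k$. First I would dispose of the case $L\simeq\mathcal{O}_C$: here $\mathrm{End}_0(E)$ is the full $\mathfrak{sl}_2$ by part (b) of Lemma \ref{endom}, and a quasi-parabolic bundle on $\mathcal{O}_C\oplus\mathcal{O}_C$ is indecomposable but not simple only in degenerate configurations that do not yield $\deg(L)>0$; I expect the genuine case is $L\ncong\mathcal{O}_C$, where part (a) gives
$$\mathrm{End}_0(E)=\left\{\begin{pmatrix} a & 0 \\ \gamma & -a\end{pmatrix}\;;\;a\in\mathbb{C},\ \gamma\in\mathrm{H}^0(C,L)\right\}.$$

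Next I would impose the parabolic conditions on a general $\sigma=\left(\begin{smallmatrix} a & 0 \\ \gamma & -a\end{smallmatrix}\right)$. For each parabolic $p_k\subset E|_{t_k}$, the condition $\sigma(p_k)\subset p_k$ is either automatically satisfied or cuts down the parameters. The key dichotomy is whether $p_k$ lies on the subbundle $L$, on the subbundle $\mathcal{O}_C$, or is generic (off both). If $p_k\subset L|_{t_k}$, then $L$ being invariant, the condition reads that $p_k$ is an eigenline, which holds for any such $\sigma$. If $p_k$ is off $L$, then invariance forces a linear relation involving $a$ and the value $\gamma(t_k)$; generically this kills the diagonal part $a$ and simultaneously constrains $\gamma$ to vanish at $t_k$ unless the parabolic is aligned with the image of the nilpotent. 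So I would partition the support as $D=D_0+D_1$ where $D_0$ collects the points whose parabolic lies on $L$ and $D_1$ the points whose parabolic lies off $L$ (after checking no parabolic can lie on $\mathcal{O}_C$ without forcing decomposability, since then an embedding $\mathcal{O}_C\hookrightarrow E$ would split off a parabolic).

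The heart of the argument is then to show that a nonzero traceless parabolic endomorphism must be nilpotent, i.e. $a=0$, and that the surviving $\gamma\in\mathrm{H}^0(C,L)$ is forced to vanish exactly along $D_1$, giving $L\simeq\mathcal{O}_C(D_1)$ and $\mathrm{End}_0(E,\textbf{p})=\mathbb{C}\cdot\left(\begin{smallmatrix} 0 & 0 \\ \gamma & 0\end{smallmatrix}\right)\simeq\mathbb{C}$. Concretely: the parabolics off $L$ over $D_1$ force $a=0$ (each imposes $a=0$ once $\gamma(t_k)\neq 0$, which must hold lest the parabolic lie on $\mathcal{O}_C$), so $\sigma$ is strictly lower-triangular; invariance of the off-$L$ parabolic directions then demands $\gamma$ vanish at each point of $D_1$, so $\gamma$ is a section of $L(-D_1)$ with a zero structure giving $\mathrm{H}^0(C,L(-D_1))\neq 0$. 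Since on an elliptic curve a nonzero global section of a degree-$d$ line bundle has exactly $d$ zeros, non-simplicity ($\gamma\not\equiv 0$) combined with $\gamma|_{D_1}=0$ forces $\deg L\geq\deg D_1$; the reverse inequality comes from the requirement that $\gamma$ not produce an $\mathcal{O}_C$-embedding through the remaining parabolics (genericity over $D_1$), pinning down $L\simeq\mathcal{O}_C(D_1)$ with $\deg(D_1)>0$ and $\dim\mathrm{H}^0(C,L(-D_1))=1$.

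The main obstacle I anticipate is the careful bookkeeping distinguishing the three possible positions of each parabolic and proving that the \emph{generic} (genericity) condition in item (5) is exactly equivalent to indecomposability — that is, showing that if the parabolics over $D_1$ did all lie on a single embedding $\mathcal{O}_C\hookrightarrow E$, the bundle would decompose, contradicting the hypothesis. This uses that embeddings $\mathcal{O}_C\hookrightarrow\mathcal{O}_C\oplus L$ correspond to sections of $L$, and the section vanishing or not at the $t_k$ governs whether a splitting compatible with parabolics exists; I would argue as in Proposition \ref{prop char} and Proposition \ref{not simple} that the one-parameter family of embeddings $\mathcal{O}_C\hookrightarrow E$ has a base locus forcing the indecomposability to be detected precisely by $\dim\mathrm{H}^0(C,L(-D_1))=1$, giving $\mathrm{End}_0(E,\textbf{p})=\mathbb{C}$ and closing the proof.
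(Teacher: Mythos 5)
Your proposal follows the paper's skeleton --- reduce to $L\ncong\mathcal O_C$ via Lemma \ref{endom}, write a traceless parabolic endomorphism as $\sigma=\left(\begin{smallmatrix} a & 0\\ \gamma & -a\end{smallmatrix}\right)$ with $a\in\mathbb C$, $\gamma\in{\rm H}^0(C,L)$, split $D=D_0+D_1$ according to whether $p_k$ lies on $L$, and identify ${\rm End}_0(E,\textbf{p})$ with ${\rm H}^0(C,L(-D_1))$ --- but your argument for the crucial step, the nilpotency $a=0$, is wrong. Writing $p_k=\mathbb C\cdot(1,v_k)$ for a parabolic off $L$, the invariance condition $\sigma(p_k)\subset p_k$ reads $\gamma(t_k)=2av_k$. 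When $\gamma(t_k)\neq0$ and $v_k\neq0$, this does \emph{not} impose $a=0$; it imposes $a=\gamma(t_k)/(2v_k)\neq0$. So your parenthetical ``each imposes $a=0$ once $\gamma(t_k)\neq 0$'' is backwards, and the argument is moreover internally inconsistent: you first insist that $\gamma(t_k)\neq0$ at every point of $D_1$, and two lines later conclude that $\gamma$ vanishes on all of $D_1$. No collection of such pointwise conditions can ever force $a=0$; the vanishing of $a$ is a \emph{global} consequence of indecomposability. The paper's argument is: if $a\neq0$, then $\sigma$ has distinct eigenvalues $\pm a$ in every fiber; the $(-a)$-eigenlines form $L$, and the $a$-eigenlines form the nowhere-vanishing section $(1,\gamma/(2a))$, i.e.\ an embedding $\mathcal O_C\hookrightarrow E$. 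Since each $p_k$ is $\sigma$-invariant, it must be one of these two eigenlines, so every parabolic lies on $L$ or on that embedding, exhibiting a decomposition of $(E,\textbf{p})$ --- contradicting indecomposability. Hence $a=0$.

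A related error feeds into this: your claim that no parabolic can lie on an embedding $\mathcal O_C\hookrightarrow E$ ``without forcing decomposability'' is false. Decomposability requires \emph{all} parabolics to be distributed over two complementary subbundles, not merely one parabolic sitting on a line subbundle; indeed, in the paper's own normal form (see the remark following the proposition) all but one of the parabolics over $D_1$ lie on a common embedding $\mathcal O_C\hookrightarrow E$, and invariance at such a point forces $\gamma(t_k)=0$ without constraining $a$ at all. (Your disposal of the case $L\simeq\mathcal O_C$ is likewise only asserted; the correct statement is that indecomposability on the trivial bundle forces at least three pairwise distinct parabolic directions, while a nonzero traceless constant matrix fixes at most two, so such bundles are simple.) Once $a=0$ is established correctly, the rest of your plan is viable and in fact slightly different from the paper's: you would rule out $\deg L>\deg D_1$ by observing that then the evaluation map ${\rm H}^0(C,L)\to\bigoplus_{t_k\in D_1}L\vert_{t_k}$ is surjective, producing an embedding through all $D_1$-parabolics and hence a decomposition, whereas the paper normalizes the splitting so as to maximize the number of parabolics on the first factor and invokes the fact that the only linear systems with base points on an elliptic curve are those of the form $\vert t\vert$. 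But as written, the proposal does not prove the proposition.
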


Observe that, by Lemma \ref{endom}, one can find an embedding $\mathcal O_C\hookrightarrow E$ passing through
all but one parabolics over $D_1$. In particular, given the decomposition $D=D_0+D_1$, there is a unique
such parabolic bundle $(E,\textbf{p})$ up to isomorphism.

\proof
If $L = \mathcal O_C$  and $(E,\textbf{p})$ is indecomposable, then there are at least $3$ parabolics which not two of them lies in the same embedding of $\mathcal O_C   \hookrightarrow \mathcal O_C\oplus \mathcal O_C$. Then it follows from Lemma \ref{endom} item $(b)$ that 
$$
{\rm End}_0(\mathcal O_C\oplus \mathcal O_C,\textbf{p}) = \{0\}.
$$
Let us suppose $L \ncong \mathcal O_C$ and let $\phi$ be a traceless endomorphism that fixes the parabolics. Lemma \ref{endom} item $(a)$ implies that we can choose a covering of $C$ and trivializations such that the vector $e_1$ generates $\mathcal O_C$, $e_2$ generates $L$ and
\begin{eqnarray*}
\phi =
\left(
\begin{array}{ccc} 
a & 0  \\
\gamma & -a  \\
\end{array}
\right)
\end{eqnarray*}
where $a\in \mathbb C$, $\gamma\in {\rm H}^0(C,L)$. 
If $a\neq 0$, then we see that  the locus of fixed points of $\phi$ outside $L$ defines a section of $E$ 
\begin{eqnarray*}
\left(
\begin{array}{ccc} 
1   \\
\frac{\gamma}{2a}   \\
\end{array}
\right)
\end{eqnarray*}
which generates a subbundle $\mathcal O_C   \hookrightarrow E$ containing all parabolics outside $L$, 
showing that $(E,\textbf{p})$ must be decomposable. We can thus assume $a=0$.

Since $\phi$ preserves the parabolic structure, we have for each parabolic $p_k$
\begin{itemize}
\item either $p_k$ is in the subbundle $L\subset E$,
\item or the support $t_k$ of $p_k$ is a zero of $\gamma$. 
\end{itemize}
Therefore, we can decompose $D=D_0+D_1$ where $D_0$ is the support of those $p_k$'s lying in $L$.

Since $(E,\textbf{p})$ is indecomposable, there is no embedding $\mathcal O_C\hookrightarrow E$ passing through 
all parabolics over $D_1$. Assume that our decomposition $E=\mathcal O_C \oplus L$ maximize the number
of parabolics lying on the first factor. Set $D_1=D_1'+D_1''$ with $D_1'$ supporting those parabolics in $\mathcal O_C$
and $\deg(D_1'')>0$.
By maximality, we have that each section $\varphi\in H^0(C,L)$ which vanishes on $D_1'$ automatically
vanishes on $D_1''$. In other words, each section of $L\otimes\mathcal O_C(-D_1')$ automatically
vanishes on $D_1''$. On the other hand, we know that $L\otimes\mathcal O_C(-D_1')$ admits a non zero
section defined by $\gamma$. But on the elliptic curve $C$, the only linear systems with base points
are of the form $\vert t_k\vert$, meaning that $L\otimes\mathcal O_C(-D_1')\simeq\mathcal O_C(t_k)$ and 
$D_1''=t_k$ reduces to a single point. We therefore deduce that $L\simeq\mathcal O_C(D_1)$ and 
all parabolics but $p_k$, over $D_1$, are lying on the first factor $\mathcal O_C\subset E$.\endproof

Let $E_0$ be the unique indecomposable rank $2$ bundle, over an elliptic curve, with trivial determinant and having $\mathcal O_C$ as maximal subbundle. It corresponds to the non trivial extension defined by the following exact sequence 
$$
0\longrightarrow \mathcal O_C {\longrightarrow}  E_0{\longrightarrow}  \mathcal O_C \longrightarrow 0.
$$

\begin{prop}\label{case indecomposable}
Let $(E_0,\textbf{p})$ be a quasi-parabolic bundle, where $E_0$ is the indecomposable bundle as above.
\begin{enumerate}
\item If all parabolics lie in the maximal subbundle $\mathcal O_C\hookrightarrow E_0$ then
$$
{\rm End}_0(E_0,\textbf{p}) \simeq \mathbb C
$$

\item  If there exists at least one parabolic outside $\mathcal O_C\hookrightarrow E_0$ then
$$
{\rm End}_0(E_0,\textbf{p}) = \{0\}.
$$
\end{enumerate}
\end{prop}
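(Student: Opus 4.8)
The plan is to first pin down the one-dimensional space ${\rm End}_0(E_0)$ of traceless endomorphisms of $E_0$ by itself, and only afterwards impose the parabolic condition. Since $E_0$ is indecomposable, its endomorphism ring is local, so ${\rm End}(E_0)={\mathbb C}\cdot{\rm Id}\oplus{\rm End}_0(E_0)$ with every element of ${\rm End}_0(E_0)$ nilpotent. To compute the dimension I would apply ${\rm Hom}(E_0,-)$ to the defining sequence $0\to\mathcal O_C\to E_0\to\mathcal O_C\to 0$. Using that the connecting map ${\rm Hom}(\mathcal O_C,\mathcal O_C)\to{\rm Ext}^1(\mathcal O_C,\mathcal O_C)$ is cup product with the nonzero extension class, one first gets ${\rm Hom}(E_0,\mathcal O_C)={\mathbb C}$; then the composition ${\rm End}(E_0)\to{\rm Hom}(E_0,\mathcal O_C)$ with the quotient $\pi:E_0\to\mathcal O_C$ is surjective, since ${\rm Id}$ maps to the generator $\pi$. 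Hence $\dim{\rm End}(E_0)=2$ and ${\rm End}_0(E_0)={\mathbb C}\cdot N$ is spanned by the single nilpotent $N=\iota\circ\pi$, where $\iota:\mathcal O_C\hookrightarrow E_0$ is the maximal subbundle. (Alternatively, this already follows from the shape of ${\rm Aut}(E_0,\textbf{p})$ recorded after Corollary \ref{cor criterion}.)

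The crucial observation is then geometric: $N^2=0$ and both $\ker N$ and $\operatorname{im}N$ equal the maximal subbundle $\mathcal O_C\hookrightarrow E_0$, so fiberwise $N|_{E_0|_t}$ is a rank-one nilpotent whose kernel and image are both the line $\mathcal O_C|_t$. I would use this to decide, for each parabolic direction $p_k\subset E_0|_{t_k}$, when a multiple $c\,N$ preserves $p_k$: if $p_k=\mathcal O_C|_{t_k}$ then $N(p_k)=0\subset p_k$, so $N$ preserves it automatically; whereas if $p_k\neq\mathcal O_C|_{t_k}$ then $N(p_k)=\mathcal O_C|_{t_k}$ is a line distinct from $p_k$, so that $c\,N(p_k)\subset p_k$ forces $c=0$.

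With this dichotomy both statements are immediate. For $(1)$, if every parabolic lies in the maximal subbundle then $N$ preserves all of $\textbf{p}$, so ${\rm End}_0(E_0,\textbf{p})$ contains ${\mathbb C}\cdot N$; being a subspace of the one-dimensional ${\rm End}_0(E_0)$, it equals ${\mathbb C}\cdot N\simeq{\mathbb C}$. For $(2)$, any element of ${\rm End}_0(E_0,\textbf{p})\subset{\rm End}_0(E_0)$ is of the form $c\,N$; picking one index $k$ with $p_k\not\subset\mathcal O_C|_{t_k}$, the parabolic condition at $t_k$ forces $c=0$, whence ${\rm End}_0(E_0,\textbf{p})=\{0\}$.

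The only genuinely substantive step is the first one—identifying ${\rm End}_0(E_0)={\mathbb C}\cdot N$ together with the description of $\ker N$ and $\operatorname{im}N$ as the maximal subbundle. This is the standard structure of endomorphisms of the Atiyah bundle, and once it is available the remaining parabolic bookkeeping is routine.
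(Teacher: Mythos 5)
Your proof is correct, and its second half---the dichotomy on whether $p_k$ lies in $\mathcal O_C|_{t_k}$, using that the generator $N$ of ${\rm End}_0(E_0)$ kills the maximal subbundle and maps every fiber into it---is exactly the paper's argument. Where you genuinely differ is in how the key input ${\rm End}_0(E_0)=\mathbb C\cdot N$ is obtained: the paper simply asserts, in local trivializations where $e_1$ generates the maximal subbundle $\mathcal O_C\hookrightarrow E_0$, that traceless endomorphisms are the constant strictly upper-triangular matrices (the standard coordinate description, cf.\ \cite[Theorem 1]{Ma}), whereas you derive it intrinsically from locality of the endomorphism ring of an indecomposable bundle together with the long exact sequences attached to the defining extension. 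Your route is self-contained and makes the identification $N=\iota\circ\pi$, hence $\ker N={\rm im}\,N=\mathcal O_C$, transparent without choosing trivializations; the paper's route is shorter because it leans on a known classification. Two small corrections. First, the connecting map ${\rm Hom}(\mathcal O_C,\mathcal O_C)\to{\rm Ext}^1(\mathcal O_C,\mathcal O_C)$ you invoke comes from applying ${\rm Hom}(-,\mathcal O_C)$ to the extension, not ${\rm Hom}(E_0,-)$; your computation really uses both functors in succession (first ${\rm Hom}(-,\mathcal O_C)$ to get ${\rm Hom}(E_0,\mathcal O_C)=\mathbb C\cdot\pi$, then ${\rm Hom}(E_0,-)$ plus surjectivity of $\phi\mapsto\pi\circ\phi$ to get $\dim{\rm End}(E_0)=2$), and this should be stated. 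Second, your parenthetical alternative---reading off ${\rm End}_0(E_0)$ from the displayed ${\rm Aut}(E_0,\textbf{p})$ after Corollary \ref{cor criterion}---would be circular: the paper deduces that description of the automorphism group from the present proposition, via Remark \ref{remark simple}. Since your main argument does not use it, this costs you nothing, but the aside should be dropped.
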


\proof
The traceless endomorphism space of $E_0$ is given by
\begin{eqnarray*}
{\rm End_0}(E_0) = \left\{
\left(
\begin{array}{ccc} 
0 & b  \\
0 & 0  \\
\end{array}
\right)
\;;\;\; b\in\mathbb C
\right\}.
\end{eqnarray*}
Here we are considering that in local charts $U_i\subset C$, $e_1$ generates the maximal subbundle $\mathcal O_C$.  Then any traceless endomorphism leaves $\mathcal O_C$ invariant. Also if a parabolic direction outside $\mathcal O_C$ is fixed, one gets $b=0$. This is enough to conclude the proof of  proposition.  
 \endproof

\begin{prop}\label{totheorem}
Let $(E,\textbf{p})$ be indecomposable but not simple rank $2$ quasi-parabolic bundle over an elliptic curve $C$. Assume $E$ has trivial determinant line bundle. Then, up to elementary transformations and twists, we can assume $E=E_{0}$ with all the parabolic lying in the maximal subbundle $\mathcal O_C\hookrightarrow E_0$. 
\end{prop}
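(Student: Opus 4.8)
The plan is to reduce the whole statement to a single application of Proposition~\ref{case indecomposable}. The guiding observation is that the property of being \emph{indecomposable but not simple} (i.e. indecomposable with ${\rm End}_0(E,\textbf{p})\neq\{0\}$) is preserved both under twists by line bundles and under elementary transformations centered at a parabolic point. For twists this is clear, since $\otimes M$ induces an isomorphism ${\rm End}(E,\textbf{p})\simeq{\rm End}(E\otimes M,\textbf{p})$ and a bijection of parabolic decompositions. For an elementary transformation $elem^-_{t_k}$ (invertible via $elem^+_{t_k}$), an endomorphism fixing $\textbf{p}$ fixes the center $p_k$, hence preserves the kernel subsheaf $E^-=\{s:s(t_k)\in p_k\}$ and the new direction $p_k^-$, giving ${\rm End}_0(E,\textbf{p})\simeq{\rm End}_0(E^-,\textbf{p}^-)$; and any parabolic decomposition of the target, whose center lies in a single summand, pulls back to one of the source. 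Thus \emph{it suffices to reach $E=E_0$ up to twists and elementary transformations}: the transformed bundle $(E_0,\textbf{p}')$ is again indecomposable and not simple, so ${\rm End}_0(E_0,\textbf{p}')\neq\{0\}$, and Proposition~\ref{case indecomposable}(2) forces every parabolic to lie in the maximal subbundle $\mathcal O_C\hookrightarrow E_0$.

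I would then split on the isomorphism type of $E$. If $E$ is indecomposable with $\det E=\mathcal O_C$, Atiyah's classification \cite{At} gives $E\simeq E_0\otimes\tau$ for a $2$-torsion line bundle $\tau$ (as $\det(E_0\otimes\tau)=\tau^{2}$); twisting by $\tau$ yields $E_0$, carrying maximal subbundle to maximal subbundle, and we are done.

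The decomposable case is the substance of the proof, and I would handle it by induction on instability. After a twist I normalize $E=\mathcal O_C\oplus L$ with $\deg L\geq0$; since $(E,\textbf{p})$ is indecomposable but not simple, Proposition~\ref{case decomposable} gives $\deg L>0$, $L\simeq\mathcal O_C(D_1)$, and a parabolic $p_{k_0}$ lying off \emph{both} summands, in particular off the maximal subbundle $L$. I then perform $elem^-_{t_{k_0}}$ centered at $p_{k_0}$. Because $p_{k_0}\not\subset L$, the subbundle $L$ becomes $L(-t_{k_0})$, and this is the \emph{maximal} subbundle of the new bundle $E^-$: any subbundle of $E^-$ of degree $\deg L$ would be a degree-$\deg L$ subsheaf of $E=\mathcal O_C\oplus L$, hence equal to $L$, which is impossible since $L|_{t_{k_0}}\neq p_{k_0}$ forces $L\not\subset E^-$. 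Thus the maximal subbundle degree drops by exactly one, so the Segre invariant $s:=\deg E-2\max_{N\subset E}\deg N$ (a twist invariant, equal here to $-\deg L<0$) strictly increases by $1$. Re-normalizing by a twist, $(E^-,\textbf{p}^-)$ is still indecomposable and not simple with strictly larger $s$. Iterating, $s$ increases by $1$ at each step while the bundle remains decomposable; by Atiyah's theorem an indecomposable bundle on an elliptic curve is semistable (so $s\geq0$), while a \emph{decomposable} normalized bundle that is indecomposable-not-simple must have $\deg L'>0$, i.e. $s<0$ (if $\deg L'=0$ then by Lemma~\ref{endom}(a) the only traceless endomorphism is the diagonal $\mathrm{diag}(a,-a)$, whose fixing $\textbf{p}$ would make $(E,\textbf{p})$ decomposable). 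Hence the procedure reaches $s=0$ exactly at an \emph{indecomposable} bundle; being of even determinant degree, a further twist makes its determinant trivial, and Atiyah then identifies it with $E_0$ up to one last twist. This returns us to $E=E_0$ and finishes the argument.

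The main obstacle is precisely this decomposable case: one must verify that the elementary transformation at the off-both parabolic genuinely lowers the maximal subbundle degree (rather than re-splitting $E$ into its original type) and that the induction cannot stall at a degree-zero decomposable bundle. Both are resolved by pairing Proposition~\ref{case decomposable}, which supplies the off-both parabolic at every decomposable stage, with the semistability of indecomposable bundles, which confines decomposability to the strictly unstable range $s<0$ and thereby pins the terminal bundle to $s=0$, i.e. to $E_0$.
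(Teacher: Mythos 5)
Your proof is correct, but it routes the decomposable case genuinely differently from the paper. The paper applies Proposition~\ref{case decomposable} once and then performs all $\deg D_1=2k$ negative elementary transformations over $D_1$ in a single batch: since the centers lie off $L\simeq\mathcal O_C(D_1)$, that subbundle transforms into $L(-D_1)\simeq\mathcal O_C$, the parabolics all land on it, and the paper simply asserts that the resulting bundle is $E_0$. You instead transform one point at a time, at a parabolic off both summands, and run an induction on the Segre invariant: each step raises $s$ by exactly $1$, decomposable stages are confined to $s<0$ by Proposition~\ref{case decomposable}, and Atiyah's semistability of indecomposable bundles pins the terminal bundle at $s=0$, necessarily indecomposable; you then obtain the position of the parabolics for free from the preservation of ``indecomposable but not simple'' under twists and elementary transformations together with Proposition~\ref{case indecomposable}(2), rather than tracking them through the transformations. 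What your route buys is precisely the two points the paper leaves implicit: why the end product is indecomposable (hence $E_0$ after twists) and why the process cannot stall at a decomposable bundle; the cost is the induction and the explicit ${\rm End}_0$-preservation lemma, which the paper also uses tacitly. Two cosmetic remarks: in ruling out a large subbundle of $E^-$ you should consider degree $\geq\deg L$ rather than exactly $\deg L$ (the same saturation argument applies, since line subsheaves of $E$ have degree at most $\deg L$); and your parenthetical for the case $\deg L'=0$ cites Lemma~\ref{endom}(a), which excludes $L'\simeq\mathcal O_C$ --- that subcase needs Lemma~\ref{endom}(b) --- but both are subsumed by Proposition~\ref{case decomposable}, which you invoke at every decomposable stage anyway.
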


\proof
The proof follows essentially from Proposition \ref{case decomposable} and Proposition \ref{case indecomposable}. Suppose $E$ is decomposable,  $E=M^{-1}\oplus M$, $\deg M = k \ge 0$. Then $\mathbb PE=\mathbb P(\mathcal O_{C}\oplus M^{2})$. From Proposition \ref{case decomposable}, the support $D$ of parabolics splits as $D=D_0+D_1$ with $\deg(D_1)>0$ and $\mathcal O_{C}(D_{1})=M^{2}$. Parabolics over $D_1$ are outside $M\hookrightarrow E$ (which corresponds to $M^{2}\hookrightarrow\mathcal O_{C}\oplus M^{2})$) and generic. Parabolics over $D_0$ are lying on $M$. After a composition $elem_{D_{1}}$ of $2k$ elementary transformation over $D_{1}$ and twist (to get trivial determinant), we arrive in $E_{0}$ with all the parabolics lying in the maximal subbundle. If $E$ is indecomposable, the conclusion follows from Proposition \ref{case indecomposable}. 
\endproof

\subsection{Moduli space of connections}\label{moduli space}
Let $C$ be a smooth projective curve and $D=t_1+\cdots+t_n$ be a reduced divisor on $C$, $n\ge 1$.  We will fix the data in order to introduce the moduli space of connections. Firstly, let us fix a degree $d$ line bundle $L_{0}$ over $C$.  We also set a local exponent $\nu\in\mathbb C^{2n}$ satisfying the Fuchs relation
\begin{eqnarray*}
 \sum_{k=1}^n (\nu_k^+ + \nu_k^-)+d=0
\end{eqnarray*}
and the generic condition 
$
\nu_1^{\epsilon_1}+\cdots+\nu_n^{\epsilon_n} \notin \mathbb Z 
$
for any  $\epsilon_k \in \{+,- \}$, to avoid reducible connections. Let $\zeta: L_{0} \longrightarrow L_{0}\otimes \Omega^1_C(D)$ be any fixed  rank one logarithmic connection on $L_{0}$ satisfying 
$$
{\rm Res}_{t_k}(\zeta)=\nu_k^+ + \nu_k^-
$$
for all $k=1,...,n$.
We denote by ${\rm Con}^{\nu}(C,D)$ the moduli space of triples $(E,\nabla, \textbf{p})$ where 
\begin{enumerate}
\item $(E, \textbf{p})$ is a rank $2$ quasi-parabolic vector bundle over $(C,D)$ having $L_{0}$ as determinant bundle;
\item $\nabla: E \longrightarrow E\otimes \Omega_C^1(D)$ is a logarithmic connection on $E$ with polar divisor $D$, having $\nu$ as local exponents and ${\rm tr}(\nabla) = \zeta$;
\item two triples $(E,\nabla, \textbf{p})$ and $(E',\nabla',\textbf{p})$ are equivalent when there is an isomorphism between quasi-parabolic bundles $(E,\textbf{p})$ and $(E',\textbf{p}')$ conjugating $\nabla$ and $\nabla'$. 
\end{enumerate}

Actually, in order to obtain a good moduli space we need a stability condition. A tripe $(E,\nabla, \textbf{p})$ is called $\mu$-stable (resp. $\mu$-semistable) if for any $\nabla$-invariant line bundle $L \subset E$, we have 
$$
{\rm Stab}(L) > 0 \;\;\; (resp. \; {\rm Stab}(L) \ge 0)
$$
(see Definition \ref{def stab}). But an invariant line bundle $L$ would force a relation
$$
\nu_1^{\epsilon_1}+\cdots+\nu_n^{\epsilon_n} + \deg(L) = 0 
$$
which is obtained by applying Fuchs relation to the restriction $\nabla|_{L}$. This contradicts our hypothesis on $\nu$. Therefore under generic condition on the local exponent all the connections arising in our moduli space are stable. It follows from \cite[Theorem 3.5]{Ni} that ${\rm Con}^{\nu}(C,D)$ is a quasi-projective variety. 

A priori, ${\rm Con}^{\nu}(C,D)$ depends on the choice of $L_{0}$. But up to twists, we can go into either the even case $L_{0}= \mathcal O_{C}$ or the odd case $L_{0}= \mathcal O_{C}(t)$. In fact, given a rank one logarithmic connection $\eta: L \longrightarrow L\otimes \Omega^1_C(D)$ with local exponents $(k_{1},...,k_{n})$, we can define a twisting map 
\begin{eqnarray*}
\otimes (L, \eta): {\rm Con}^{\nu}(C,D) &\longrightarrow& {\rm Con}^{\nu'}(C,D)\\
               (E,\nabla, \textbf{p}) &\mapsto& (E\otimes L,\nabla\otimes \eta, \textbf{p})
\end{eqnarray*}
where $\nu' = (\nu_{1}^{\pm}+k_{1}, ... , \nu_{n}^{\pm}+k_{n})$. Such map is an isomorphism between moduli spaces, in particular our moduli space only depend on differences $\nu_{k}^{+} - \nu_{k}^{-}$. In the even case, we can assume that $(L_{0}, \zeta)=(\mathcal O_{C}, d)$ where $d$ means the trivial rank one connection. In the odd case, one may suppose $(L_{0}, \zeta)=(\mathcal O_{C}(t), d - \frac{dx}{x-t})$. 

Now let us deal with elementary transformations. When we perform a transformation $elem^{-}_{t_{k}} :(E, \textbf{p}) \dashrightarrow (E^{-}, \textbf{p}^{-})$ the new connection $\nabla^{-}$ on $E^{-}$ has local exponents
$$
(\nu_{k}^{+}, \nu_{k}^{-})' = (\nu_{k}^{-}+1, \nu_{k}^{+})
$$ 
and the other $\nu_{j}$, $j\neq k$, are left unchanged. Finally, we can go from the odd to the even case by performing one negative elementary transformation $elem^{-}_{t_{n}}$.  

In the case we are interested in, $C$ is supposed to be an elliptic curve and $D=t_{1}+t_{2}$. For computation, we can assume $C\subset \mathbb P^2$  is the smooth projective cubic curve
\begin{eqnarray}\label{ellipticbis}
zy^2 = x(x-z)(x-\lambda z)
\end{eqnarray}
with $\lambda \in \mathbb C$, $\lambda \neq 0,1$. And by the above digression one can set  $L=\mathcal O_C(w_{\infty})$, $w_{\infty} = (0:1:0)$. As local exponents, we can take 
\begin{eqnarray}\label{nu fixed2}
(\nu_1^{\pm},\nu_2^{\pm})=\left (\pm \frac{\nu_{1}}{2} - \frac{1}{2}, \pm \frac{\nu_{2}}{2}\right ).
\end{eqnarray}
We note that for each $k\in\{1,2\}$, the condition $\nu_k^{+}=\nu_k^{-}$ is equivalent to $\nu_k=0$.

\bigskip

\noindent\textsc{{\bf Acknowledgements}}.
The first author would like thanks the Institut de Recherche en Math\'ematique de Rennes, IRMAR, for the hospitality and support.

\end{document}